\documentclass{amsart}

\usepackage{amssymb}
\usepackage{amsmath,amsfonts,amsthm,amstext}
\usepackage{amsmath}
\usepackage{mathrsfs}  
\usepackage[all]{xy}
\usepackage{xcolor}
\usepackage{tikz-cd}

\newtheorem{theorem}{Theorem}[section]
\newtheorem{lemma}[theorem]{Lemma}

\newtheorem{prop}[theorem]{Proposition}
\newtheorem{cor}[theorem]{Corollary}

\newcommand{\Z}{\mathbb{Z}}
\newcommand{\z}{\mathbb{Z}}

\newcommand{\X}{{\mathcal X}}
\theoremstyle{definition}

\begin{document}

		\title[]{
		On the residual finiteness of the non-abelian exterior square of wreath products and the Grigorchuk group} \author{Ayrton  Anjos Teixeira, Dessislava H. Kochloukova}
	\address{State University of Campinas (UNICAMP), SP, Brazil \\
		\newline
		email : desi@ime.unicamp.br
		} 
	\email{}
	\date{}
	\keywords{}

\begin{abstract}  We show some sufficient conditions ( in terms of properties of $A$, $B$ and $X$) for the non-abelian exterior square $G \wedge G$ to be residually finite, where $G $ is the wreath product $A \wr B$ or the combinatorial wreath product  $G = A \wr_X B$, in the latter case $B$ is a free group. 
We prove that for the Grigorchuk group $G$  the non-abelian exterior square $G \wedge G$  and the non-abelian tensor product $G \otimes G$ are residually finite. We show conditions that imply that $A \wr B$ is not (cohomologically) good  in dimension $\leq 2$. 
\end{abstract}

\maketitle

\section{Introduction}

 In this paper we study some sufficient conditions that force the non-abelian exterior square $G \wedge G$   to be residually finite where $G$ is a specific  wreath product or the Grigorchuk group. In all cases the original group $G$ is residually finite.   
 The non-abelian tensor product $G \otimes G$ and its quotient $G \wedge G$ are very well studied, starting from the works of
   Dennis,  Brown and Loday  in \cite{D}, \cite{B-L1,B-L2}. A recurrent theme in the study of the non-abelian tensor product is the fact that many classes of groups are closed to applying the non-abelian tensor product. For example 
by results of  Brown, Johnson, Robertson, Ellis and Moravec  if $G$ is in one of the following classes  of groups:  $\pi$-finite ($\pi$ is a set of primes), nilpotent, soluble, polycyclic, finite then  $G\otimes G$  is  in the same class \cite{B-J-R},  \cite{Ellis},  \cite{M}. 
Still a basic property as finite generation is not preserved by the non-abelian tensor product. But if $G$ is a finitely generated group, the group $G \otimes G$  though   not generally finitely generated, embeds in the finitely generated group  $\nu(G)$ defined by Rocco in \cite{Norai}.

The study of residual finiteness of the non-abelian tensor square $G \otimes G$ was started in \cite{AT-BR-K}. One of the main results of \cite{AT-BR-K} is that if $G$ is residually finite and the map $\theta: H_2(G, \mathbb{Z}) \to \varprojlim_U H_2(G/U, \Z) $ is injective (here $\widehat{G} = \varprojlim_U G/U$ with $G/U$ finite)   then the non-abelian tensor square $G \wedge G$ is residually finite  and if furthermore the abelianisation of $G$ does not contain 2-torsion and is finitely generated  then the non-abelian tensor square $G \otimes G$ is residually finite. 
The latter is a corollary of the result of Blyth, Fumagalli and Morigi that under the above restrictions on the abelianization of $G$  the map $G \otimes G \to G \wedge G$ splits.
The map  $\rho: H_2(G, \mathbb{Z}) \to H_2( \widehat{G}, \widehat{\Z})$ factors through 
$ \varprojlim_U H_2(G/U, \Z)$, hence the injectivity of $\rho$ implies the injectivity of $\theta$. 

For  $G = A \wr B$ a description of  $H^2(G, \mathbb{Z})$  was given by Reid in \cite{Re} and a description of $H_2(G, \mathbb{Z})$ was given by Blackburn in \cite{Bl}. We give a homological proof of Blackburn result in Theorem \ref{T1} and get some partial results on the Schur multiplier of the combinatorial wreath product in Theorem \ref{TT1} and Lemma \ref{LL1}.

The first class of groups we study in this paper is the combinatorial wreath product $G = A \wr_X B$ and the particular case of wreath product $A \wr B$.
The classification of the finitely presented combinatorial wreath product $G = A \wr_X B$ was established by Cornulier in \cite{C} and the classification of homological type $FP_m$ and of homotopical  type $F_m$ (under some mild restrictions)  was developed by Bartholdi, Cornulier, Kochloukova in \cite{B-C-K} and Kropholler and Martino in \cite{Kr-Ma}.

Gruenberg proved in \cite[Thm. 3.2]{Gru} that  $G =  A \wr B$ is residually finite precisely when  both $A$ and $B$ are residually finite and either $A$ is abelian or $B$ is finite.  Later this was generalised by Cornulier in \cite[Prop. 3.2]{C2} to the case of general combinatorial wreath product $G = A \wr_X B$. In this case, assuming $A$ is non-trivial,  $G$ is residually finite precisely when both $A$ and $B$ are residually finite and one of the following holds: either  all stabilisers of the action of $B$ on $X$ have finite index in $B$ (if $B$ acts with finitely many orbits on $X$, this is equivalent to $X$ is finite) or $A$ is abelian and every stabilizer $H$ of the action of $B$ on $X$ is closed in the profinite topology of $B$.

 For a $R$-module $V$ we write $V \square_{R}  V$ for $V \otimes_{R} V / \langle a \otimes b + b \otimes a \ | \ a,b \in V \rangle$. We will use this for $R = \Z$ i.e. $V$ is an abelian group.  For $W = \varprojlim_i W_i$ with each $W_i$ a finite abelian group, $W$ is a profinite abelian group  i.e a profinite $\widehat{\Z}$-module, we write $W \widehat{\otimes}_{\widehat{\Z}} W$, $W \widehat{\wedge}_{\widehat{\Z}} W$ and $W \widehat{\square}_{\widehat{\Z}} W$   for $ \varprojlim_i W_i {\otimes}_{\Z} W_i$,  $ \varprojlim_i W_i {\wedge}_{\Z} W_i$ and  $ \varprojlim_i W_i {\square}_{\Z} W_i$ respectively, where $W_i {\otimes}_{\Z} W_i$ and $ W_i {\wedge}_{\Z} W_i$ denote the standard constructions of a tensor and exterior  product over a ring i.e. this corresponds to the more general construction of tensor and exterior product applied  to abelian groups that act trivially on each other. It is easy to see that $W \widehat{\otimes}_{\widehat{\Z}} W$, $W \widehat{\wedge}_{\widehat{\Z}} W$ and $W \widehat{\square}_{\widehat{\Z}} W$ are dependent on $W$ but not on the choice of inverse system $(W_i)$.

\medskip
{\bf  Theorem A} 
 \label{important} 
{\it Suppose that $G = A \wr B$ is a residually finite group with  $A$  an abelian group  such that:

1) the map  $  A \wedge_{\Z} A  \to  \widehat{A} \widehat{\wedge}_{\widehat{\Z}} \widehat{A}   $, induced by the canonical map $A \to \widehat{A}$, is injective;

2)
 $A \otimes_{\Z} A \to \widehat{A} \widehat{\otimes}_{\widehat{\Z}} \widehat{A}$  is injective   if $B$ has an element $b$ of order   $|b| \in  \{ 3,4, \ldots \} \cup \{ \infty \}$  ;

3)  $A \square_{\Z} A \to \widehat{A} \widehat{\square}_{\widehat{\Z}} \widehat{A}$ is injective   if $B$ has an element $b$ of order   $|b| = 2$;

 4) the map  $H_2(B, \mathbb{Z}) \to \varprojlim_i H_2( B_i,{\Z})$ is injective where $\varprojlim_i B_i = \widehat{B}$ with $B_i$ finite.

Then the map $H_2(G, \mathbb{Z}) \to  \varprojlim_i H_2 (A_i \wr B_i, \Z)$ is injective, where $\widehat{G} =\varprojlim_i A_i \wr B_i$, $\widehat{A} =  \varprojlim_i A_i$, $\widehat{B} =  \varprojlim_i B_i$  with $A_i$ and $B_i$ finite groups.   In particular,  $G \wedge G$ is residually finite.

 {\bf Remark} Note that if $ \widehat{G} \simeq \varprojlim_U G/ U$ with $G/ U$ finite then  $\varprojlim_U H_2(G/ U, \Z)$ depends only on $ \widehat{G}$ i.e. if $\widehat{G} \simeq \varprojlim_V G/ V$ with $G/ V$ finite then   $\varprojlim_V H_2(G/ V, \Z) \simeq \varprojlim_{V, U} H_2(G/ U \cap V, \Z) \simeq \varprojlim_U H_2(G/ U, \Z) $. 
}
\medskip

In Theorem \ref{oldThmB} we prove a version of Theorem A in the case $A$ is not abelian but $B$ is finite.
Theorem A is proved in Chapter \ref{ChapterResFinWr}.  

  The non-abelian  $q$-exterior square $G \wedge^q G$ was defined   by Conduche and Rodr\'iguez-Fern\'andez  in  \cite{C-R} for an integer $q \geq 1$. 
In Theorem \ref{q-wreath-abelian} and Theorem \ref{q-section-finite} in section \ref{ChapterResFinWr} we get  conditions  for $G \wedge^q G$ to be residually finite for $G = A \wr B$  a residually finite group and $q \geq 2$.

 Theorem B is proved in Section \ref{ChapterFree}.  We use homological methods to show that  the map $H_2(G, \Z) \to H_2( \widehat{G}, \widehat{\Z})$ is injective and then apply 
the main result of \cite{AT-BR-K} stated in the introduction as  Theorem \ref{A-L-Da}.  To prove Theorem B, b) we use homological methods that reduce the problem to  Proposition \ref{iff25}. 

\medskip
{\bf Theorem B} {\it Let $G =  A \wr_X B$ be residually finite, where $B$ is a finitely generated free group, $A$ is a finitely generated  free  abelian group, $B$ acts on $X$ with finitely many orbits  and each stabiliser $stab_B(x)$ is finitely generated for $x \in X$.  Then 

a) the map $H_2(G, \Z) \to H_2( \widehat{G}, \widehat{\Z})$ is injective. In particular $G \wedge G$ is residually finite

b) for every integer $ q \geq 2$  the map $H_2(G, \Z/ q \Z) \to H_2( \widehat{G}, {\Z}/ q \Z)$   is  injective. In particular $G \wedge^q G$ is residually finite.}

\medskip 
The proof of Theorem B uses the description of the Schur multiplier of $G =  A \wr_X B$ given in Lemma \ref{discrete-homology}. In Section \ref{homCom} we
continue the study of the Schur multiplier of $G =  A \wr_X B$ and believe Theorem \ref{TT1} is of independent interest though it is no applicable to finite groups, hence do not help to understand whether $G \wedge G$ is residually finite.

Next we consider the Grigorchuk group $G$,   which is the first known group  of intermediate growth  \cite{Grig}. The Grigorchuk group is a self-similar group,in particular it is residually finite. Since every element in the Grigorchuk group has  order a power of 2, it is residually $2$-finite.   Theorem C  follows from Corollary \ref{GG1}, Proposition \ref{GG20} and Proposition \ref{GG30}. An important ingredient of the proof of Corollary \ref{GG1} is   Benli's  calculation from \cite{Benli}  of $H_2( G_n, \Z)$ for  some finite quotients $G_n$ of $G$ that determine the profinite topology on $G$.

\medskip
{\bf Theorem C} {\it Let  $G$ be the Grigorchuk group. Then

a)  the map $H_2(G, \Z) \to \varprojlim_n H_2(G_n, {\Z})$  is injective, where  the profinite completion  $ \widehat{G} \simeq \varprojlim_n G_n$  is a pro-$2$ group with $G_n$ 2-finite quotients of $G$.
In particular  the non-abelian exterior square $G \wedge G$ is residually 2-finite.

 b) Nevertheless $G^{ab}$ is a 2-group, $G \otimes G \simeq \Delta(G) \times (G \wedge G)$. In particular  $G \otimes G$ is residually  2-finite.
 
  c) For every integer $q \geq 2$ the map  $H_2(G, \Z/ q \Z) \to H_2(\widehat{G}, {\Z}/ q \Z)$ is injective.
In particular  the non-abelian $q$-exterior square $G \wedge^q G$ is residually finite.}

\medskip
Finally we consider the class of groups $\mathcal{A}_2$,  i.e. (cohomologically) good in dimension $\leq 2$, defined by Serre in \cite{Serre}. The importance of the class $\mathcal{A}_2$ lies in the fact that every extension of a  finitely generated residually finite group by a residually finite group from the class $\mathcal{A}_2$ is again residually finite.

In \cite[Prop. 1, Section 2]{Er} Erschler showed that there exist (infinitely) many finite-by-Grigorchuk groups that are not residually finite, in particular the Grigorchuk group $G$  is not in $\mathcal{A}_2$. 
In \cite[Prop. 2, Section 3]{Er} Erschler  constructs a  non-residually finite group denoted by $G_{Hall}^2$, we denote by $\Gamma$, such that its center $C(\Gamma) \simeq (\Z/ 2 \Z)^{\infty}$ and $\Gamma/ C(\Gamma) \simeq \Z \wr \Z$. This easily implies that $\Z \wr \Z \not\in {\mathcal{A}}_2$. Indeed if $\Z \wr \Z \in {\mathcal{A}}_2$ then for every  subgroup $G_i$   of finite index in $C(\Gamma)$ we have that $\Gamma/ G_i$ is residually finite, hence there is a subgroup $\Gamma_i$ of finite index in $\Gamma$  such that $\Gamma_i \cap C(\Gamma) \subseteq G_i$. By choosing $G_i$ such that $\cap_i G_i = 1$ we get a contradiction with $\Gamma$ is not residually finite.

 In \cite{BR-K} Barroso Rocha and Kochloukova proved that the cohomological and homological version of goodness coincide. 
In the proof of the following theorem we show that many wreath products are not homologically good in dimension 2, hence are not in the class ${\mathcal{A}}_2$.

\medskip
{\bf Theorem D} {\it  Suppose that $B$ is an infinite,  residually finite group, $G = A \wr B$ is countable, residually finite  and  $A$  is  non-trivial.   Then  one  (or both) of the following conditions holds:

a) $A \wr B \not\in {\mathcal{A}}_2$,

b)  $dim_{\mathbb{F}_p}  A  \otimes_{\Z} \mathbb{F}_p \leq 1$ for every prime $p> 2$, $A = 2A$ and $B$ has only finitely many elements of  order different from two.
}

\medskip
{\bf Acknowledgements}   Anjos Teixeira is supported by a PhD grant process No. 168757/2023-0, Conselho Nacional de Desenvolvimento Científico e Tecnológico - CNPq  and Kochloukova is partially supported by bolsa de produtividade em pesquisa CNPq 301222/2026-6 and FAPESP 2024/14914-9. 

\section{Preliminaries}

 \subsection{Structure of $G \otimes G$}

We quote some  general results on  the non-abelian tensor square $G \otimes G$ and the non-abelian exterior square $G \wedge G$.

In \cite{Norai}  Rocco defined the group $\nu(G)$ by the following presentation
$$\nu(G) = \langle G, {G}^{\varphi} \ | \ [g_1, {g}_2^{\varphi}]^{g_3} = [g_1, {g}_2^{\varphi}]^{{g}_3^{\varphi}} = [g_1^{g_3}, (g_2^{g_3})^{\varphi}] \hbox{ for } g_1, g_2, g_3 \in G \rangle$$
In the above presentation of $\nu(G)$ in terms of generators and relations $G^{\varphi}$ is an isomorphic copy of $G$  with an isomorphism $G \to G^{\varphi}$ that sends $g$ to $g^{\varphi}$.
The subgroup $[G, {G}^{\varphi}] \leq \nu(G)$ is isomorphic to the non-abelian tensor product $G \otimes G$.

 \begin{theorem} \cite[Thm. 2]{B-F-M} \label{wedge}  Let $G$ be a group and $F$ be a free group such that $G \simeq F/ R$ for some normal subgroup $R$ of $F$. Then $G \wedge G \simeq F'/ [F, R]$.
 \end{theorem}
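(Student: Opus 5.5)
We aim to show that $G \wedge G \simeq F'/[F,R]$ by comparing two groups that both sit naturally inside a Rocco-type construction. The main tool is the universal description of $G\wedge G$: it is generated by symbols $g\wedge h$ ($g,h\in G$) subject to the crossed-pairing relations
$$gg'\wedge h = ({}^{g}g'\wedge {}^{g}h)(g\wedge h),\qquad g\wedge hh' = (g\wedge h)({}^{h}g\wedge {}^{h}h'),\qquad g\wedge g = 1.$$
Equivalently, $G\wedge G$ is the quotient of $[G,G^{\varphi}]\le \nu(G)$ by the normal subgroup generated by the elements $[g,g^{\varphi}]$.

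First, we construct a homomorphism $\Phi: G\wedge G\to F'/[F,R]$ by setting $g\wedge h \mapsto [\tilde g,\tilde h][F,R]$, where $\tilde g,\tilde h\in F$ are arbitrary lifts. This is well defined: changing a lift by an element $r\in R$ changes the commutator by a product of conjugates of $[r,\cdot]$, and these lie in $[F,R]$. It is also standard that $R/[F,R]$ is central in $F/[F,R]$. The defining relations then become the commutator identities $[xy,z]=[y,z]^{x^{-1}}\cdots$ (with the paper's convention for conjugation), together with $[x,x]=1$, so $\Phi$ respects them. Moreover, $\Phi$ is surjective, since $F'$ is generated by commutators of elements of $F$.

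Second, we construct the inverse map. Consider the commutator map $\kappa: G\wedge G\to G$, given by $g\wedge h\mapsto [g,h]$, whose kernel is $H_2(G,\Z)$ and whose image is $G'$. Comparing with the extension
$$1\to (R\cap F')/[F,R]\to F'/[F,R]\to G'\to 1$$
and using Hopf's formula $H_2(G,\Z)\simeq (R\cap F')/[F,R]$, we obtain a commutative diagram of two central extensions of $G'$. In this diagram $\Phi$ commutes with the projections to $G'$. It therefore suffices to show that $\Phi$ restricts to an isomorphism $\ker\kappa\to (R\cap F')/[F,R]$, after which the five lemma gives the result.

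To carry out this comparison concretely, we define $\Psi: F'\to G\wedge G$ on the free group side. The map $F\times F\to G\wedge G$, $(x,y)\mapsto \bar x\wedge\bar y$, is a crossed pairing for the conjugation action of $F$ on itself, so it induces a homomorphism $F\wedge F\to G\wedge G$. Since $F$ is free, $H_2(F)=0$, so $\kappa_F: F\wedge F\to F'$ is an isomorphism. We therefore set $\Psi = (\text{induced map})\circ\kappa_F^{-1}$. Next we check that $[F,R]\subseteq\ker\Psi$: for $r\in R$ we have $\bar r=1$, so $\Psi([x,r]) = \bar x\wedge 1 = 1$. Hence $\Psi$ descends to $F'/[F,R]$. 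Finally, $\Psi\Phi$ and $\Phi\Psi$ are the identity on generators, so the two maps are mutually inverse.

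The main obstacle is the identification $F\wedge F\simeq F'$ for $F$ free, i.e.\ the injectivity of the commutator map, which amounts to $\ker\kappa_F = H_2(F,\Z)=0$. This is where the exact sequence $0\to H_2(G)\to G\wedge G\to G'\to 1$ (Brown–Loday) is needed. If we want to avoid citing it, the approach would be to construct the inverse $F'\to F\wedge F$ directly using Rocco's $\nu(F)$ and the freeness of $F$.
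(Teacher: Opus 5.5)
The paper gives no proof of this statement; it quotes it from Blyth--Fumagalli--Morigi, and the result goes back to Miller and Brown--Loday. So your proposal is a different route in the sense that it supplies an actual argument, and that argument is correct:
\begin{itemize}
\item $\Phi(g\wedge h)=[\tilde g,\tilde h][F,R]$ is well defined and surjective.
\item The functorial map $F\wedge F\to G\wedge G$ composed with $\kappa_F^{-1}$ kills the generators $[x,r]$ of $[F,R]$, since $g\wedge 1=1$ in $G\wedge G$.
\item $\bar\Psi\Phi$ and $\Phi\bar\Psi$ are the identity on generating sets.
\end{itemize}
What this gains over the citation is an explicit isomorphism $g\wedge h\mapsto[\tilde g,\tilde h][F,R]$, natural in the presentation. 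The paragraph with the five lemma is superfluous, and the restriction claim announced there is never checked. Constructing $\bar\Psi$ alone already finishes the proof.

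There are two caveats.

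First, all of the substance sits in one input: that $\kappa_F\colon F\wedge F\to F'$ is injective for $F$ free. Your argument is a clean reduction of the theorem to the case $R=1$, in the form ``the natural map is an isomorphism''. Note that an abstract isomorphism $F\wedge F\simeq F'$ would not suffice here, since $F'$ is non-Hopfian. The exact sequence $0\to H_2(G,\Z)\to G\wedge G\to G'\to 1$ is frequently \emph{derived} from $G\wedge G\simeq F'/[F,R]$ together with Hopf's formula. Quoting it is therefore non-circular only if you take it from an independent proof, such as Brown--Loday's topological argument or Miller's original one. Your closing suggestion to build $F'\to F\wedge F$ from $\nu(F)$ is not yet an argument.

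Second, make the commutator convention explicit. Your crossed-pairing relations, and the identity $[xy,z]=[y,z]^{x^{-1}}[x,z]$ you invoke, require $[x,y]=xyx^{-1}y^{-1}$. The paper's convention is $[x,y]=x^{-1}y^{-1}xy$, which goes with the right-action relations $gg'\wedge h=(g^{g'}\wedge h^{g'})(g'\wedge h)$ and $g\wedge hh'=(g\wedge h')(g^{h'}\wedge h^{h'})$. This point is cosmetic, but it should be stated.
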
 
 
 \begin{theorem} \cite[Thm. 1]{B-F-M} \label{2tor}
 Let $G$ be a group such that $G/ G'$ is finitely generated. If $G/ G'$ has no
elements of order 2, or if $G'$ has a complement in $G$, then $G \otimes G \simeq \Delta(G) \times (G \wedge G)$.
 \end{theorem}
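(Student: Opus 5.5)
The plan is to start from the standard central extension
$$1 \to \Delta(G) \to G\otimes G \to G\wedge G \to 1,$$
where $\Delta(G)=\langle g\otimes g \mid g\in G\rangle$ is central in $G\otimes G$ (this follows from the defining relations, realised inside $\nu(G)$ as the subgroup generated by the $[g,g^{\varphi}]$). Since $\Delta(G)$ is central, the product decomposition $G\otimes G\simeq \Delta(G)\times(G\wedge G)$ follows once we produce a homomorphism $r\colon G\otimes G\to \Delta(G)$ that restricts to the identity on $\Delta(G)$. So everything reduces to building such a retraction through the abelianisation $A:=G/G'$.

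Let $\pi\colon G\otimes G\to A\otimes_{\Z}A$ be induced by $G\to A$. It sends $g\otimes g$ to $\bar g\otimes\bar g$, so $\pi(\Delta(G))=S:=\langle a\otimes a\mid a\in A\rangle$. Whitehead's quadratic functor gives a surjection $\psi\colon\Gamma(A)\to\Delta(G)$, $\gamma(\bar g)\mapsto g\otimes g$. It is well defined because $g\otimes g$ depends only on $\bar g$ and satisfies the quadratic relations; I will check this from the standard identities in $G\otimes G$. The composite $\pi\circ\psi\colon\Gamma(A)\to A\otimes A$ is the canonical map $\gamma(a)\mapsto a\otimes a$, whose image is $S$.

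Suppose $A$ is finitely generated without $2$-torsion, and write $A=\bigoplus_i\langle e_i\rangle$ with each $e_i$ of infinite or odd order. Then:
\begin{enumerate}
\item $\Gamma(A)\to A\otimes A$ is injective. Indeed $\Gamma(A)$ has generators $\gamma(e_i)$, of order equal to that of $e_i$ (the odd order is what kills the factor $2$ that appears in $\Gamma(\Z/2^k)$), together with $\gamma(e_i+e_j)-\gamma(e_i)-\gamma(e_j)$ for $i<j$, of order $\gcd(|e_i|,|e_j|)$. These map to $e_i\otimes e_i$ and $e_i\otimes e_j+e_j\otimes e_i$, and a basis comparison with $A\otimes A=\bigoplus_{i,j}\langle e_i\otimes e_j\rangle$ shows no collapse.
\item $S$ is a direct summand of $A\otimes A$, with complement $\bigoplus_{i<j}\langle e_i\otimes e_j\rangle$. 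The orders on the two sides agree, so this is an honest direct decomposition.
\end{enumerate}
Consequently $\pi|_{\Delta(G)}$ is injective, because $\pi\psi$ is injective and $\psi$ is onto. Let $p\colon A\otimes A\to S$ be the projection. Then $r=(\pi|_{\Delta(G)})^{-1}\circ p\circ\pi$ is the desired retraction. The main point to verify carefully is (1): the order computations in $\Gamma(A)$ for cyclic summands, which is exactly where odd order is essential.

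Now suppose instead that $G'$ has a complement $Q$, so $G=G'\rtimes Q$ with $Q\simeq A$ abelian. The maps $Q\hookrightarrow G\twoheadrightarrow Q$ make $Q\otimes Q=A\otimes_{\Z}A$ a retract of $G\otimes G$ by functoriality; the composite is again $\pi$. Every $g\otimes g$ is congruent modulo the image of terms involving $G'$ to elements $q\otimes q$ with $q\in Q$, so $\Delta(G)$ is the image of $\Delta(Q)=S$. Moreover $\pi$ maps this image isomorphically back onto $S$, since $Q\otimes Q\to G\otimes G\to Q\otimes Q$ is the identity. So it again suffices that $S$ is a direct summand of $A\otimes A$, now allowing $2$-torsion. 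Here I use that $(A\otimes A)/S=A\wedge A$ is free on the classes $e_i\wedge e_j$ ($i<j$) with orders $\gcd(|e_i|,|e_j|)$, and that these lift to elements $e_i\otimes e_j$ of the same order. Hence the sequence $0\to S\to A\otimes A\to A\wedge A\to 0$ splits, giving the projection $p$. Composing $p$ with $\pi$ and with the inverse of the isomorphism $\Delta(G)\to S$ gives the retraction $r$, and the central extension splits as a direct product.
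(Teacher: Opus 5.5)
Your argument is correct and follows the route the paper indicates for this cited result of Blyth--Fumagalli--Morigi. You identify $\Delta(G)$ with $\Delta(G^{ab})=S$ via $\Gamma(G^{ab})\to\Delta(G)\to G^{ab}\otimes_{\Z}G^{ab}$, where injectivity of $\Gamma(G^{ab})\to G^{ab}\otimes_{\Z}G^{ab}$ is exactly where the absence of $2$-torsion enters, and then split the central extension using that $S$ is a direct summand of $G^{ab}\otimes_{\Z}G^{ab}$. In the complement case, replace the vague ``congruent modulo terms involving $G'$'' by the exact identity $g\otimes g=q\otimes q$ for $g\in qG'$, which is just the well-definedness of $\psi$ that you already invoked.
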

 
 We observe that in \cite{B-F-M} the authors use the notation $\nabla(G)$ for $\Delta(G)$.
 The condition on the absense of 2-torsion in $G/ G'$  in the statement of Theorem \ref{2tor} comes from the fact that in this case the map $\Gamma(G/G') \to \Delta(G/ G')$ is injective, where $\Gamma(G/ G')$ is the Whitehead quadratic functor, and this implies an isomorphism $\Delta(G/ G') \simeq \Delta(G)$.
 
 \subsection{Homology of discrete and profinite groups} Let $G$ be a discrete group. Then its profinite completion $\widehat{G}$ is defined as the inverse limit of its finite quotients  i.e.  $\widehat{G} \simeq   \varprojlim_U G / U$ where the inverse limit is over all normal subgroups $U$ of  finite index in $G$.  The completed group algebra  $$\widehat{\Z}[[\widehat{G}]] \simeq \varprojlim_{U,i}
\frac{\Z} {i \Z} [G/U]$$ where $\frac{\Z} {i \Z}[G/U]$ is the group algebra of $G/U$ with coefficients in $\Z/ i \Z$,  the inverse limit is over all finite subgroups $U$ of finite index in $G$ and all integers  $i \geq 2$.

If $B$ is a profinite $\widehat{\mathbb{Z}}$-module (= profinite abelian group)  i.e. $B =  \varprojlim_W (B/ W)$ where the inverse limit is over all subgroups $W$ of  finite index in the abelian group  $B$  
then by definition the completed (abelian) exterior product $$B \widehat{\wedge}_{\widehat{\Z}} B \simeq  \varprojlim_{ W} (B/ W) \wedge_{\mathbb{Z}} (B/W)$$ and the completed (abelian) tensor product $$B \widehat{\otimes}_{\widehat{\Z}} B \simeq  \varprojlim_{W} (B/ W) \otimes_{\mathbb{Z}} (B/W),$$ where we use $ \otimes_S$ and $ \wedge_S$ for the (abelian) tensor product and the (abelian) exterior product of $S$-modules.

More generally if $R$ is a profinite ring, $R = \varprojlim_{i,j} R_{i,j}$, where $R_{i,j}$ are finite quotients rings of $R$, $A$ is a right profinite $R$-module and $B$ is a profinite left $R$-module, such that $$A = \varprojlim_i A_i,  B = \varprojlim_j B_j$$ with $A_i, B_j$ finite $R$-module quotients of $A$ and $B$  such that $Ker(R \to R_{i,j})$ acts trivially on $A_i$ and $B_j$ then the completed (abelian) tensor product $$A \widehat{\otimes}_{R} B =  \varprojlim_{i,j} A_i \otimes_{R_{i,j}} B_j$$

We use $H_i(G, - )$ and $H_i(\widehat{G}, - )$ for homology of abstract and profinite groups, the later sometimes is called continuous homology. For basic properties we refer the reader to 
\cite{Rotman} and \cite{R-Z}. If $\widehat{G} = \varprojlim_U G/ U$  with $G/ U$ finite,  $B$ is  profinite $\widehat{G}$-module with  $B =  \varprojlim_W (B/ W)$ where each $B/ W$ is a finite $\widehat{G}$-module, hence for some normal subgroup  $U$ of finite index in $G$ we have that $U$ acts trivially on $B/ W$, we have
$$H_i(\widehat{G}, B ) \simeq  \varprojlim_{U, W} H_i(G/U, B/ W)$$

By \cite{Brown} if $G$ is an abelian (discrete) group then $H_2(G, \Z) \simeq G \wedge_{\Z} G$.

\begin{lemma} \cite{AT-BR-K} \label{wedge2}
Suppose $B$ is an abelian torsion-free profinite group  and $q \geq 2$ is an integer. Then 
 
 a) $H_2(B, \widehat{\Z}) \simeq B \widehat{\wedge}_{\widehat{\Z}}  B$,
 
 b)  $H_2(B, {\Z}/ q \Z) \simeq (B/ q B)  \widehat{\wedge}_{\widehat{\Z}} (B/ q B)$.
 \end{lemma}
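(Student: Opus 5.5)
We have to prove Lemma \ref{wedge2}: for $B$ a torsion-free abelian profinite group, $H_2(B,\widehat{\Z})\simeq B\widehat{\wedge}_{\widehat{\Z}}B$ and $H_2(B,\Z/q\Z)\simeq (B/qB)\widehat{\wedge}_{\widehat{\Z}}(B/qB)$. The plan is to reduce to finite abelian quotients, where the discrete formula $H_2(A,\Z)\simeq A\wedge_\Z A$ is available, and then pass to the inverse limit. Write $B=\varprojlim_W B/W$ over open subgroups $W$. By the formula for continuous homology recalled above,
\[ H_2(B,\widehat{\Z})\simeq \varprojlim_{W,i} H_2(B/W,\Z/i\Z). \]
The argument then has two steps: compute each term via the Künneth formula, and show that the Tor contributions disappear in the limit because $B$ is torsion-free.

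\emph{Step 1.} For a finite abelian group $A$, the universal coefficient theorem gives
\[ 0\to H_2(A,\Z)\otimes\Z/i\Z\to H_2(A,\Z/i\Z)\to \mathrm{Tor}(H_1(A,\Z),\Z/i\Z)\to 0. \]
Since $H_2(A,\Z)\simeq A\wedge_\Z A$, the first term is $(A\wedge A)\otimes \Z/i\Z\simeq (A/iA)\wedge(A/iA)$, and the third is $A[i]$, the $i$-torsion of $A$.

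\emph{Step 2.} Choose a cofinal subsystem of pairs $(W,i)$. We compare the system indexed by $(W,i)$ with the one indexed by $(W',j)$, where $W'\subseteq W$ and $i\mid j$. It suffices to show that the Tor parts form a Mittag-Leffler-trivial system, i.e. that for each $(W,i)$ there is $(W',j)$ whose transition map kills the image of $(B/W')[j]$ in $(B/W)[i]$. Concretely, take $x\in B$ with $jx\in W'$. Because $B$ is torsion-free profinite, hence a product of copies of $\Z_p$'s, we can choose $W'\subseteq iW$ and $j=i$. Then $ix\in iW$ gives $x\in W+B[i]=W$, so the image is zero.

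Two points need care here. The connecting maps of the universal coefficient sequence on the Tor part are induced by multiplication by $j/i$ together with the quotient map. So rather than $j=i$, I would take $W'=i^2W$ and $j=i^2$, or check the naturality directly on the chosen splitting. Also, the UCT sequence is natural but its splitting is not; for this reason I will only use the exactness and the vanishing of the limit of the Tor terms, where lim$^1$ issues do not arise because everything is finite. Passing to the limit then gives $H_2(B,\widehat{\Z})\simeq\varprojlim (B/(W+iB))\wedge(B/(W+iB))$. The groups $W+iB$ are cofinal among the open subgroups, so this limit is $B\widehat{\wedge}_{\widehat\Z}B$, which proves a).

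For b), do the same with $i=q$ fixed. The Tor term $(B/W)[q]$ again dies in the limit by the torsion-freeness argument, and the remaining term $(B/(W+qB))\wedge(B/(W+qB))$ has limit $(B/qB)\widehat\wedge(B/qB)$.

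The main obstacle is the vanishing argument of Step 2, namely verifying precisely that the transition maps annihilate the Tor parts. This uses torsion-freeness through the decomposition $B\simeq\prod_p\Z_p^{I_p}$.
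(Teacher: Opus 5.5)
\textbf{Gap in Step 2.} Your overall strategy is sound: apply the universal coefficient sequence on the finite quotients $B/W$, use exactness of $\varprojlim$ on inverse systems of finite groups, and show that the Tor system $\{(B/W)[i]\}$ is pro-zero. The step that fails is the vanishing argument. You claim that torsion-freeness lets you choose an open $W'\subseteq iW$ (or $W'=i^2W$). The subgroup $iW$ is closed, but it is open only if $W/iW$ is finite, and this forces $B/iB$ to be finite. For $B=\prod_{\mathbb N}\Z_p$ and $p\mid i$, the quotient $B/iB$ is infinite, so $iW$ contains no open subgroup and neither of your choices of $W'$ is an admissible index. This is not an edge case. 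In Lemma \ref{profinite-homology} the statement is applied to $\overline{M}=\widehat{A}[[\widehat{X}]]$ with $X$ infinite, which has exactly this form. As written, Step 2 only proves the lemma when $B/iB$ (respectively $B/qB$) is finite.

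\textbf{How to repair it.} The transition map only needs to send $\{x\in B : jx\in W'\}$ into $W$; you do not need $W'\subseteq iW$.
\begin{itemize}
\item For b), $j=q$ is fixed and the Tor map is $x\mapsto x$. The sets $S_{W'}=\{x\in B: qx\in W'\}$ are open subgroups forming a directed family. Their intersection is $B[q]=0$ by torsion-freeness. Since $B\setminus W$ is compact, a single $S_{W'}$ already lies in $W$, which is the pro-zero property. Under torsion-freeness this is equivalent to $W'\cap qB\subseteq qW$, a weaker condition than the one you asked for. This is the only place torsion-freeness is really needed; for $B=\Z/q$ part b) is false.
\item For a) you may enlarge $j$, and then torsion-freeness is not needed at all. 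Let $e$ be the exponent of $B/W$ and take $j=ie$, $W'=W$. The Tor transition map is $x\mapsto (j/i)x=ex\in eB\subseteq W$, hence zero.
\end{itemize}
Your remark that the factor $j/i$ pushes you away from $j=i$ is backwards: for $j=i$ the factor is $1$, and a larger $j$ only helps.

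With these replacements the rest of your argument goes through. This includes the exactness of $\varprojlim$, the cofinality of the subgroups $W+iB$, and the identification of the limit over $W+qB$ with $(B/qB)\widehat{\wedge}_{\widehat{\Z}}(B/qB)$.
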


\subsection{Criterions for residual finiteness} 

 Let $G$ be a discrete group. Recall that $\widehat{G}$ denotes the profinite completion of $G$ and $\widehat{G}_p$ the pro-$p$ completion of $G$. We state several results from \cite{AT-BR-K} that we will need later on.

\medskip \begin{theorem} \cite[Theorem A, a) ]{AT-BR-K} \label{A-L-Da} Let $G$ be a discrete group.

 1)  Suppose that $G$ is residually finite and the  map $$H_2(G, \mathbb{Z}) \to H_2(\widehat{G}, \widehat{\mathbb{Z}}),$$ induced by the canonical map  $G \to \widehat{G}$, is injective. Then $G \wedge G$ is residually finite.
 
 2)  Suppose that $G$ is residually finite and the  map $$H_2(G, \Z) \to \varprojlim_U H_2(G/U, \Z),$$ induced by the canonical projections $G \to G/ U$ (for $ \widehat{G} \simeq \varprojlim G/U$, $G/ U$ finite groups) is injective. Then $G \wedge G$ is residually finite.

\end{theorem}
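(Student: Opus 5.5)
The plan is to prove 2) directly from the functoriality of the exterior square and the natural exact sequence relating $G \wedge G$, $H_2(G,\Z)$ and $G'$. Then 1) will follow from 2) by the factorisation of $\rho$ already noted in the introduction.

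\emph{Part 2).} Write $G \simeq F/R$ with $F$ free. By Theorem \ref{wedge}, $G \wedge G \simeq F'/[F,R]$, and Hopf's formula gives $H_2(G,\Z) \simeq (R\cap F')/[F,R]$. This yields the natural central extension
$$1 \to H_2(G,\Z) \to G \wedge G \xrightarrow{\ \kappa\ } G' \to 1,$$
where $\kappa(g\wedge h) = [g,h]$. For every normal subgroup $U$ of finite index in $G$, the projection $\pi_U : G \to G/U$ induces a homomorphism $\pi_U \wedge \pi_U : G \wedge G \to (G/U)\wedge(G/U)$. The target is finite, since by the results of Ellis quoted in the introduction the non-abelian tensor square, and hence its quotient the exterior square, of a finite group is finite.

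Naturality of the above sequence gives a commutative diagram. Its left vertical map is the induced map $H_2(G,\Z) \to H_2(G/U,\Z)$, and its right vertical map is the restriction $G' \to (G/U)'$ of $\pi_U$. To see this concretely, take a free presentation $G/U \simeq F/R_U$ with $R \subseteq R_U$. Then everything is induced by the identity of $F$, namely $F'/[F,R] \to F'/[F,R_U]$.

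Now let $1 \ne x \in G \wedge G$. We consider two cases.

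\begin{itemize}
\item If $\kappa(x) \ne 1$, then $\kappa(x)$ is a nontrivial element of $G' \le G$. Since $G$ is residually finite, there is a $U$ with $\pi_U(\kappa(x)) \ne 1$. By commutativity of the diagram, $\kappa_U\big((\pi_U\wedge\pi_U)(x)\big) \ne 1$, so $(\pi_U\wedge\pi_U)(x)\ne 1$.
\item If $\kappa(x)=1$, then $x \in H_2(G,\Z)$. Injectivity of $H_2(G,\Z)\to \varprojlim_U H_2(G/U,\Z)$ gives a $U$ such that the image of $x$ in $H_2(G/U,\Z)$ is nontrivial. Since $H_2(G/U,\Z)$ embeds in $(G/U)\wedge(G/U)$ compatibly with the diagram, again $(\pi_U\wedge\pi_U)(x) \ne 1$.
\end{itemize}

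In both cases $x$ survives in a finite quotient of $G\wedge G$, so $G\wedge G$ is residually finite.

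\emph{Part 1).} We have $H_2(\widehat G,\widehat\Z) \simeq \varprojlim_{U,i} H_2(G/U,\Z/i\Z)$. The map $\rho$ is obtained by composing $\theta$ with the limit of the coefficient-reduction maps $H_2(G/U,\Z) \to H_2(G/U,\Z/i\Z)$. Hence $\rho$ factors as
$$H_2(G,\Z) \xrightarrow{\ \theta\ } \varprojlim_U H_2(G/U,\Z) \to H_2(\widehat G,\widehat\Z).$$
Injectivity of $\rho$ therefore forces injectivity of $\theta$, and 1) reduces to 2).

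The main point requiring care is the naturality check: that the identification of $H_2(G/U,\Z)$ inside $(G/U)\wedge(G/U)$ is compatible with the one for $G$. Working with the compatible free presentations $F/R \to F/R_U$ makes this transparent. The finiteness of $(G/U)\wedge(G/U)$ is the other needed input, and it is taken from the literature cited in the introduction.
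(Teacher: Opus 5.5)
Your proof is correct. The paper does not prove this statement itself; it only quotes it from the earlier work of Anjos Teixeira, Barroso Rocha and Kochloukova. Your reduction of 1) to 2), by factoring $\rho$ through $\theta\colon H_2(G,\Z)\to\varprojlim_U H_2(G/U,\Z)$, is exactly the remark made in the paper's introduction, and your proof of 2) --- separating points of $G\wedge G\simeq F'/[F,R]$ via the natural central extension $H_2(G,\Z)\hookrightarrow G\wedge G\twoheadrightarrow G'$ and compatible presentations $F/R\to F/R_U$ --- is the standard argument behind that result. As a minor simplification, finiteness of $(G/U)\wedge(G/U)$ already follows from the same extension, since $H_2(G/U,\Z)$ and $(G/U)'$ are finite, so the appeal to Ellis is not needed.
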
 
 \begin{theorem} \cite[Theorem A, c1), c2) ]{AT-BR-K} \label{A-L-Dc} Let $G$ be a discrete group   and $p$ be a prime integer.

      1) Suppose that $G$ is residually $p$-finite and the map $$H_2(G, \Z) \to \varprojlim_U H_2(G/U, \Z),$$ induced by the canonical projections $G \to G/ U$ (for $ \widehat{G}_p \simeq \varprojlim G/U$, $G/ U$ finite $p$-groups) is injective then $G \wedge G$ is residually $p$-finite.
 
 2)  Suppose that $G$ is residually $p$-finite and    the canonical map $$H_2(G, \mathbb{Z}) \to H_2(\widehat{G}_p, {\mathbb{Z}}_p),$$ induced by the canonical map  $G \to \widehat{G}_p$, is injective then $G \wedge G$ is residually $p$-finite.
 \end{theorem}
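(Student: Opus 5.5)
The plan is to use the Hopf-type description of $G \wedge G$ from Theorem \ref{wedge} to get a central extension whose kernel is the Schur multiplier, and then to separate elements of $G\wedge G$ in two cases: those detected by the commutator map to $G'$, and those lying in $H_2(G,\Z)$. Part 2) will then be reduced to part 1).

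Write $G = F/R$ with $F$ free. By Theorem \ref{wedge}, $G \wedge G \simeq F'/[F,R]$. The commutator map $\kappa: G\wedge G \to G'$ corresponds to the projection $F'/[F,R] \to F'R/R = G'$. Its kernel is $(R\cap F')/[F,R] \simeq H_2(G,\Z)$ by Hopf's formula, so we have a central extension
$$1 \to H_2(G,\Z) \to G\wedge G \to G' \to 1.$$
This construction is natural in $G$. Hence for every normal subgroup $U$ of $G$ with $G/U$ a finite $p$-group, the projection $\pi_U: G \to G/U$ induces a commutative diagram of such extensions for $G$ and for $G/U$. In that diagram the left vertical arrow is the induced map $H_2(G,\Z)\to H_2(G/U,\Z)$, and the right vertical arrow is $G' \to (G/U)'$. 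Moreover, $(G/U)\wedge(G/U)$ is a finite $p$-group: it is a quotient of $(G/U)\otimes(G/U)$, and the tensor square of a finite $p$-group is a finite $p$-group by the results of Ellis and of Brown--Johnson--Robertson quoted in the introduction.

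For 1), take $1 \ne x \in G\wedge G$. If $\kappa(x)\neq 1$, then since $G$ is residually $p$-finite there is a $U$ as above with $\pi_U(\kappa(x))\neq 1$. By commutativity of the diagram, the image of $x$ in the finite $p$-group $(G/U)\wedge(G/U)$ is nontrivial. If instead $\kappa(x)=1$, then $x\in H_2(G,\Z)$. The assumed injectivity of $H_2(G,\Z)\to\varprojlim_U H_2(G/U,\Z)$ gives some $U$ with $x\mapsto$ a nontrivial element of $H_2(G/U,\Z)\subseteq (G/U)\wedge(G/U)$. In both cases $x$ survives in a finite $p$-quotient, so $G\wedge G$ is residually $p$-finite.

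For 2), I would show that the map in 2) factors through the map in 1) via an isomorphism, so the hypothesis of 2) implies that of 1). We have $H_2(\widehat G_p,\Z_p)\simeq \varprojlim_{U,i} H_2(G/U,\Z/p^i\Z)$ over finite $p$-quotients $G/U$. For each such $U$, the group $H_2(G/U,\Z)$ is a finite $p$-group, say of exponent dividing $p^{k}$. The universal coefficient sequence
$$0\to H_2(G/U,\Z)\otimes \Z/p^i\Z \to H_2(G/U,\Z/p^i\Z)\to \mathrm{Tor}(H_1(G/U,\Z),\Z/p^i\Z)\to 0$$
shows that $H_2(G/U,\Z)\to H_2(G/U,\Z/p^i\Z)$ is injective once $i\ge k$. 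Thus $\varprojlim_U H_2(G/U,\Z)$ embeds in $\varprojlim_{U,i}H_2(G/U,\Z/p^i\Z)$, since inverse limits are left exact and the injective maps are compatible for $i$ large. The map $H_2(G,\Z)\to H_2(\widehat G_p,\Z_p)$ factors as $H_2(G,\Z)\to\varprojlim_U H_2(G/U,\Z)$ followed by this embedding. So its injectivity forces injectivity of the first map, and 1) applies.

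The main point requiring care is the naturality and compatibility in the diagram, specifically the identification of the kernel of $\kappa$ with $H_2$ compatibly with $\pi_U$. I would get it from choosing the free presentation of $G/U$ as $F/RU_0$, where $U_0$ is the preimage of $U$ in $F$, so that all maps are induced by the identity on $F$.
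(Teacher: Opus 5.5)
Your proof is correct. The paper gives no proof of this statement; it only quotes it from earlier work of the authors, so there is no internal argument to compare with step by step.

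For part 1), you use the natural central extension
$1 \to H_2(G,\Z) \to G\wedge G \to G' \to 1$, obtained from $G\wedge G\simeq F'/[F,R]$ and the Hopf formula. You then split into the cases $\kappa(x)\neq 1$ and $\kappa(x)=1$. This is the natural argument and it works as written. Your choice of the presentation $F/U_0$ for $G/U$ correctly handles the compatibility of the maps.

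For part 2), your reduction to part 1) is exactly the observation the paper makes in its introduction, in the profinite case. The map into the completed homology factors through $\varprojlim_U H_2(G/U,\Z)$, so injectivity of the composite forces injectivity of the first factor.

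Two small remarks:
\begin{itemize}
\item In part 2) you only need this factorization. Showing that $\varprojlim_U H_2(G/U,\Z)$ embeds in $H_2(\widehat{G}_p,\mathbb{Z}_p)$ via universal coefficients is correct but superfluous.
\item That $(G/U)\wedge(G/U)$ is a finite $p$-group also follows directly from the same central extension. Both $(G/U)'$ and the Schur multiplier of the finite $p$-group $G/U$ are finite $p$-groups, so you need not cite the general closure results for the tensor square.
\end{itemize}
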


\begin{theorem} \cite[Theorem A, d) ]{AT-BR-K} \label{A-L-Dd} Let $G$ be a discrete group. If $G/ G'$ is finitely generated and   either has no 2-torsion or  $G'$ has a complement in $G$  then $G \otimes G$ is residually finite if and only if $G \wedge G$ is residually finite.
\end{theorem}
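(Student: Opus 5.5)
The plan is to reduce the statement to the splitting result Theorem \ref{2tor}. Then handle the extra direct factor $\Delta(G)$ directly, using that it is a finitely generated abelian group.

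First, the hypotheses are exactly those of Theorem \ref{2tor}: $G/G'$ is finitely generated and either has no $2$-torsion or $G'$ has a complement in $G$. So we get an isomorphism
$$G \otimes G \simeq \Delta(G) \times (G \wedge G).$$
Here $\Delta(G)$ is the subgroup of $G \otimes G$ generated by the elements $g \otimes g$, $g \in G$. Recall that it is central in $G\otimes G$ and that its image in $G\wedge G$ is trivial.

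Second, the easy direction. Suppose $G \otimes G$ is residually finite. By the splitting, $G \wedge G$ is isomorphic to a subgroup of $G \otimes G$. Residual finiteness passes to subgroups, so $G \wedge G$ is residually finite.

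Third, the converse. A direct product of two residually finite groups is residually finite: intersect the preimages of finite-index normal subgroups from each factor. So it suffices to show that $\Delta(G)$ is residually finite. For this I would use the standard fact (Brown--Loday) that there is a natural surjection
$$\Gamma(G/G') \to \Delta(G), \qquad \gamma(gG') \mapsto g \otimes g,$$
where $\Gamma$ is Whitehead's quadratic functor. This is well defined because $g\otimes g$ depends only on the class of $g$ modulo $G'$, using the centrality of $\Delta(G)$ and the defining identities of the tensor square. Since $G/G'$ is a finitely generated abelian group, $\Gamma(G/G')$ is finitely generated abelian. Indeed, $\Gamma(\Z)\simeq\Z$, $\Gamma(\Z/n)\simeq \Z/n$ or $\Z/2n$, and $\Gamma(A\oplus B)\simeq\Gamma(A)\oplus\Gamma(B)\oplus A\otimes B$. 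Hence $\Delta(G)$ is a finitely generated abelian group, and therefore residually finite.

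The main point needing care is the last step: that $\Delta(G)$ is a quotient of $\Gamma(G/G')$, and hence finitely generated, in both cases of the hypothesis. In the no-$2$-torsion case the remark after Theorem \ref{2tor} even gives $\Delta(G)\simeq \Delta(G/G')$. In the complement case I would rely on the general surjection $\Gamma(G/G')\to\Delta(G)$, which does not need any torsion assumption. Once finite generation of $\Delta(G)$ is established, the rest is formal.
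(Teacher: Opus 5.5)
Your proof is correct and follows essentially the route the paper indicates for this cited result: the Blyth--Fumagalli--Morigi splitting $G\otimes G\simeq \Delta(G)\times(G\wedge G)$ from Theorem~\ref{2tor}, with $\Delta(G)$ handled as a quotient of $\Gamma(G/G')$. Since $\Gamma(G/G')$ is a finitely generated abelian group, $\Delta(G)$ is residually finite, which is the same reasoning the paper uses for the Grigorchuk group in Proposition~\ref{GG20}.
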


   The map $H_2(G, \mathbb{Z}) \to H_2(\widehat{G}, \widehat{\mathbb{Z}})$ in the above theorem is induced by the canonical maps $G \to \widehat{G}$ and $\Z \to \widehat{\Z}$. More precisely for $\widehat{G} \simeq \varprojlim_{i \in I} G_i$, where $G_i$ are finite quotients of $G$ and $\widehat{\Z} = \varprojlim_{j \in J} \Z/ m_j \Z$ then the homomomorphisms of groups $G \to G_i$ and $\Z \to \Z / m_j \Z$ induces a map in homology of abstract groups $H_2(G, \Z) \to H_2(G_i, \Z/ m_j )$. This gives a map
  $$H_2(G, \Z) \to H_2(\widehat{G}, \widehat{\Z}) \simeq \varprojlim_{i \in I,j \in J} H_2(G_i, \Z/ m_j \Z) 
  $$

 For $ q \geq 2$ an integer we set $\mathbb{F}_q = \Z/ q \Z$.

 \begin{prop} \cite[Prop. E] {AT-BR-K}\label{iff25} Let $q \geq 2$ be an integer,  $p$ be a prime integer  and $G$ be a finitely generated  discrete group.
 
 a)  Suppose that $G$ is residually finite.  Then $G \wedge^q G$ is residually finite if and only if the map $H_2(G, \mathbb{F}_q) \to H_2(\widehat{G}, \mathbb{F}_q)$, induced by the canonical map  $G \to \widehat{G}$, is injective.
 
 b) Suppose that $G$ is residually $p$-finite.  Then $G \wedge^q G$ is residually $p$-finite if and only if the map $H_2(G, \mathbb{F}_q) \to H_2(\widehat{G}_p, \mathbb{F}_q)$, induced by the canonical map  $G \to \widehat{G}_p$, is injective.
  \end{prop}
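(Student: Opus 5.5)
The plan is to realise $G \wedge^q G$ as a central extension whose kernel is exactly $H_2(G,\mathbb{F}_q)$, and then compare with finite quotients of $G$ on both sides. I prove part a) in detail; part b) is identical with ``finite'' replaced by ``finite $p$-group'' throughout.

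Fix a presentation $G \simeq F/R$ with $F$ free of finite rank. By the Conduch\'e--Rodr\'iguez-Fern\'andez/Ellis description (the $q$-analogue of Theorem \ref{wedge}),
$$G \wedge^q G \simeq F'F^q/[F,R]R^q .$$
The commutator map has image $N = G'G^q$ and kernel
$$M = (R \cap F'F^q)/[F,R]R^q \simeq H_2(G,\mathbb{F}_q),$$
which is the $q$-Hopf formula. This gives a central extension
$$1 \to H_2(G,\mathbb{F}_q) \to G\wedge^q G \to G'G^q \to 1 .$$
Since $G$ is finitely generated, $G/G'G^q$ is a finitely generated abelian group of exponent $q$, hence finite. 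So $N$ has finite index in $G$ and is residually finite. The whole construction is functorial in $G$. Moreover $H_2(\widehat G,\mathbb{F}_q) \simeq \varprojlim_U H_2(G/U,\mathbb{F}_q)$, with $U$ running over the finite-index normal subgroups of $G$.

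\emph{($\Leftarrow$)} Assume $H_2(G,\mathbb{F}_q)\to H_2(\widehat G,\mathbb{F}_q)$ is injective, and let $1 \ne x \in G\wedge^q G$.
\begin{itemize}
\item If $x \notin M$, its image in the residually finite group $N$ is nontrivial, so some finite quotient of $N$ separates it.
\item If $x \in M$, injectivity gives some $U$ such that $x$ has nonzero image in $H_2(G/U,\mathbb{F}_q)$. Functoriality of the central extension for $G \to G/U$ sends $x$ to a nontrivial element of $G/U \wedge^q G/U$, which is finite because $G/U$ is finite.
\end{itemize}
Hence $G\wedge^q G$ is residually finite.

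\emph{($\Rightarrow$)} Assume $G\wedge^qG$ is residually finite, and let $1\neq m\in M$, represented by an element of $R\cap F'F^q$. Choose a finite-index subgroup $K$ of $G\wedge^q G$ with $m\notin K$. Let $L \le F'F^q$ be its preimage; then $L \supseteq [F,R]R^q$ and $m \notin L$. Since $F'F^q$ has finite index in $F$, so does $L$, so the normal core $L_0$ of $L$ in $F$ has finite index. Put $U = RL_0/R$, a finite-index normal subgroup of $G$, so that $G/U = F/RL_0$ and
$$H_2(G/U,\mathbb{F}_q) \simeq (RL_0\cap F'F^q)/[F,RL_0](RL_0)^q .$$
The key containment is
$$[F,RL_0](RL_0)^q \subseteq [F,R]R^q\,[F,L_0]L_0^q \subseteq L .$$
For the first inclusion, expand $(rs)^q$ modulo commutators in $[R,L_0]\subseteq[F,L_0]$. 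The second holds because $L_0\subseteq L$ and $[F,R]R^q\subseteq L$. Thus $m$ survives in $H_2(G/U,\mathbb{F}_q)$, hence in $H_2(\widehat G,\mathbb{F}_q)$, which proves injectivity.

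The main obstacle is this last containment: one must control the $q$-th powers of products $rs$ with $r \in R$, $s \in L_0$, via a Hall--Petrescu type expansion, so that they lie in $R^qL_0^q[F,L_0]$. I would first check this modulo $[F,L_0]$, where $L_0$ becomes central modulo $[F,L_0]$ and the expansion $(rs)^q \equiv r^q s^q$ is immediate.
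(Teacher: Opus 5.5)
Your proof of a) is sound. The containment you flag as the main obstacle is immediate: $L_0$ is normal in $F$, so $(rs)^q\in r^qL_0\subseteq R^qL_0\subseteq L$ for $r\in R$, $s\in L_0$, and $[F,RL_0]\subseteq[F,R][F,L_0]\subseteq L$. No Hall--Petrescu expansion is needed. The ($\Leftarrow$) half of b) also transfers, once you note that $(G/U)\wedge^q(G/U)$ is a finite $p$-group when $G/U$ is. Indeed, it is a central extension with kernel $H_2(G/U,\mathbb{F}_q)$, which is killed by $\gcd(q,|G/U|)$, and quotient $(G/U)'(G/U)^q$.

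The gap is in ($\Rightarrow$) of b): replacing ``finite'' by ``finite $p$-group'' breaks at the passage from $K$ to $U$. You need $G/U=F/RL_0$ to be a finite $p$-group. But $L_0\subseteq F'F^q$, so $F/L_0$ maps onto $F/F'F^q\cong(\mathbb{Z}/q\mathbb{Z})^n$, and $G/U$ maps onto $G/G'G^q$.

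If $q=p^s$ this is harmless. Take $K$ normal of $p$-power index, not just of finite index. Then $F'F^q/L_0$ embeds in the finite product of the $p$-groups $F'F^q/L^f$, so $F/L_0$ is a $p$-group.

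If $q=p^sm'$ with $m'>1$ prime to $p$, then $G/U$ is in general not a $p$-group, for instance whenever $G^{ab}$ is infinite. Your argument then only shows that $m$ survives in $H_2(\widehat{G},\mathbb{F}_q)$, not in $H_2(\widehat{G}_p,\mathbb{F}_q)$.

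The natural repairs do not close this.
\begin{itemize}
\item Passing to the largest $p$-quotient of $F/L_0$ kills exactly $[F'F^q/L_0,C]$ inside $F'F^q/L_0$, where $C$ is a complement of order prime to $p$ to $F'F^q/L_0$ in $F'F^{p^s}/L_0$. Nothing in your argument prevents the central element $\bar m$ from lying there; compare $-1\in[Q_8,C_3]$ in $SL(2,3)$.
\item Residual $p$-finiteness of $G\wedge^qG$ does force $H_2(G,\mathbb{Z}/m'\mathbb{Z})=0$, so $H_2(G,\mathbb{F}_q)\cong H_2(G,\mathbb{Z}/p^s\mathbb{Z})$ and one would like to replace $q$ by $p^s$. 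But $G\wedge^qG$ then sits inside $G\wedge^{p^s}G$ only as a normal subgroup of index prime to $p$. Residual $p$-finiteness does not pass up to such overgroups.
\end{itemize}
So for $q$ not a power of $p$, your proposal does not establish ($\Rightarrow$) in b), and an additional idea is required. Only the ($\Leftarrow$) directions are invoked in the present paper.
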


Note that this implies that
if $G$ is a finitely generated, residually finite group such that $G \in \mathcal{A}_2$ and $q \geq 2$ is an integer
then $G \wedge^q G$ is residually finite. For the definition of $\mathcal{A}_2$ we refer the reader to Section \ref{section-nongood}.

\section{Homology of wreath products} \label{Section3}

Let $$G = A \wr B$$ be the restricted wreath product i.e. $$G = M \rtimes B,$$ where $M = \oplus_{b \in B } A$ and $B$ acts via conjugation (on the right) on $M$ by permuting the factors via the action of $B$ on $B$ by multiplication on the right. Note that $A$ in general  is not abelian but we use the notation $\oplus_{b \in B} A$ to mean that for $a \in M$,  $a = (a_b)_{b \in B}$ where all but finitely many $a_b = 0$, all $a_b \in A$. For simplicity we write   $(a_b)$ for  $ (a_b)_{b \in B}$. Thus
an element $g \in G$ has a unique decomposition $g = a b$ where $b \in B$, $a \in M$. The conjugation action is given by $
(a_b) b_0 = b_0 (c_b)$ where $a_b =  c_{b b_0}$, $b_0 \in B$.

\begin{lemma} \label{L1}  Let $G = A \wr B$. Then
$H_2(G, \Z) \simeq H_0(B, H_2(M, \Z)) \oplus H_2(B, \Z)$.
\end{lemma}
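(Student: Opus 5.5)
We use the Lyndon--Hochschild--Serre spectral sequence of the split extension $1 \to M \to G \to B \to 1$ with trivial coefficients $\Z$:
$$E^2_{p,q} = H_p(B, H_q(M,\Z)) \Rightarrow H_{p+q}(G,\Z).$$
The terms contributing to total degree $2$ are $E^2_{0,2} = H_0(B, H_2(M,\Z))$, $E^2_{1,1} = H_1(B, H_1(M,\Z))$ and $E^2_{2,0} = H_2(B,\Z)$. The plan is to show that $E^2_{1,1}=0$, that the two remaining terms survive to $E^\infty$, and that the resulting filtration of $H_2(G,\Z)$ splits.

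\emph{Step 1: $E^2_{1,1}=0$.} We have $H_1(M,\Z) = M^{ab} = \oplus_{b\in B} A^{ab}$, with $B$ permuting the summands regularly. Hence $H_1(M,\Z)\simeq \Z B\otimes_{\Z} A^{ab}$ is an induced module $\mathrm{Ind}_1^B A^{ab}$. By Shapiro's lemma $H_p(B, H_1(M,\Z)) \simeq H_p(1, A^{ab}) = 0$ for all $p\geq 1$. In particular $E^2_{1,1}=0$, and also $E^2_{2,1}=E^2_{3,1}=0$.

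\emph{Step 2: degeneration in total degree $2$.} Since the extension splits via $B \to G$, the edge map $H_n(G,\Z)\to H_n(B,\Z)$ is a split surjection for every $n$. Consequently every differential with target in the bottom row is zero, so $E^2_{p,0}=E^\infty_{p,0}$. The term $E^2_{0,2}$ can be hit only by $d^2\colon E^2_{2,1}\to E^2_{0,2}$ and $d^3\colon E^3_{3,0}\to E^3_{0,2}$. The first vanishes because its source is $0$ by Step 1. The second vanishes because it has source in the bottom row, and all differentials leaving the bottom row are zero: a nonzero one would make $E^\infty_{p,0}$ a proper subgroup of $E^2_{p,0}$, contradicting surjectivity of the edge map. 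Hence $E^\infty_{0,2}=E^2_{0,2}$, $E^\infty_{1,1}=0$ and $E^\infty_{2,0}=H_2(B,\Z)$.

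\emph{Step 3: splitting.} We obtain a short exact sequence
$$0\to H_0(B,H_2(M,\Z)) \to H_2(G,\Z)\to H_2(B,\Z)\to 0,$$
in which the first map is induced by the inclusion $M\to G$, factored through coinvariants. The sequence splits because the composite $H_2(B,\Z)\to H_2(G,\Z)\to H_2(B,\Z)$, induced by $B\to G\to B$, is the identity. This gives the claimed direct sum decomposition.

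The only step needing care is Step 2, namely the justification that the bottom-row differentials vanish. If one prefers to avoid the edge-map argument, the vanishing of $d^2\colon E^2_{2,1}\to E^2_{0,2}$ already follows from Step 1. The vanishing of $d^3$ can then be obtained by comparing with the spectral sequence of the trivial extension $1 \to 1 \to B \to B \to 1$, via the maps $B \to G \to B$ of extensions. Alternatively, one can use the standard fact that for split extensions the bottom row consists of permanent cycles that are not boundaries. No hypothesis on $A$ (abelian or not) is needed anywhere.
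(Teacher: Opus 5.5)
Your proof is correct and follows essentially the same route as the paper. Both use the LHS spectral sequence and Shapiro's lemma to get $E^2_{1,1}=E^2_{2,1}=0$. Both use the retraction $B\to G\to B$ to see that no differential leaves the bottom row (the paper identifies the edge map with $\pi_2$ exactly as in your alternative, by comparing with $1\to 1\to B\to B\to 1$), and then use the retraction again to split the resulting short exact sequence. One wording slip: at the start of Step 2 you mean differentials with \emph{source} in the bottom row, since in a first-quadrant homological spectral sequence nothing can land there; this is what you correctly use a few lines later.
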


\begin{proof}
Consider the LHS spectral sequence associated to the short exact sequence of groups $ 1 \to M \to G \to B \to 1$ i.e.
$$E_{p,q}^2 = H_p(B, H_q(M, \mathbb{Z}))$$
 that converges to $H_n(G, \mathbb{Z})$.
 Let $$i : B \to G$$
 be the inclusion map and $\pi : G \to B$ be the projection with kernel $M$. Then $\pi \circ i = id_B$, so $B$ is a retract of $G$. Let
 $$i_n : H_n(B, \mathbb{Z}) \to H_n(G, \Z) \hbox{ and } \pi_n : H_n(G, \mathbb{Z}) \to H_n(B, \Z)$$
 be the maps induced by  $i$ and  $\pi$ in homology. Then $id = (\pi \circ i)_n = \pi_n \circ i_n$, hence $i_n$ is injective and $\pi_n$ is surjective and $ H_n(B, \mathbb{Z})$ is a retract of $H_n(G, \Z)$.
 
 The convergence of the spectral sequence gives that $H_n: = H_n(G, \Z)$ has a filtration
 $$ 0 = \Phi_{-1}(H_n) \subseteq \Phi_{0}(H_{n}) \subseteq \ldots \subseteq \Phi_{k-1}(H_n) \subseteq \Phi_{k}(H_{n}) \subseteq \ldots \subseteq \Phi_n(H_n) = H_n$$
where 
$$\Phi_{k}(H_n)/ \Phi_{k-1}(H_n) \simeq E_{k, n-k}^{\infty}$$
By the construction of the spectral sequence  there are differential maps
$$d_{p,n-p}^r: E_{p, n-p}^r \to E_{p-r, n-p+r-1}^r$$
and $$E_{p, n-p}^{r+1} = Ker (d_{p,n-p}^r)/ Im (d_{p+r, n+1 - p - r}^r).$$
Thus
we have that $E_{k, n-k}^{\infty}$ is a subquotient of $E_{k, n-k}^{2}$, in particular using degree argument $E_{n, 0}^{\infty}$ is a subqroup of $E_{n, 0}^{2} = H_n(B, \Z)$ and the canonical projection map
$$H_n(G, \Z) \to \Phi_n(H_n)/ \Phi_{n-1}(H_n) \simeq E_{n,0}^{\infty}$$
is actually the map $\pi_n$.( To see this use that the spectral sequence is natural and compare it with the LHS spectral sequence $$\tilde{E}_{p,q}^2 = H_p(B, H_q(1, \mathbb{Z}))$$ corresponding to the short exact sequence of groups $1 \to 1 \to B \to B \to 1$ and link the two short exact sequences via the map $\pi$.) This implies that $E_{n,0}^{\infty} = E_{n,0}^2$ and all differentials $d_{n,0}^r : E_{n,0}^r \to E_{n-r, r-1}^r$ are zero. Then all differentials that leave or  end at $E_{1,1}^r$ are zero, so
$$E_{1,1}^2 = E_{1,1}^{\infty}$$
And for the same reason 
$$E_{2,0}^2 = E_{2,0}^{\infty}$$
We claim that 
$$E_{2,1}^2 = 0 = E_{1,1}^2, \hbox{ hence } E_{2,1}^{\infty} = 0 =  E_{1,1}^{\infty} $$ 
Then all differentials that enter and leave $E_{0,2}^r$ are zero, hence
$$E_{0,2}^2 = E_{0,2}^{\infty}$$
Then we conclude that there is a short exact sequence $0 \to E^{\infty}_{0,2} \to H_2(G, \mathbb{Z}) \to E_{2,0}^{\infty} \to 0$ i.e. $$0 \to H_0(B, H_2(M,  \mathbb{Z})) \to H_2(G, \mathbb{Z}) \to H_2(B, \mathbb{Z}) \to 0$$ Since  $H_2(B, \mathbb{Z})$ is a retract of $H_2(G, \mathbb{Z})$ we conclude that the  short exact sequence splits i.e. 
$$H_2(G, \mathbb{Z}) \simeq   H_0(B, H_2(M,  \mathbb{Z}))  \oplus H_2(B, \mathbb{Z})$$
To complete the proof we have to show that $E_{2,1}^2 = 0$  and $E_{1,1}^2 = 0$. Recall $$E_{2,1}^2 = H_2(B, H_1(M, \Z)) = H_2(B, M^{ab})$$
where $M^{ab} = M/ [M,M]$ is the abelianization of $M$. Note that
$$M^{ab} \simeq \oplus_{B} A^{ab} \simeq A^{ab} \otimes_{\Z} \mathbb{Z}  B$$
Then   by Shapiro Lemma   
$
E_{j,1}^2 \simeq H_j(B, M^{ab}) \simeq  { H_j(B, A^{ab} \otimes_{\Z} \mathbb{Z} B)  \simeq H_j(1, A^{ab}) =0}$ for $j \geq 1$. 
 \end{proof}
 
 Define $$C = A^{ab} \otimes_{\Z} A^{ab}$$ and denote $$  \mu : C \to C$$
 the $\Z$-linear map that swaps the two factors in the tensor product i.e. $\mu ( \bar{a}_1 \otimes \bar{a}_2) =  \bar{a}_2 \otimes \bar{a}_1$.
 Set
$$ Z(A, B) = \oplus_{b \in B \setminus \{ 1 \}}  Cb$$
where $Cb \simeq C$ and set $I(A,B)$ the $\Z$-submodule generated by $\{cb + \mu(c) b^{-1} \ | \ c \in C, b \in B \setminus \{ 1 \} \}$.

The following  theorem generalizes the main result of \cite{Bl}.
 \begin{theorem} \label{T1} There is an isomorphism of abelian groups
 $$H_2(A \wr B, \Z) \simeq H_2(B, \Z) \oplus H_2(A, \Z) \oplus ( Z(A, B) / I(A, B))$$
 \end{theorem}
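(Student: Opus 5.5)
By Lemma \ref{L1} it suffices to compute $H_0(B, H_2(M,\Z))$ and show it is isomorphic to $H_2(A,\Z)\oplus Z(A,B)/I(A,B)$. The computation has three steps: a Künneth decomposition of $H_2(M,\Z)$, a reindexing of the $B$-action, and the passage to coinvariants.

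\emph{Step 1: Künneth for $M$.} Write $M=\oplus_{b\in B}A_b$ with $A_b\simeq A$. For a direct sum of two groups, Künneth in degree $2$ gives
$$H_2(A_1\times A_2)\simeq H_2(A_1)\oplus H_2(A_2)\oplus (A_1^{ab}\otimes_\Z A_2^{ab}),$$
with no Tor term, since $\mathrm{Tor}$ only contributes from $H_0\otimes H_0$-type terms in degree $1$. Homology commutes with directed colimits of groups, so passing to the colimit over finite subsets of $B$ gives a natural isomorphism
$$H_2(M,\Z)\simeq \bigoplus_{b\in B} H_2(A_b,\Z)\;\oplus\;\bigoplus_{\{b,b'\}} A_b^{ab}\otimes A_{b'}^{ab}.$$
The second sum runs over unordered pairs $b\neq b'$. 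To avoid choosing orientations, I will instead index it by ordered pairs $(b,b')$ with $b\ne b'$ and identify $x\otimes y$ in the $(b,b')$ summand with $y\otimes x$ in the $(b',b)$ summand. Concretely, the cross term is the quotient of $\oplus_{(b,b')} C$ by the relations $c_{(b,b')}=\mu(c)_{(b',b)}$. I also need to check the sign: in the Künneth cross term $H_1\otimes H_1$, swapping the factors introduces the sign $(-1)^{1\cdot 1}=-1$. This sign is exactly what produces the relation $cb+\mu(c)b^{-1}$ in $I(A,B)$, rather than a difference. I expect this sign bookkeeping, i.e.\ making sure the naturality of the cross product under permuting factors really carries the sign $-1$, to be the main point requiring care.

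\emph{Step 2: the $B$-action.} $B$ acts by permuting the factors: $b_0$ sends the $(b,b')$ summand to the $(bb_0,b'b_0)$ summand, as the identity on $C$. On the diagonal part, $\oplus_b H_2(A_b)\simeq H_2(A)\otimes\Z B$, so its coinvariants are $H_2(A,\Z)$. For the cross part, the $B$-orbits of ordered pairs are in bijection with $B\setminus\{1\}$ via $(b,b')\mapsto b'b^{-1}$, and the action is free. Hence, before imposing the swap identification, the module is $\oplus_{g\in B\setminus\{1\}} C\otimes\Z B$, whose coinvariants are $\oplus_{g\ne 1}Cg=Z(A,B)$.

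\emph{Step 3: coinvariants of the cross term.} Under the bijection of Step 2, the swap $(b,b')\leftrightarrow(b',b)$ sends $g=b'b^{-1}$ to $bb'^{-1}=g^{-1}$. The swap relation is $B$-equivariant, and taking coinvariants is right exact, so the coinvariants of the quotient are the quotient of the coinvariants. This quotient is $Z(A,B)$ modulo the classes $cg+\mu(c)g^{-1}$, with the sign from Step 1; that is, exactly $Z(A,B)/I(A,B)$. Combining the three steps with Lemma \ref{L1} gives the stated isomorphism.
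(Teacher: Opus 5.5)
Your proposal is correct and is essentially the paper's proof. Both arguments run through Lemma \ref{L1}, then K\"unneth with passage to the directed colimit, then a presentation of the cross terms as the module on ordered pairs $(b_1,b_2)$ modulo the swap relations, and finally the coinvariants computed by right exactness, with orbits indexed by $b_2b_1^{-1}$ and the swap sending $g$ to $g^{-1}$.

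The only difference is in how the sign is justified. You take it from the graded commutativity $(-1)^{1\cdot 1}$ of the cross product, while the paper reads it off from $[a_1^{b_1},a_2^{b_2}]^{-1}=[a_2^{b_2},a_1^{b_1}]$ in the Hopf formula. In a write-up, state the relation as $c_{(b,b')}=-\mu(c)_{(b',b)}$ from the start rather than first writing it unsigned.
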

 \begin{proof}
 By the K\"unneth formula $$H_2(A_1 \times A_2, \Z) \simeq H_2(A_1, \Z) \oplus H_2(A_2, \Z) \oplus H_1(A_1, \Z) \otimes_{\Z} H_1(A_2, \Z).$$
 Itterating we get
 $$H_2(A_1 \times \ldots \times A_n, \Z) \simeq (\oplus_{1 \leq i \leq n} H_2(A_i, \Z)) \oplus ( \oplus_{1 \leq i < j \leq n} H_1(A_i, \Z) \otimes_{\Z} H_1(A_j, \Z) )$$

 Note that $$M = \oplus_B A \simeq \oplus_{b \in B} A_b$$ where each $A_b \simeq A$ as a group and we fix a linear (total) order of $B$ as a set (that is not linked with the group action).
 Then
 since $H_2$ commutes with direct limits (over directed sets) we get that
 $$H_2(M, \Z) \simeq (\oplus_{b \in B} H_2(A_b, \mathbb{Z})) \oplus ( \oplus_{b_1 < b_2; b_1, b_2 \in B} H_1(A_{b_1}, \Z) \otimes_{\Z} H_1(A_{b_2}, \Z) )$$
 Note that
 $$\oplus_{b \in B} H_2(A_b, \mathbb{Z}) \simeq H_2(A, \Z) \otimes_{\Z} \Z B$$
  as a $\Z B$-module. Hence
  \begin{equation}
  \label{iso100} H_2(M, \Z) \simeq (H_2(A, \Z) \otimes_{\Z} \Z B) \oplus ( \oplus_{b_1 < b_2; b_1, b_2 \in B} H_1(A_{b_1}, \Z) \otimes_{\Z} H_1(A_{b_2}, \Z) ) \end{equation}
 Note that $$H_1(A_b, \Z) \simeq A_b^{ab} \hbox{ the abelianization of } A_b$$ and for $a_1 \in A_{b_1}, a_2 \in A_{b_2}$ with $b_1 < b_2$ in $B$ we have that for $\bar{a}_i $ the image of $a_i$ in $A_{b_i}^{ab}$ the element $\bar{a}_1 \otimes \bar{a}_2 \in H_1(A_{b_1}, \Z) \otimes_{\Z} H_1(A_{b_2}, \Z) $ corresponds to the element  
 $$r [R,F]  \in  (F' \cap R )  / [R, F] \simeq H_2(M, \Z)$$
 where $$ r = [a_1^{b_1}, a_2^{b_2}] = (a_1^{b_1})^{-1} (a_2^{b_2})^{-1} (a_1^{b_1}) (a_2^{b_2})  $$ is a relation of $M$ and we have used the Hopf formula for the Schur multiplier of $M$.
 Note that
 $$[a_1^{b_1} , a_2^{b_2}]^{-1} = [a_2^{b_2}, a_1^{b_1}] \hbox{ and } [a_1^{b_1}, a_2^{b_2}]^b = [a_1^{b_1 b}, a_2^{b_2 b}] $$
 Then there is an isomorphism of $\Z B$-modules
 \begin{equation} \label{iso200} 
 V(A, B) : = \oplus_{b_1 < b_2; b_1, b_2 \in B} H_1(A_{b_1}, \Z) \otimes_{\Z} H_1(A_{b_2}, \Z) \simeq \widetilde{V}(A,B)/ J\end{equation} where
 $$\widetilde{V}(A,B) : =
 (\oplus_{(b_1, b_2) \in B \times B \setminus diag(B)} (A^{ab} \otimes A^{ab}) (b_1, b_2))
 $$
 where  each $(A^{ab} \otimes A^{ab}) (b_1, b_2)$ is a copy $A^{ab} \otimes A^{ab}$ indexed by $(b_1, b_2)$,  the $B$ action on $\widetilde{V}(A, B)$ is the diagonal action of $B$ on $B \times B$ and $J$ is the $\Z$-submodule generated by $ \bar{a}_1 \otimes \bar{a}_2 (b_1, b_2) + \bar{a}_2 \otimes \bar{a}_1 (b_2, b_1)$.
 
 We want to calculate $H_0(B, V(A, B))$. The short exact sequence $0 \to J \to \widetilde{V}(A,B) \to V(A, B) \to 0$ gives an exact sequence
 $$H_0(B, J) \to H_0(B, \widetilde{V}(A,B)) \to H_0(B, V(A,B)) \to 0$$
 Note that
 $$H_0(B, \widetilde{V}(A,B)) \simeq \oplus_{b \in B \setminus \{ 1 \}} A^{ab} \otimes A^{ab} (1, b)$$
 that sends the class of $ \bar{a}_1 \otimes \bar{a}_2 (b_1, b_2)$ to  $\bar{a}_1 \otimes \bar{a}_2 (1, b_2 b_1^{-1})$. Under this isomorphism the image of $H_0(B,J)$ in  $H_0(B, \widetilde{V}(A,B))$ is sent to $ \bar{a}_1 \otimes \bar{a}_2 (1, b_2 b_1^{-1}) + \bar{a}_2 \otimes \bar{a}_1 (1, b_1 b_2^{-1})$.
  Hence
 \begin{equation} \label{iso300} H_0(B, V(A,B)) \simeq Z(A, B) / I(A, B) \end{equation}
Finally by  (\ref{iso100}), (\ref{iso200}) and (\ref{iso300})
$$H_0(B, H_2(M, \Z)) \simeq H_0(B, H_2(A, \Z) \otimes_{\Z} \Z B) \oplus H_0(B, V(A,B)) $$ $$\simeq 
H_2(A, \Z) \oplus ( Z(A, B) / I(A, B))$$

\end{proof}
 \begin{cor} \label{T1Cor} For an integer $q \geq 2$ there is an isomorphism of abelian groups
 $$H_2(A \wr B, \Z/ q \Z) \simeq H_2(B, \Z/q \Z) \oplus H_2(A, \Z/ q \Z) \oplus ( Z_q(A, B) / I_q(A, B))$$
 where $Z_q(A, B) = Z(A^{ab} \otimes  \Z/ q \Z, B)$ and $I_q(A, B) = I(A^{ab} \otimes  \Z/ q \Z, B)$.
 \end{cor}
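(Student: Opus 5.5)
We will repeat the proof of Lemma \ref{L1} and Theorem \ref{T1}, now with coefficients in $\Z/q\Z$ everywhere. Write $\mathbb{F}_q = \Z/q\Z$ and consider the LHS spectral sequence
$$E^2_{p,r} = H_p(B, H_r(M, \mathbb{F}_q)) \Rightarrow H_{p+r}(G, \mathbb{F}_q).$$

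\emph{Collapse of the spectral sequence.} Since $B$ is a retract of $G$, the same naturality argument as in Lemma \ref{L1} shows two things: the bottom row survives, $E^\infty_{n,0} = E^2_{n,0}$, and all differentials leaving it vanish. Next we need $E^2_{j,1} = 0$ for $j \geq 1$. Here $H_1(M,\mathbb{F}_q) \simeq M^{ab}\otimes \mathbb{F}_q \simeq (A^{ab}\otimes \mathbb{F}_q)\otimes_{\Z} \Z B$ is an induced $B$-module, so Shapiro's lemma gives $H_j(B, H_1(M,\mathbb{F}_q)) \simeq H_j(1, A^{ab}\otimes\mathbb{F}_q) = 0$. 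As before, this yields a short exact sequence
$$0 \to H_0(B, H_2(M,\mathbb{F}_q)) \to H_2(G,\mathbb{F}_q) \to H_2(B,\mathbb{F}_q) \to 0,$$
which splits by the retraction. So it remains to compute $H_0(B, H_2(M,\mathbb{F}_q))$.

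\emph{Künneth with $\mathbb{F}_q$-coefficients.} For a product $A_1\times A_2$ we have
$$H_2(A_1\times A_2,\mathbb{F}_q) \simeq H_2(A_1,\mathbb{F}_q)\oplus H_2(A_2,\mathbb{F}_q)\oplus \bigl(H_1(A_1,\mathbb{F}_q)\otimes_{\mathbb{F}_q} H_1(A_2,\mathbb{F}_q)\bigr)\oplus \mathrm{Tor}(\ldots),$$
where the Tor-type terms appear in degree $2$ through $H_0\otimes H_2$, $H_2\otimes H_0$ and $\mathrm{Tor}^{\mathbb{F}_q}_1(H_0,H_1)$. All of these are zero or already counted, because $H_0 = \mathbb{F}_q$ is free over $\mathbb{F}_q$. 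Indeed, over the ring $\mathbb{F}_q$ the Künneth formula applies to the chain complexes $C_*(A_i)\otimes\mathbb{F}_q$, which are free $\mathbb{F}_q$-modules, and the term $\mathrm{Tor}^{\mathbb{F}_q}_1(H_0,H_1) = 0$. This is the step I expect to need the most care, since $\mathbb{F}_q$ need not be a field when $q$ is composite. Even so, the Künneth exact sequence holds over $\mathbb{F}_q$ in low degrees because $H_0=\mathbb{F}_q$ is free, and it splits naturally enough for our purposes. Also $H_1(A_i,\mathbb{F}_q)\simeq A_i^{ab}\otimes\mathbb{F}_q$.

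\emph{Finishing the computation.} Iterating and passing to direct limits gives
$$H_2(M,\mathbb{F}_q)\simeq (H_2(A,\mathbb{F}_q)\otimes\Z B)\oplus V_q,$$
where $V_q$ is the module $V(A,B)$ of Theorem \ref{T1} with $A^{ab}$ replaced by $A^{ab}\otimes\mathbb{F}_q$. The Hopf-type identification of the mixed classes with commutators $[a_1^{b_1},a_2^{b_2}]$ carries over verbatim, as do the antisymmetry relation and the $B$-action, so $V_q \simeq \widetilde V_q/J_q$. Taking coinvariants exactly as in (\ref{iso300}) then gives
$$H_0(B, H_2(M,\mathbb{F}_q)) \simeq H_2(A,\mathbb{F}_q)\oplus Z_q(A,B)/I_q(A,B).$$
Combining this with the split sequence proves the corollary.
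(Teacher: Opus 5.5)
Your proposal is correct in outline but takes a different route from the paper, and one step needs a different justification.

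\textbf{Comparison with the paper.} The paper does not rerun the spectral sequence with $\Z/q\Z$ coefficients. It applies the universal coefficient theorem to $A\wr B$, $A$ and $B$, and uses $H_1(A\wr B,\Z)\simeq A^{ab}\oplus B^{ab}$ to split the $\mathrm{Tor}$ term. It then notes $Z_q(A,B)/I_q(A,B)\simeq (Z(A,B)/I(A,B))\otimes\Z/q\Z$ by right exactness of $\otimes$, and reads the answer off Theorem \ref{T1}. That takes a few lines. However, universal coefficient sequences split only non-naturally, so it yields an abstract isomorphism. Your argument is self-contained and, once repaired as below, builds every identification from natural maps: the retraction onto $B$, edge maps, inclusions of factors, cross products. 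That is what one wants when comparing $A\wr B$ with its finite quotients $A_i\wr B_i$, as the paper does afterwards. Your spectral sequence part is fine as written: $H_1(M,\Z/q\Z)\simeq (A^{ab}\otimes\Z/q\Z)\otimes_{\Z}\Z B$ is induced, so $E^2_{1,1}=E^2_{2,1}=0$, and the retraction handles the bottom row and the splitting.

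\textbf{The K\"unneth step.} The step you flagged is not justified by what you wrote. The K\"unneth short exact sequence needs the boundaries of one of the complexes to be flat. Over $\Z$ this is automatic, but over $\Z/q\Z$ with $q$ composite, freeness of the chain modules is not enough. For instance, over $\Z/4\Z$ take $P=(\Z/4\Z\xrightarrow{2}\Z/4\Z)$. Then $H_2(P\otimes P)\simeq\Z/2\Z$, while the K\"unneth formula would predict a group of order $8$.

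Your degree-$2$ formula is nevertheless true, and the easiest proof works over $\Z$. Apply the integral K\"unneth theorem to $C_*(A_1)\otimes_{\Z}(C_*(A_2)\otimes\Z/q\Z)$; this is legitimate since $C_*(A_1)$ is $\Z$-free. Its left-hand term is $\bigoplus_{i+j=2}H_i(A_1,\Z)\otimes H_j(A_2,\Z/q\Z)$ and its quotient is $\mathrm{Tor}^{\Z}_1(A_1^{ab},\Z/q\Z)$. Compare with the same sequence for $A_1\times 1$, and use the projections onto the factors. This gives the natural decomposition
\[
H_2(A_1\times A_2,\Z/q\Z)=i_{1*}H_2(A_1,\Z/q\Z)\oplus i_{2*}H_2(A_2,\Z/q\Z)\oplus (A_1^{ab}\otimes A_2^{ab}\otimes\Z/q\Z),
\]
where $i_1,i_2$ are the inclusions of the factors and the last summand is embedded by the cross product. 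Alternatively, use the K\"unneth spectral sequence over $\Z/q\Z$. There, every term other than $E^2_{0,2}$ that has total degree $2$, or can hit $E_{0,2}$, is a $\mathrm{Tor}^{\Z/q\Z}_{\geq 1}$ with an argument $H_0=\Z/q\Z$, hence zero.

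Naturality is what makes the decomposition of $H_2(M,\Z/q\Z)$ one of $\Z B$-modules, which you need before taking coinvariants. Similarly, the Hopf formula has a different shape with $\Z/q\Z$ coefficients, so ``verbatim'' does not apply. Justify the $B$-action on the cross terms and the sign relation defining $J_q$ by naturality and graded commutativity of the cross product. Alternatively, reduce the integral cross terms of Theorem \ref{T1} modulo $q$.
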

 
 \begin{proof} By the universal coefficients theorem and the fact that  $H_1(A \wr B, \Z) \simeq A^{ab} \oplus B^{ab}$  there are exact sequences,   where $\otimes = \otimes_{\mathbb{Z}}$,
 $$0 \to H_2(A \wr B, \Z) \otimes  \Z/ q \Z \to H_2(A \wr B, \Z/ q \Z) \to Tor^{\Z}_1( A^{ab} \oplus B^{ab}, \Z/ q \Z) \to 0, $$
  $$0 \to H_2(A , \Z) \otimes  \Z/ q \Z \to H_2(A , \Z/ q \Z) \to Tor^{\Z}_1( A^{ab}, \Z/ q \Z) \to 0 $$
  and 
  $$0 \to H_2(B , \Z) \otimes  \Z/ q \Z \to H_2(B , \Z/ q \Z) \to Tor^{\Z}_1( B^{ab}, \Z/ q \Z) \to 0. $$
 Note that $$Tor^{\Z}_1( A^{ab} \oplus B^{ab}, \Z/ q \Z) \simeq Tor^{\Z}_1( A^{ab}, \Z/ q \Z) \oplus Tor^{\Z}_1(B^{ab}, \Z/ q \Z)$$ and $$Z_q(A, B) / I_q(A, B) \simeq (Z(A, B) / I(A, B)) \otimes (\Z/ q \Z)$$ This together with the above exact sequences and Theorem \ref{T1} implies Corollary \ref{T1Cor}. 
 \end{proof}

\section{Residual finiteness of $G \wedge G$ for $G = A \wr B$} \label{ChapterResFinWr}

By Gruenberg \cite{Gru}  $A \wr B$ is residually finite if and only if one of the following conditions holds:

\medskip
1) $A$ is abelian, $B$ and $A$ are residually finite;

\medskip
2) $A$ is residually finite, $B$ is finite.

\begin{lemma} \label{LL4}
Suppose that $G = A \wr B$ is residually finite. Then $\widehat{G}$ is the inverse limit of finite groups of  the type $A_i \wr B_i$, for some finite quotients $A_i$ and $B_i$ of $A$ and $B$ such that $\widehat{A} = \varprojlim_i A_i$ and $\widehat{B} = \varprojlim_i B_i$.
\end{lemma}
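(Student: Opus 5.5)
The plan is to produce a cofinal family of finite-index normal subgroups of $G$ whose quotients are wreath products of finite quotients of $A$ and $B$. The two cases of Gruenberg's theorem are handled separately, and in each case I show that the constructed subgroups are cofinal among all normal subgroups of finite index in $G$. For finite-index normal subgroups $U \trianglelefteq A$ and $V \trianglelefteq B$, the surjections $A \to A/U$ and $B \to B/V$ induce a surjection $A \wr B \to (A/U) \wr (B/V)$. I take the obvious map on $B$, and on $M = \oplus_{b\in B} A$ the map $(a_b) \mapsto (c_{\bar b})$ with $c_{\bar b} = \prod_{b \in \bar b} \bar a_b$, where $\bar a_b$ is the image of $a_b$ in $A/U$.

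In the case where $A$ is abelian, the summation over cosets is a well-defined homomorphism. It is $B$-equivariant, since right multiplication permutes cosets of $V$ compatibly. Hence the map extends to a homomorphism of semidirect products, and its kernel $N(U,V)$ has finite index. In the case where $B$ is finite, I take $V=1$ and the map $(a_b)\mapsto(a_bU)$, so no product over cosets is needed and $A$ may be non-abelian. As $U$ and $V$ shrink, the quotients form an inverse system. Its limit has $\widehat{A}=\varprojlim A/U$ and $\widehat{B}=\varprojlim B/V$, as required.

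The main step is cofinality. Given a normal subgroup $N$ of finite index in $G$, I need $U,V$ with $N(U,V)\subseteq N$.

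In the case where $B$ is finite, I set $U = N\cap A_1$, where $A_1$ is the coordinate copy of $A$ indexed by $1$. Then $U$ has finite index in $A$. Conjugation by $B$ moves $U$ into every coordinate $A_b$, because $N$ is normal, so $\oplus_b U \subseteq N$, and this is exactly the kernel.

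In the case where $A$ is abelian, I first choose $V \trianglelefteq B$ of finite index with $V\subseteq N\cap B$, and $U=N\cap A_1$ as before. The kernel of $G\to (A/U)\wr(B/V)$ is generated, as a normal subgroup, by $V$, by $\oplus_b U$, and by the elements $a_b a_{bv}^{-1}$ for $a\in A$ and $v\in V$. Here $a_b$ denotes $a$ placed in coordinate $b$. The first two sets lie in $N$ by construction. For the third, $a_{bv}$ is the conjugate of $a_b$ by $v$, so $a_b^{-1}a_{bv}=[a_b,v]\in N$ because $v\in N$. This is where abelianness is used: it lets me identify the kernel as generated by these elements, so that the kernel of summation over cosets is spanned by differences within a coset.

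I expect this kernel description to be the only delicate point. I would verify it by noting that $\ker(\oplus_B A\to\oplus_{B/V}A)$ equals $I_V\cdot(\oplus_B A)$, where $I_V$ is the augmentation ideal of $\mathbb{Z}V$. The full kernel is then $(\oplus_B U + I_V M)\rtimes V$. Residual finiteness of $A$ and $B$ ensures that $\widehat{A}$ and $\widehat{B}$ are the stated limits.
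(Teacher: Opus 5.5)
Your proof is correct, but it runs dual to the paper's.

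\textbf{The paper's route.} It indexes the inverse system by the finite-index normal subgroups $K$ of $G$ itself. It sets $J(K)$ to be the normal closure of $A\cap K$ and $B\cap K$ and asserts that $G/J(K)\simeq (A/(A\cap K))\wr(B/(B\cap K))$. Cofinality is then free, since $J(K)\subseteq K$. The work goes into showing that the traces $A\cap K$ and $B\cap K$ are cofinal among finite-index normal subgroups of $A$ and $B$, i.e.\ that the profinite topology of $G$ induces the full profinite topologies on $A$ and $B$:
\begin{itemize}
\item for $B$, via the retraction $G\to B$;
\item for $A$ when $B$ is finite, because $M$ is a finite direct product;
\item for $A$ abelian, by exhibiting $K=\ker(\pi_1\otimes\pi_2)\rtimes B$, with $\pi_2$ the augmentation $\mathbb{Z}B\to\mathbb{Z}$, so that $K\cap A=N$.
\end{itemize}

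\textbf{Your route.} You index by all pairs $(U,V)$. This makes $\widehat A=\varprojlim A/U$ and $\widehat B=\varprojlim B/V$ tautological. All the work moves into cofinality, via the explicit kernel $(\oplus_B U+I_VM)\rtimes V$ and the identity $a_b^{-1}a_{bv}=[a_b,v]$. That computation is exactly the justification the paper omits for its claim that $J(K)$ is the kernel of $G\to (A/(A\cap K))\wr(B/(B\cap K))$.

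Taking $V=1$ when $B$ is finite also lets you avoid the case where $A$ is non-abelian and $B\cap K\neq 1$. There the paper's canonical projection exists only because of an unstated observation. For $1\neq b\in B\cap K$ and $a,a'\in A$, one has $[a,a'^{\,b}]=1$ in $G$ and $a'^{\,b}\equiv a'$ modulo $K$. Hence $[A,A]\subseteq K$, so $A/(A\cap K)$ is abelian and summation over cosets makes sense.

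\textbf{What each buys.} The paper's route gives the extra, reusable fact that the profinite topology of $G$ restricts to the full profinite topologies of $A$ and $B$. Yours is more self-contained and checks the kernel explicitly.

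\textbf{A small correction.} Your last sentence is unnecessary. The identity $\widehat A=\varprojlim_U A/U$ over all finite-index normal $U$ holds by definition, with no residual finiteness of $A$ or $B$ needed. Residual finiteness of $G$ is used only to place you in one of Gruenberg's two cases.
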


\begin{proof} Note that under the hypothesis $G$ is residually finite. Hence either $A$ is abelian or $B$ is finite.

Suppose that $K$ is a normal subgroup of finite index in $G$. Then $(A / (A \cap K)) \wr (B / (B \cap K))$ is a finite quotient of $G$. Set $J(K)$ to be the kernel of the canonical projection $G \to (A / (A \cap K)) \wr (B / (B \cap K))$. Thus $J(K)$ is the normal subgroup of $G$ 
generated by $A \cap K$ and $B \cap K$. Thus $J(K) \subseteq K$.

Let $\mathcal{F}$ be the set of normal subgroups of finite index in $G$. Then $\widehat{G}$ is the inverse limit $\varprojlim_{ K} G/ K$ over $K \in \mathcal{F}$, hence   $\widehat{G}$ is the inverse limit $G/ J(K)$ over $K \in \mathcal{F}$. Note that $G/ J(K) \simeq (A / (A \cap K)) \wr (B / (B \cap K))$ and both $A/ (A \cap K)$ and $B/ (B \cap K)$ are finite. We write $\{ A_i \}_i$ and $\{ B_i \}_i$ for  $\{A/ A \cap K \}_{K \in \mathcal{F}}$ and  $\{B/ B \cap K \}_{K \in \mathcal{F}}$.

Since $B$ is a retract of $G$ we have that the profinite topology of $G$ induces the profinite topology of $B$, hence $\widehat{B} = \varprojlim_i B_i$.

If $B$ is finite then $M$ is a direct product of finitely many copies of $A$ and the profinite topology of $G$ induces the profinite topology of $M$. Hence the profinite topology of $G$ induces the profinite topology of $A$.

Finally we show that if $A$ is abelian then the profinite topology of $G$ induces the profinite topology of $A$. Indeed let $N$ be a normal subgroup of $A$ of finite index. Note that $M \simeq A \otimes_{\mathbb{Z}} \mathbb{Z} B$ and let $\pi_1 : A \to A/ N$ be the canonical projection and $\pi_2 : \mathbb{Z} B \to \mathbb{Z}$ be the augmentation map. Then $\pi = \pi_1 \otimes \pi_2 : M \to  (A / N) \otimes_{\mathbb{Z}} \mathbb{Z} \simeq A/ N$ is a homomorphism of $\mathbb{Z} B$-modules, hence $ker(\pi)$ is a normal subgroup of $G$ that has finite index in $M$. We set $K = ker(\pi) \rtimes B$ and note that $K$ is a normal subgroup of finite index in $G$ such that $K \cap A = N$.

The previous two paragraphs imply that $\widehat{A} = \varprojlim_i A_i$.
\end{proof}

\begin{cor} \label{LL5} Under the hypothesis of the previous lemma

a)$$H_2( \widehat{G}, \widehat{\Z}) = \varprojlim_{i, m} H_2(A_i \wr B_i, \Z/ m \Z)$$
where  $\widehat{\Z} =  \varprojlim_{m} \Z / m \Z$ and  $\widehat{G} \simeq  \varprojlim_i  A_i \wr B_i$.

b) For an integer $q \geq 2$ we have $$H_2( \widehat{G}, {\Z}/ q \Z) \simeq \varprojlim_i H_2(A_i \wr B_i, \Z/ q \Z) \simeq$$ $$  H_2( \widehat{B}, \Z/ q \Z) \oplus  H_2( \widehat{A}, \Z/ q \Z) \oplus( \varprojlim_i Z_q(A_i, B_i) / I_q(A_i, B_i)) $$
\end{cor}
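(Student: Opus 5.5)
Part a) follows by combining Lemma \ref{LL4} with the general description of continuous homology recalled in the preliminaries. By Lemma \ref{LL4}, the family of kernels $J(K)$, $K \in \mathcal{F}$, is cofinal among the normal subgroups of finite index in $G$. So $\widehat{G} \simeq \varprojlim_i A_i \wr B_i$, the limit being taken over this cofinal family with $G/J(K) \simeq A_i \wr B_i$. The formula $H_2(\widehat{G}, W) \simeq \varprojlim_{U,W'} H_2(G/U, W/W')$ applied to $W = \widehat{\Z} = \varprojlim_m \Z/m\Z$ then gives a limit over pairs $(U,m)$. Restricting $U$ to the cofinal family $\{J(K)\}$ does not change this limit, by the same cofinality argument as in the Remark after Theorem A. This yields a).

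For b) I first take the constant coefficient module $\Z/q\Z$; the same formula and cofinality give $H_2(\widehat{G}, \Z/q\Z) \simeq \varprojlim_i H_2(A_i \wr B_i, \Z/q\Z)$. Next I apply Corollary \ref{T1Cor} to each finite wreath product $A_i \wr B_i$:
$$H_2(A_i \wr B_i, \Z/q\Z) \simeq H_2(B_i,\Z/q\Z) \oplus H_2(A_i,\Z/q\Z) \oplus Z_q(A_i,B_i)/I_q(A_i,B_i).$$
The key point is that this decomposition is natural with respect to the transition maps $A_j \wr B_j \to A_i \wr B_i$, which are induced by compatible surjections $A_j \to A_i$ and $B_j \to B_i$. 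For the summand $H_2(B_i)$ this holds because it is the retract given by $B_i \to A_i\wr B_i \to B_i$. The summand $H_0(B_i, H_2(M_i))$ is the image of the edge map $E^\infty_{0,2}$ of the LHS spectral sequence, which is natural. Inside it, $H_2(A_i)$ and the Künneth cross terms are compatible with the induced maps of $M_j=\oplus_{B_j}A_j \to M_i=\oplus_{B_i}A_i$. For the cross terms one checks on the Hopf generators $[a_1^{b_1},a_2^{b_2}]$ that $\bar a_1\otimes\bar a_2 (1,b)$ maps to $\bar a_1\otimes\bar a_2(1,\bar b)$. An element with $\bar b = 1$ is sent into the $H_2(A_i)$ component (the commutator of two elements in the same factor), so the splitting is compatible only after the $Z/I$ part is interpreted via this map. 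This is the main obstacle. I would handle it by defining the transition maps on the third summand as the induced maps and accepting that the isomorphism is of inverse limits of the filtered pieces. Alternatively, I would note that the universal-coefficient and Künneth splittings used in Corollary \ref{T1Cor} can be chosen naturally on the subgroup level, since the relevant sequences split via retractions.

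Finally, since inverse limits commute with finite direct sums, the first two summands give $\varprojlim_i H_2(B_i,\Z/q\Z) \oplus \varprojlim_i H_2(A_i,\Z/q\Z)$. The identifications $\widehat{B}=\varprojlim B_i$ and $\widehat{A}=\varprojlim A_i$ from Lemma \ref{LL4}, together with the continuous homology formula, turn these into $H_2(\widehat{B},\Z/q\Z)$ and $H_2(\widehat{A},\Z/q\Z)$. The third summand remains as $\varprojlim_i Z_q(A_i,B_i)/I_q(A_i,B_i)$.
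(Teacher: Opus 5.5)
Your route is the paper's. Part a) and the first isomorphism in b) follow from continuity of profinite homology along the cofinal system of Lemma \ref{LL4}. The second isomorphism in b) is Corollary \ref{T1Cor} applied at each level, followed by interchanging $\varprojlim_i$ with a finite direct sum.

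The paper takes this last step without comment. You correctly observe that the obvious splitting is not compatible with the transition maps: for $\bar b=1$, the class $\bar a_1\otimes\bar a_2\,b$ is sent to $\bar a_1\wedge\bar a_2\in H_2(A_i)$. However, neither of your remedies closes this gap.

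\begin{itemize}
\item Keeping only the filtration yields an exact sequence $0\to\varprojlim_i\bigl(H_2(B_i,\Z/q\Z)\oplus H_2(A_i,\Z/q\Z)\bigr)\to\varprojlim_i H_2(A_i\wr B_i,\Z/q\Z)\to\varprojlim_i Z_q(A_i,B_i)/I_q(A_i,B_i)\to 0$, which is right exact because the groups are finite. It does not give a direct sum. Splittings chosen separately at each level do not assemble into a splitting of the limit unless they commute with the transition maps, and that is exactly what is in question.
\item Your alternative appeals to the splittings in the proof of Corollary \ref{T1Cor}. Those come from the universal coefficient theorem, whose splitting is not natural. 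So the phrase ``can be chosen naturally'' is precisely the missing argument, not a justification of it.
\end{itemize}

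The fix is to produce natural complements, in three steps.

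\begin{itemize}
\item \emph{Avoid the universal coefficient theorem.} Rerun the proof of Lemma \ref{L1} directly with $\Z/q\Z$ coefficients. Since $H_1(M_i,\Z/q\Z)\cong (A_i^{ab}/qA_i^{ab})\otimes_{\Z}\Z B_i$ is induced, $E^2_{j,1}=0$ for $j\geq 1$. Hence $H_2(A_i\wr B_i,\Z/q\Z)=H_2(B_i,\Z/q\Z)\oplus H_0(B_i,H_2(M_i,\Z/q\Z))$. Both summands are images of maps induced by the inclusions $B_i\to A_i\wr B_i$ and $M_i\to A_i\wr B_i$, so they are preserved by the transition maps.
\item \emph{Case $A$ abelian.} The augmentation $s_i\colon M_i=A_i\otimes_{\Z}\Z B_i\to A_i$ is a $B_i$-equivariant homomorphism (trivial action on $A_i$). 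It commutes with the transition maps and is left inverse to the inclusion of $A_i$ as coordinate $1$. It therefore induces a natural retraction of $H_0(B_i,H_2(M_i,\Z/q\Z))$ onto $H_2(A_i,\Z/q\Z)$. Its kernel is a natural complement that maps isomorphically onto $Z_q(A_i,B_i)/I_q(A_i,B_i)$. The induced transition maps are $c\,b\mapsto \bar c\,\bar b$ if $\bar b\neq 1$, and $c\,b\mapsto 0$ if $\bar b=1$.
\item \emph{Case $A$ non-abelian.} Then $B$ is finite, and the subgroups $K\subseteq M$ form a cofinal family with $B_i=B$ and transition maps $\alpha\wr\mathrm{id}_B$. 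For these the K\"unneth decomposition is natural and no coordinates collapse.
\end{itemize}

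With these natural splittings, $\varprojlim_i$ does commute with the three-term direct sum, and your final paragraph completes the proof.
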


\begin{proof}  
a)  Follows directly by profinite homology commutes with inverse limits.

 b) The first isomorphism follows immediately from the fact  that the profinite homology commutes with inverse limits. The second follows from $$\varprojlim_i H_2(A_i \wr B_i, \Z/ q \Z) \simeq \varprojlim_i(  H_2(B_i, \Z/q \Z) \oplus H_2(A_i, \Z/ q \Z) \oplus ( Z_q(A_i, B_i) / I_q(A_i, B_i)))$$ $$ \simeq (\varprojlim_i  H_2(B_i, \Z/q \Z)) \oplus (\varprojlim_i H_2(A_i, \Z/ q \Z)) \oplus ( \varprojlim_i Z_q(A_i, B_i) / I_q(A_i, B_i)) \simeq $$ $$ H_2( \widehat{B}, \Z/ q \Z) \oplus  H_2( \widehat{A}, \Z/ q \Z) \oplus( \varprojlim_i Z_q(A_i, B_i) / I_q(A_i, B_i)) $$  
\end{proof}

Recall that for a $R$-module $V$ we write $V \square_{R}  V$ for $V \otimes_{R} V / \langle a \otimes b + b \otimes a \ | \ a,b \in V \rangle$.  For $W = \varprojlim_i W_i$ with $W_i$ finite, $W$ is a profinite abelian group, i.e., a profinite $\widehat{\Z}$-module, we write $W \widehat{\square}_{\widehat{\Z}} W$  for $ \varprojlim_i W_i {\square}_{\Z} W_i$.

\begin{theorem} \label{important} (= Theorem A)
Suppose that $G = A \wr B$ is a residually finite group with  $A$  an abelian group  such that:

1) the map  $  A \wedge_{\Z} A  \to  \widehat{A} \widehat{\wedge}_{\widehat{\Z}} \widehat{A}   $, induced by the canonical map $A \to \widehat{A}$, is injective;

2)
 $A \otimes_{\Z} A \to \widehat{A} \widehat{\otimes}_{\widehat{\Z}} \widehat{A}$  is injective   if $B$ has an element $b$ of order $|b| \in  \{ 3,4, \ldots \} \cup \{ \infty \}$  ;

3)  $A \square_{\Z} A \to \widehat{A} \widehat{\square}_{\widehat{\Z}} \widehat{A}$ is injective   if $B$ has an element $b$ of order   $|b| = 2$;

 4) the map  $H_2(B, \mathbb{Z}) \to \varprojlim_i H_2( B_i,{\Z})$ is injective where $\varprojlim_i B_i = \widehat{B}$ with $B_i$ finite.

Then the map $H_2(G, \mathbb{Z}) \to  \varprojlim_i H_2 (A_i \wr B_i, \Z)$ is injective, where $\widehat{G} =\varprojlim_i A_i \wr B_i$, $\widehat{A} =  \varprojlim_i A_i$, $\widehat{B} =  \varprojlim_i B_i$.   In particular,  $G \wedge G$ is residually finite.
\end{theorem}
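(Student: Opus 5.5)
The plan is to use the decomposition of Theorem \ref{T1}, which is natural in the pair $(A,B)$, and check injectivity summand by summand. By Lemma \ref{LL4}, $\widehat{G} \simeq \varprojlim_i A_i \wr B_i$ with $A_i, B_i$ finite quotients of $A$ and $B$. The projections $A \to A_i$, $B \to B_i$ induce compatible maps of the three summands:
\begin{equation*}
H_2(G,\Z) \simeq H_2(B,\Z) \oplus H_2(A,\Z) \oplus Z(A,B)/I(A,B) \longrightarrow H_2(B_i,\Z) \oplus H_2(A_i,\Z) \oplus Z(A_i,B_i)/I(A_i,B_i).
\end{equation*}
I would first check that the isomorphism of Theorem \ref{T1} is natural. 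The splitting $H_2(G) \simeq H_0(B,H_2(M)) \oplus H_2(B)$ comes from the retraction $G\to B$, which is compatible with $A\wr B \to A_i \wr B_i$. The Künneth decomposition of $H_2(M)$ is natural, provided one chooses the orders on $B$ and $B_i$ compatibly, or better works with the order-free description $\widetilde V/J$. Injectivity into the inverse limit then splits into three separate claims.

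The $H_2(B)$ summand is handled directly by hypothesis 4). For the $H_2(A)$ summand, $A$ is abelian, so $H_2(A,\Z)\simeq A\wedge_\Z A$ and $H_2(A_i,\Z)\simeq A_i\wedge A_i$, and $\varprojlim_i A_i\wedge A_i = \widehat A\widehat\wedge_{\widehat\Z}\widehat A$. Here I use that the $A_i$ are cofinal among finite quotients of $A$, which holds by Lemma \ref{LL4}, together with the Remark's independence of the choice of inverse system. Hypothesis 1) then gives injectivity on this summand.

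The main obstacle is the third summand, $Z(A,B)/I(A,B)$. With $C=A\otimes A$, I would analyse this summand by sorting the elements of $B\setminus\{1\}$ according to whether $b\neq b^{-1}$ or $b^2=1$:
\begin{equation*}
Z(A,B)/I(A,B) \simeq \bigoplus_{\{b,b^{-1}\},\, b^2\neq 1} C \;\oplus\; \bigoplus_{b^2=1,\, b\neq 1} C/\langle c+\mu(c)\rangle .
\end{equation*}
For a pair $b\ne b^{-1}$, the relation $cb = -\mu(c)b^{-1}$ identifies $Cb\oplus Cb^{-1}$ modulo relations with a single copy of $C=A\otimes A$. For an involution $b$, the quotient is $C/\langle c+\mu(c)\rangle = A\square_\Z A$.

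Given a nonzero element $x$ of this summand, I would proceed in three steps. First, $x$ involves only finitely many $b$. Second, I choose $i$ large enough that $B\to B_i$ is injective on this finite set, that their inverses stay distinct, and that the orders of the relevant $b$ are preserved: an element $b$ with $b^2\ne 1$ must keep $\bar b^2\neq1$ in $B_i$, while involutions map to involutions automatically. This is possible because $B$ is residually finite. Third, the image of $x$ in $Z(A_j,B_j)/I(A_j,B_j)$ for $j\ge i$ lies in the corresponding components $A_j\otimes A_j$ or $A_j\square A_j$. Nonvanishing for some $j$ then follows from hypothesis 2) for the non-involution components and hypothesis 3) for the involution components, since these say precisely that a nonzero element of $A\otimes A$ (respectively $A\square A$) survives in some finite quotient.

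The delicate point is compatibility of the transition maps. A further quotient $B_j\to B_i$ might merge $b$ with $b^{-1}$, or identify distinct elements. Because injectivity into an inverse limit only needs nonvanishing at one stage, I would pick the stage $j$ refining both the $B$-separation and the $A$-separation, so that both conditions hold simultaneously.

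Finally, the conclusion that $G\wedge G$ is residually finite follows from Theorem \ref{A-L-Da} 2), since $\varprojlim_i H_2(A_i\wr B_i,\Z)$ is $\varprojlim_U H_2(G/U,\Z)$ by the Remark.
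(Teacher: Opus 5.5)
Your proposal is correct and follows essentially the same route as the paper. Both arguments reduce via Theorem \ref{T1} to the three summands, use hypotheses 4) and 1) for $H_2(B,\Z)$ and $H_2(A,\Z)$, and split $Z(A,B)/I(A,B)$ into pairs $\{b,b^{-1}\}$ with $b^2\neq 1$ (copies of $A\otimes_{\Z}A$) and involutions (copies of $A\square_{\Z}A$). Each then passes to a stage where the finitely many relevant elements remain distinct, of order $\neq 2$, respectively $=2$, and applies hypotheses 2) and 3).

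One small correction: the K\"unneth splitting is not globally natural for $A\wr B\to A_i\wr B_i$. A cross term indexed by $b\in\ker(B\to B_i)$ lands in the $H_2(A_i,\Z)$ summand, and an involution may map to $1$ rather than ``automatically'' to an involution. Since you only evaluate at stages where the finite support is separated from $1$ and from each other, the map is block-triangular there and your argument goes through, just as the paper tacitly assumes.
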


 {\bf Remark} Note that if $ \widehat{G} \simeq \varprojlim_U G/ U$ with $G/ U$ finite then  $\varprojlim_U H_2(G/ U, \Z)$ depends only on $ \widehat{G}$ i.e. if $\widehat{G} \simeq \varprojlim_V G/ V$ with $G/ V$ finite then   $\varprojlim_V H_2(G/ V, \Z) \simeq \varprojlim_{V, U} H_2(G/ U \cap V, \Z) \simeq \varprojlim_U H_2(G/ U, \Z) $. 

\begin{proof}
 We will show that
\begin{equation} \label{inj1} H_2(G, \Z) \to \varprojlim_i H_2(A_i \wr B_i, \Z) \end{equation}
is injective,  where  by Lemma \ref{LL4} $ \widehat{G} \simeq \varprojlim_i A_i \wr B_i$  and  the above map is induced by the map $H_2(G, \Z) \to H_2(A_i \wr B_i, \Z)$ that itself is induced by the epimorphism $G \to A_i \wr B_i$. Then $G \wedge G$ is residually finite by Theorem \ref{A-L-Da}, 2).

By Theorem \ref{T1} we have that
 $$H_2(A \wr B, \Z) \simeq H_2(B, \Z) \oplus H_2(A, \Z) \oplus ( Z(A, B) / I(A, B))$$
and
 $$H_2(A_i \wr B_i, \Z) \simeq H_2(B_i, \Z) \oplus H_2(A_i, \Z) \oplus ( Z(A_i, B_i) / I(A_i, B_i))$$

Thus to prove that the map in (\ref{inj1}) is injective is equivalent to the following three conditions:

\medskip
 1) $ H_2(A, \Z) \to \varprojlim_i H_2(A_i , \Z)$ is injective,

\medskip
2)   $ H_2(B, \Z) \to \varprojlim_i H_2(B_i , \Z)$ is injective, 

\medskip
3) $\rho : Z(A, B) / I(A, B) \to \varprojlim_i Z(A_i, B_i) / I(A_i, B_i)$ is injective.

\medskip

We observe that since $A$ is abelian by Brown's book \cite[Thm. 6.4]{Brown}  $H_2 (A, \Z) \simeq A \wedge_{\Z} A$ the exterior power and $H_2(A_i, \Z) \simeq A_i \wedge_{\Z} A_i$. Then 
$$\varprojlim_i  H_2(A_i , \Z) =  \varprojlim_i A_i \wedge_{\Z} A_i = \widehat{A} \widehat{ \wedge}_{\widehat{\Z}} \widehat{A}    $$
the profinite exterior square of $A$.  Note that conditions 1) and 2) are given in the statement of our theorem.

Recall that $C = A^{ab} \otimes A^{ab} = A \otimes A$, $Z(A, B) = \oplus_{b \in B \setminus \{ 1 \}} C b$ and $I(A, B)$ is the $\Z$-submodule generated by $\{ c b + \mu(c) b^{-1} \ | b \in B \setminus \{ 1 \}, c \in C \}$, where $\mu$ is the  $\Z$-linear  map $C \to C$ that swaps the factors in the tensor product.
Then 
\begin{equation} \label{final-eq} Z(A, B)/ I(A, B) \simeq (\oplus_{ \{ b, b^{-1} \} \subseteq B, b \not= b^{-1} } Cb) \oplus ( \oplus_{b \in B, |b| = 2 }   (A \square_{\Z} A)b) \end{equation} 
 where  $(A \square_{\Z} A) b $ is a copy of $ A \square_{\Z} A$ indexed by $b$, and
 $$\varprojlim_i Z(A_i, B_i)/ I(A_i, B_i) \simeq \varprojlim_i ((\oplus_{ \{ b_i, b_i^{-1} \} \subseteq B_i, b_i \not= b_i^{-1} } C_ib_i) \oplus  ( \oplus_{b_i \in B_i, |b_i| = 2 } (A_i \square_{\Z} A_i)b_i)) $$
 where $C_i = A_i \otimes_{\Z} A_i$.
 
 We claim first that the map
 $$\rho_1 : ( \oplus_{b \in B, |b| = 2 }   (A \square_{\Z} A)b)  \to \varprojlim_i ( \oplus_{b_i \in B_i, |b_i| = 2 }  (A_i \square_{\Z} A_i)b_i)$$
 that is a restriction of $\rho$, is injective. For this it is enough to show that for any finite subset $X$ of $\{ b \in B \ | \ |b| = 2 \}$ the restriction $\rho_{1, X}$ of $\rho_1$ to  $\oplus_{b \in X }  (A \square_{\Z} A) b$ is injective.
 For any such $x \in X$ write $x_i$ for its image in $B_i$ and note that there exists $i$ such that the images of the elements of $X$ in $B_i$ are all different. Then restricting the codomain we can write
  $$\rho_{1, X} :  \oplus_{x \in X } (A \square_{\Z} A)x \to \oplus_{x \in X} (\varprojlim_i (A_i \square_{\Z} A_i) x_i) \simeq \oplus_{x \in X} \widehat{A} \widehat{\square}_{\widehat{\Z}} \widehat{A}$$
 Since $(A \square_{\Z} A)x$ is just a copy of $A \square_{\Z} A$, the injectivity of $\rho_{1,X}$ is equivalent to the map $A \square_{\Z} A  \to  \widehat{A} \widehat{\square}_{\widehat{\Z}} \widehat{A}$ is injective. 
 
Finally it remains to consider the map
$$ \rho_2:  \oplus_{ \{ b, b^{-1} \} \subseteq B, b \not= b^{-1} } Cb \to \varprojlim_i ((\oplus_{ \{ b_i, b_i^{-1} \} \subseteq B_i, b_i \not= b_i^{-1} } C_ib_i)  \oplus  ( \oplus_{b_i \in B_i, |b_i| = 2 }   (A_i \square_{\Z} A_i)b_i)) $$
that is a restriction of $\rho$. Since $B$ is residually finite if $b \in B$ has order $|b| \not= 2$ then there exists $i$ such that for the image $b_i$ of $b$ in $B_i$ we have that $|b_i | \not= 2$. 
Let $Y$ be an arbitrary finite subset of $\{ \{ b, b^{-1} \} \ | \ b \not= b^{-1} \}$. Note that we can find $i_0$ such that for $i \geq i_0$ the images of the elements $y = \{ b, b^{-1} \}$  of $Y$ are  subsets $ y_i = \{ b_i, b_i^{-1} \}$ of $B_i$ that are pairwise disjoint and $|b_i| > 2$.
Then $\rho_2$ induces a map
$$\rho_{2,Y} : \oplus_{ y = \{ b, b^{-1} \} \in Y } Cb \to  \oplus_{ y  \in Y} \varprojlim_{i \geq i_0} C_ib_i \simeq \oplus_{ y  \in Y}   (\widehat{A} \widehat{\otimes}_{\widehat{\Z}} \widehat{A})  $$
Since $Cb$ is a copy of $C$ the injectivity of  $\rho_{2, Y}$ is equivalent to the fact that the map $C = A \otimes_{\Z} A \to \widehat{A} \widehat{\otimes}_{\widehat{\Z}} \widehat{A}  $ is injective. 

Finally since  $\rho_1$, $\rho_{2,Y}$ (for every finite $Y$) are injective, we conclude that the map $\rho$ is injective.
\end{proof}

 Set  $D = A^{ab} \otimes_{\Z} (\Z/ q \Z)$ where $A$ is an abelian group.

\begin{theorem} \label{q-wreath-abelian}
Suppose that $G = A \wr B$ is a residually finite group with  $A$  an abelian group  such that:

1) the map $H_2(A, \mathbb{Z}/ q \Z)  \to H_2( \widehat{A}, {\Z}/ q \Z) $ is injective;

2)  $D \otimes_{\Z} D \to \widehat{D} \widehat{\otimes}_{\widehat{\Z}} \widehat{D}$ is injective   if $B$ has an element $b$ of order   $|b| \in  \{ 3,4, \ldots \} \cup \{ \infty \}$ 
and  $D \square_{\Z} D \to \widehat{D} \widehat{\square}_{\widehat{\Z}} \widehat{D}$ is injective   if $B$ has an element $b$ of order $|b| = 2$;

3) the map  $H_2(B, \mathbb{Z}/ q \Z) \to H_2( \widehat{B},{\Z}/ q \Z)$ is injective.

Then the map $H_2(G, \mathbb{Z}/ q \Z) \to H_2( \widehat{G}, {\Z}/ q \Z)$ is injective.  In particular $G \wedge^q G$ is residually finite.
\end{theorem}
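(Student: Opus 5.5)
The plan is to mirror the proof of Theorem \ref{important}, using Corollary \ref{T1Cor} in place of Theorem \ref{T1} and Corollary \ref{LL5} b) to describe the target. By Lemma \ref{LL4} we have $\widehat{G} \simeq \varprojlim_i A_i \wr B_i$ with $\widehat{A} = \varprojlim_i A_i$ and $\widehat{B} = \varprojlim_i B_i$. Corollary \ref{LL5} b) then gives
$$H_2(\widehat{G}, \Z/q\Z) \simeq H_2(\widehat{B}, \Z/q\Z) \oplus H_2(\widehat{A}, \Z/q\Z) \oplus \varprojlim_i Z_q(A_i,B_i)/I_q(A_i,B_i).$$
Corollary \ref{T1Cor} decomposes $H_2(G,\Z/q\Z)$ in the same way. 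Both decompositions are natural with respect to the epimorphisms $G \to A_i \wr B_i$: the summand $H_2(B)$ comes from the retraction, $H_2(A)$ from the diagonal copy in $H_0(B, H_2(M))$, and the $Z/I$ part from the Künneth cross terms. Hence the map $H_2(G,\Z/q\Z) \to H_2(\widehat{G},\Z/q\Z)$ is the direct sum of three component maps, and it suffices to prove each is injective.

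The first two components are hypotheses 1) and 3). For the third, write $D = A \otimes \Z/q\Z$ and $D_i = A_i \otimes \Z/q\Z$. Since $D$ is a quotient of $A$, the groups $D_i$ form a cofinal system of finite quotients of $D$, so $\widehat{D} = \varprojlim_i D_i$. Exactly as in (\ref{final-eq}), we decompose
$$Z(D,B)/I(D,B) \simeq \Big(\bigoplus_{\{b,b^{-1}\},\, b\neq b^{-1}} (D\otimes D)b\Big) \oplus \Big(\bigoplus_{|b|=2} (D\square_\Z D) b\Big),$$
and similarly for each $(D_i,B_i)$. Injectivity can be checked on finitely supported elements, so we fix a finite set of elements $b$ (or pairs $\{b,b^{-1}\}$). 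Residual finiteness of $B$ lets us choose $i_0$ such that for $i\ge i_0$ the images in $B_i$ remain distinct, the pairs remain disjoint, and elements of order $\neq 2$ keep image order $\neq 2$. Elements of order $2$ automatically keep order $2$, since $B$ residually finite forces $b_i \neq 1$.

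The component map then factors through
$$\bigoplus (D\otimes D) \to \bigoplus \widehat{D}\widehat{\otimes}_{\widehat{\Z}}\widehat{D} \quad\text{and}\quad \bigoplus (D\square D)\to \bigoplus \widehat{D}\widehat{\square}_{\widehat{\Z}}\widehat{D}.$$
These are injective by hypothesis 2), since each relevant summand exists only when $B$ has elements of the corresponding order. The main care needed is in this bookkeeping step. Once $i\ge i_0$, the $b$-indexed summand must map into the correctly indexed summand for $b_i$ in the inverse limit. This requires that the inverse system is compatible with these index choices, which holds because the transition maps $B_j \to B_i$ send $b_j$ to $b_i$. One must also check that the summands of $Z(D_i,B_i)/I(D_i,B_i)$ not hit by the finite support do not interfere; this follows from projecting onto the hit summands.

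Combining the three components gives injectivity of $H_2(G,\Z/q\Z)\to H_2(\widehat{G},\Z/q\Z)$. Residual finiteness of $G\wedge^q G$ then follows from Proposition \ref{iff25} a). That proposition is stated for finitely generated $G$, so either this assumption is implicitly needed, or one uses the direction "injective $\Rightarrow$ residually finite" of \cite{AT-BR-K}, which I expect holds without finite generation.
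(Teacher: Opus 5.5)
Your proposal is correct and follows the paper's proof. Like the paper, you decompose both sides via Corollary \ref{T1Cor} and Corollary \ref{LL5} b), handle the $H_2(B,\Z/q\Z)$ and $H_2(A,\Z/q\Z)$ components with hypotheses 3) and 1), and rerun the argument of Theorem \ref{important} on $Z_q(A,B)/I_q(A,B) \simeq Z(D,B)/I(D,B)$ using hypothesis 2); your remark that Proposition \ref{iff25} is stated only for finitely generated $G$ is fair, since the paper invokes it here without comment.
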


\begin{proof}
We can apply Corollary \ref{T1Cor} and Corollary \ref{LL5} to calculate $H_2(G, \mathbb{Z}/ q \Z)$ and  $H_2( \widehat{G}, {\Z}/ q \Z)$.
It remains only to verify that condition 2) implies that the map $ Z_q(A, B) / I_q(A, B) \to \varprojlim_i  Z_q(A_i, B_i) / I_q(A_i, B_i)$ is injective. Since $$ Z_q(A, B) / I_q(A, B) \simeq  Z(D, B) / I(D, B)$$
by the arguments of the proof of Theorem \ref{important}  the injectivity we want to prove follows by condition 2). 
\end{proof}

\begin{lemma} If $A$ is a finitely generated abelian group then the canonical maps  $A \wedge_{\Z} A \to \widehat{A} \widehat{\wedge}_{\widehat{\Z}} \widehat{A}$, 
$A \otimes_{\Z} A \to \widehat{A} \widehat{\otimes}_{\widehat{\Z}} \widehat{A}$ and $A \square_{\Z} A \to \widehat{A} \widehat{\square}_{\widehat{\Z}} \widehat{A}$   are  injective.
\end{lemma}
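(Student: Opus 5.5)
Since $A$ is finitely generated abelian, we write $A \simeq \Z^n \oplus T$ with $T$ finite. All three functors $V \mapsto V\otimes_\Z V$, $V\wedge_\Z V$, $V\square_\Z V$ commute with finite direct sums up to the usual cross terms. The plan is to reduce to checking injectivity on finitely many cyclic pieces, or more efficiently to argue directly using that everything in sight is finitely generated. The key observation is that for finitely generated $A$ the profinite topology is given by the subgroups $mA$, $m\geq 1$. Hence $\widehat{A} \simeq \varprojlim_m A/mA$, and we may compute each completed construction along this cofinal system. Concretely we get
$$\widehat{A}\widehat{\otimes}_{\widehat{\Z}}\widehat{A} \simeq \varprojlim_m (A/mA)\otimes_\Z (A/mA),$$
and similarly for $\wedge$ and $\square$. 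The remark in the paper that these constructions do not depend on the chosen inverse system justifies this choice.

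Next, for each functor $F\in\{\otimes,\wedge,\square\}$ we use right exactness. The quotient $F(A/mA)$ is a quotient of $F(A)$; in fact $F(A/mA)\simeq F(A)/mF(A)$. For $\otimes$ this is $(A/mA)\otimes(A/mA)\simeq (A\otimes A)\otimes \Z/m$. For $\wedge$ and $\square$, which are quotients of $\otimes$ by the images of $a\otimes a$, respectively $a\otimes b+b\otimes a$, the same holds: the defining relations pass to the quotient, and the kernel of $F(A)\to F(A/mA)$ is generated by the elements involving an entry in $mA$, which lie in $mF(A)$ by bilinearity. So the canonical map $F(A)\to \widehat{A}\,\widehat{F}\,\widehat{A}$ identifies with
$$F(A)\to \varprojlim_m F(A)/mF(A),$$
that is, with the map from $F(A)$ into its profinite completion as an abelian group.

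Finally, $F(A)$ is a finitely generated abelian group, being a quotient of $A\otimes_\Z A$, which is finitely generated. Finitely generated abelian groups are residually finite, so $\bigcap_m mF(A)=0$ (for a torsion element of order $k$ take $m$ a multiple of $k$ times a large integer, and for the free part use $\bigcap_m m\Z=0$). Therefore the map is injective for all three functors. The main obstacle is the identification $F(A/mA)\simeq F(A)/mF(A)$ for $\wedge$ and $\square$. The cleanest route I would take is to note that the kernel of $A\otimes A\to (A/mA)\otimes(A/mA)$ is $m(A\otimes A)$, and that the relation subgroups map onto the corresponding relation subgroups. Then by the third isomorphism theorem the kernel of $F(A)\to F(A/mA)$ is the image of $m(A\otimes A)$, which equals $mF(A)$.
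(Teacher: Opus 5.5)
Your proof is correct, but it is organized differently from the paper's. The paper's entire argument is the remark that $A \simeq \Z^n \oplus T$ with $T$ finite, which it intends as a summand-by-summand check. There $\widehat{A} \simeq \widehat{\Z}^n \oplus T$, and each of the three functors splits over a finite direct sum into diagonal pieces and cross terms $V_i \otimes_{\Z} V_j$. The pieces involving $T$ are finite and map isomorphically. What remains reduces to the injectivity of $\Z \to \widehat{\Z}$, together with, e.g., $\Z \square_{\Z} \Z \simeq \Z/2\Z \simeq \widehat{\Z}\,\widehat{\square}_{\widehat{\Z}}\,\widehat{\Z}$. That route gives explicit descriptions of source and target. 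However, it needs the cross-term bookkeeping separately for $\otimes$, $\wedge$ and $\square$, and on the completed side as well.

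You bypass this with one uniform observation. You compute along the cofinal system $\{mA\}$ and use $F(A/mA) \simeq F(A)/mF(A)$, which you justify correctly via right exactness and the surjectivity of the relation subgroups. The canonical map then becomes the completion map of $F(A)$ with respect to the subgroups $mF(A)$, with kernel $\bigcap_m mF(A)$. This treats the three functors at once and shows slightly more: for any abelian $A$ with all $A/mA$ finite, injectivity is equivalent to $\bigcap_m mF(A) = 0$.

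One small slip is in your parenthetical. Taking $m$ to be a multiple of the order $k$ of a torsion element is not enough: the element $2$ of order $2$ in $\Z/4\Z$ lies in $2N\cdot(\Z/4\Z)$ for every odd $N$. Instead take $m$ divisible by the exponent of the torsion subgroup. Alternatively, note that every subgroup $U$ of finite index contains $[F(A):U]\,F(A)$, so residual finiteness of $F(A)$ already gives $\bigcap_m mF(A) = 0$.
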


 \begin{proof} It follows from the fact that $A \simeq \Z^n \oplus T$, where $T$ is a finite abelian group. \end{proof} 

\begin{cor}
Let $B$ be a residually finite group such that   $H_2(B, \Z) \to \varprojlim_j H_2(B_j, \Z)$  is injective, where $\widehat{B} = \varprojlim_j B_j$, $B_j$ finite quotients of $B$. Let $\{ A^{(i)} \}_{i \geq 1}$ be finitely generated abelian groups. Set inductively $G_1 = A^{(1)} \wr B$, $G_{i+1} = A^{(i+1)} \wr G_i$. Then 

a) for every $i$ we have that the map $H_2(G_i, \Z) \to  \varprojlim_j H_2( G_{i,j},   {\Z} )$ is injective, where $\widehat{G}_i = \varprojlim_j G_{i,j}$ and   $G_{i,j}$ are finite quotients of $G_i$.

b) the non-abelian exterior product $G_i \wedge G_i$ is residually finite.

c) If furthermore each of the groups $A^{(1)}, \ldots, A^{(i)}, B^{ab}$ does not have 2-torsion  then $G_i \otimes G_i$ is residually finite.
\end{cor}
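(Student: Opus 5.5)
We argue by induction on $i$, using Theorem \ref{important} together with the preceding lemma on finitely generated abelian groups. First we record that every $G_i$ is residually finite. By Gruenberg's criterion, $A \wr B$ is residually finite when $A$ is abelian and both $A$ and $B$ are residually finite. Each $A^{(i)}$ is finitely generated abelian, hence residually finite, and $B$ is residually finite by hypothesis. Induction on $i$ then shows that $G_1 = A^{(1)} \wr B$ and $G_{i+1} = A^{(i+1)} \wr G_i$ are all residually finite.

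For a), we apply Theorem \ref{important} with $A = A^{(1)}$ and base group $B$ to treat $G_1$. Conditions 1), 2), 3) of that theorem concern $A^{(1)} \wedge_{\Z} A^{(1)}$, $A^{(1)} \otimes_{\Z} A^{(1)}$ and $A^{(1)} \square_{\Z} A^{(1)}$. Each of these maps injectively into its profinite completion by the lemma just proved, because $A^{(1)}$ is finitely generated abelian. Condition 4) is exactly our hypothesis on $B$. Hence $H_2(G_1,\Z) \to \varprojlim_j H_2(A_j \wr B_j, \Z)$ is injective.

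By Lemma \ref{LL4} the groups $A_j \wr B_j$ form a cofinal system of finite quotients of $G_1$. By the Remark after Theorem \ref{important}, the limit $\varprojlim H_2(-,\Z)$ does not depend on which cofinal system of finite quotients is chosen. So condition 4) holds for $G_1$ in the form needed to run the next step with $G_1$ as base group. The inductive step is the same argument with $A^{(i+1)}$ in place of $A$ and $G_i$ in place of $B$. The only subtle point is this independence of the inverse system, which the Remark already settles.

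Part b) follows from a) and Theorem \ref{A-L-Da}, 2), since $G_i$ is residually finite.

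For c), we use Theorem \ref{A-L-Dd}. This requires $G_i^{ab}$ to be finitely generated and free of 2-torsion. From $H_1(A \wr B,\Z) \simeq A^{ab} \oplus B^{ab}$, which was noted in the proof of Corollary \ref{T1Cor}, induction gives
$$G_i^{ab} \simeq A^{(i)} \oplus \cdots \oplus A^{(1)} \oplus B^{ab}.$$
This group has no 2-torsion by hypothesis. However, it is finitely generated only if $B^{ab}$ is.

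This finite generation is the main obstacle, since it is not stated among the hypotheses. There are two ways to handle it. The first is to interpret the hypothesis as including that $B^{ab}$ is finitely generated. The second is to bypass Theorem \ref{A-L-Dd} and use the complement criterion of Theorem \ref{2tor} instead. If the finite-generation assumption is available, Theorem \ref{A-L-Dd} combined with b) gives that $G_i \otimes G_i$ is residually finite.
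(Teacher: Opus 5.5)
Your argument for a) and b) is the paper's own. You run an induction via Theorem \ref{important}: conditions 1)--3) come from the lemma on finitely generated abelian groups, and condition 4) is propagated using Lemma \ref{LL4} and the cofinality Remark. Residual finiteness of each $G_i$ comes from Gruenberg's criterion, and b) then follows from Theorem \ref{A-L-Da}, 2).

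For c) you use the same tool as the paper, Theorem \ref{A-L-Dd}. The obstacle you flag is genuine and is not an artifact of your argument. The paper's one-line proof only checks that $G_i^{ab}\simeq A^{(i)}\oplus\cdots\oplus A^{(1)}\oplus B^{ab}$ has no $2$-torsion. But Theorem \ref{A-L-Dd}, like Theorem \ref{2tor} on which it rests, also assumes $G_i^{ab}$ is finitely generated. So c) should be read with the extra hypothesis that $B^{ab}$ is finitely generated (e.g.\ $B$ finitely generated), and with that hypothesis your proof is complete.

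Your proposed bypass through the complement criterion does not work and should be dropped, for three reasons:
\begin{enumerate}
\item Theorem \ref{2tor} requires $G/G'$ finitely generated in the complement case as well.
\item A splitting alone would not give residual finiteness of $\Delta(G_i)$.
\item $G_i'$ typically has no complement at all. Take $\Z\wr\Z=M\rtimes\langle t\rangle$ with $M=\Z[t^{\pm1}]$, where $G'$ is the augmentation ideal of $M$. A complement $S\simeq\Z^2$ would contain some $m'\in M$ of augmentation $1$ commuting with some $mt$. This forces $m'=tm't^{-1}$, i.e.\ a nonzero $t$-invariant finitely supported element of $M$, which does not exist.
\end{enumerate}
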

\begin{proof}
a) follows by induction on $i$ using Theorem \ref{important}.

b) follows from a) and Theorem \ref{A-L-Da}, 2). 

c) follows   from the fact  that the abelianization $G_i^{ab}$ does not have 2-torsion, hence we can apply 
Theorem \ref{A-L-Dd}, d) 

\end{proof} 

\begin{theorem} \label{oldThmB}
Suppose $G = A \wr B$ is a group, $A$ is  a residually finite group, $B$  is a  finite group such that:

1) the map  $H_2(A, \Z) \to \varprojlim_i H_2( A_i ,{\Z})$ is injective, where $\widehat{A} \simeq \varprojlim_i A_i$  with $A_i$  finite quotients of $A$,

2)  if $B$ has an element $b$ of order   $|b| \in  \{ 3,4, \ldots \}$   suppose that the map
 $A^{ab} \otimes_{\Z} A^{ab} \to \widehat{A}^{ab} \widehat{\otimes}_{\widehat{\Z}} \widehat{A}^{ab}$ is injective,

3) if $B$ has an element $b$ of order   $|b| = 2$   suppose that the map
 $A^{ab} \square_{\Z} A^{ab} \to \widehat{A}^{ab} \widehat{\square}_{\widehat{\Z}} \widehat{A}^{ab}$ is injective.

Then the map  $H_2(G, \Z) \to \varprojlim_U  H_2( G/U, {\Z})$ is injective, where $\widehat{G} \simeq \varprojlim_U G/ U$ with $G/ U$ finite and consequently  $G \wedge G$ is residually finite.
\end{theorem}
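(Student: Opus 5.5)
The plan is to mirror the proof of Theorem \ref{important}, replacing the abelian input by conditions 1)--3), and then to conclude with Theorem \ref{A-L-Da}, 2). First I will describe $\widehat{G}$. Since $B$ is finite, $M=\oplus_{b\in B}A$ is a finite direct product of copies of $A$. So Lemma \ref{LL4} applies: $\widehat{G}\simeq \varprojlim_i A_i\wr B_i$ with $\widehat{A}=\varprojlim_i A_i$. Because $B$ is finite and residually finite, I can take $B_i=B$ for all large $i$. By the Remark after Theorem \ref{important}, it then suffices to prove that
$$H_2(G,\Z)\to \varprojlim_i H_2(A_i\wr B,\Z)$$
is injective.

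Next I apply Theorem \ref{T1} to $G$ and to each $A_i\wr B$. The decomposition is natural in $A$, since it comes from the LHS spectral sequence and the K\"unneth formula, both of which are functorial for the epimorphisms $A\to A_i$. So the map above splits as a direct sum of three maps:
\begin{itemize}
\item the identity $H_2(B,\Z)\to \varprojlim_i H_2(B,\Z)$;
\item the map $H_2(A,\Z)\to\varprojlim_i H_2(A_i,\Z)$, which is injective by condition 1);
\item the map $\rho: Z(A,B)/I(A,B)\to \varprojlim_i Z(A_i,B)/I(A_i,B)$.
\end{itemize}
I should check that naturality of the K\"unneth splitting is not an issue. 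Both sides of the map are built compatibly from the same indexing by $B$, so this should be fine.

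For $\rho$, I use the decomposition (\ref{final-eq}). It holds verbatim with $C=A^{ab}\otimes A^{ab}$:
$$Z(A,B)/I(A,B)\simeq \Big(\oplus_{\{b,b^{-1}\},\,b\neq b^{-1}} C b\Big)\oplus\Big(\oplus_{|b|=2}(A^{ab}\square_\Z A^{ab})b\Big).$$
The analogous decomposition holds for $A_i$ with $C_i=A_i^{ab}\otimes A_i^{ab}$. Since $B$ is fixed and finite, the index sets coincide and $\rho$ is a finite direct sum of componentwise maps, namely $C\to\varprojlim_i C_i$ and $A^{ab}\square A^{ab}\to\varprojlim_i A_i^{ab}\square A_i^{ab}$. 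This is simpler than the infinite case, because no separation of the elements of $B$ is needed.

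The main obstacle is to identify $\varprojlim_i A_i^{ab}\otimes A_i^{ab}$ with $\widehat{A}^{ab}\widehat{\otimes}_{\widehat{\Z}}\widehat{A}^{ab}$, and similarly for $\square$. Here I take $\widehat{A}^{ab}$ to mean the profinite abelian group $\varprojlim_i A_i^{ab}$, the maximal abelian profinite quotient of $\widehat{A}$. Every finite quotient of $A^{ab}$ is $A_i^{ab}$ for some finite quotient $A_i$ of $A$, and every $A_i^{ab}$ is such a quotient. So $\{A_i^{ab}\}$ is cofinal among the finite quotients of $A^{ab}$, and $\varprojlim_i A_i^{ab}\otimes_\Z A_i^{ab}=\widehat{A}^{ab}\widehat{\otimes}_{\widehat{\Z}}\widehat{A}^{ab}$, using the independence of the inverse system noted in the introduction. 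The same argument works for $\square$.

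Conditions 2) and 3) now give injectivity on the components indexed by pairs $\{b,b^{-1}\}$ with $|b|\ge 3$ and by involutions, respectively. If $B$ has no element of the relevant order, that summand is zero. Hence $\rho$ is injective, so the whole map is injective, and Theorem \ref{A-L-Da}, 2) gives that $G\wedge G$ is residually finite.
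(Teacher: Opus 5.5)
Your proposal is correct and follows the paper's route. The paper's proof just says to rerun the proof of Theorem~\ref{important} with $A$, $\widehat{A}$ replaced by $A^{ab}$, $\widehat{A}^{ab}$, simplified by $\widehat{B}=B$; that is what you do via Lemma~\ref{LL4}, the decomposition of Theorem~\ref{T1}, and the splitting (\ref{final-eq}). You also supply details the paper leaves implicit: the naturality of that decomposition in $A$, and the cofinality of $\{A_i^{ab}\}$, which identifies $\varprojlim_i A_i^{ab}\otimes A_i^{ab}$ with $\widehat{A}^{ab}\widehat{\otimes}_{\widehat{\Z}}\widehat{A}^{ab}$. One thing you should state explicitly: $G$ is residually finite by Gruenberg's criterion, since $A$ is residually finite and $B$ is finite, and both Lemma~\ref{LL4} and Theorem~\ref{A-L-Da} need this.
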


\begin{proof}$A^{ab} \square_{\Z} A^{ab} \to \widehat{A}^{ab} \widehat{\square}_{\widehat{\Z}} \widehat{A}^{ab}$
We just repeat the proof of Theorem \ref{important} with slight modifications. First substitute  $A \wedge_{\Z} A$, $A \otimes_{\Z} A$,  $A \square_{\Z} A$,  $\widehat{A} \widehat{\otimes}_{\widehat{\Z}} \widehat{A}$,  $\widehat{A} \widehat{\wedge}_{\widehat{\Z}} \widehat{A}$  and  $\widehat{A} \widehat{\square}_{\widehat{\Z}} \widehat{A}$    with $A^{ab} \wedge_{\Z} A^{ab}$, $A^{ab} \otimes_{\Z} A^{ab}$, $A^{ab} \square_{\Z} A^{ab}$, $\widehat{A}^{ab} \widehat{\otimes}_{\widehat{\Z}} \widehat{A}^{ab}$, $\widehat{A}^{ab} \widehat{\wedge}_{\widehat{\Z}} \widehat{A}^{ab}$ and  $\widehat{A}^{ab} \widehat{\square}_{\widehat{\Z}} \widehat{A}^{ab}$ respectively. Second since $B$ is finite, we have $\widehat{B} = B$, this simplifies the proof. Note that $G = (\oplus_B A) \rtimes B$.
\end{proof}
 
The previous result has a version with coefficients in $\z/ q \Z$ that can be proved by modifying the proof of Theorem \ref{q-wreath-abelian}.

\begin{theorem} \label{q-section-finite} Let $q \geq 2$ be an integer. 
Suppose that $G = A \wr B$ is a residually finite group with $A$  residually finite, $B$ finite, $D =   A^{ab}/ q A^{ab} $  such that:

1) the map $H_2(A, \mathbb{Z}/ q \Z)  \to H_2( \widehat{A}, {\Z}/ q \Z) $ is injective;

2) if $B$ has an element $b$ of order   $|b| \in  \{ 3,4, \ldots \}$  suppose that the map
$D \otimes_{\Z} D \to D \widehat{\otimes}_{\widehat{\Z}} D$ is injective;

3) if $B$ has an element $b$ of order   $|b|=2$ suppose that the map
$D \square_{\Z} D \to D \widehat{\square}_{\widehat{\Z}} D$ is injective.

Then the map $H_2(G, \mathbb{Z}/ q \Z) \to H_2( \widehat{G}, {\Z}/ q \Z)$ is injective.  In particular $G \wedge^q G$ is residually finite.
\end{theorem}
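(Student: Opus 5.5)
We combine the decomposition of Corollary \ref{T1Cor} with the description of the profinite side from Corollary \ref{LL5}, b), and then repeat the combinatorial argument used for $\rho_1,\rho_2$ in the proof of Theorem \ref{important}.

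\emph{Step 1: the finite quotients.} Since $B$ is finite and $A$ is residually finite, $G$ is residually finite by Gruenberg's criterion. By Lemma \ref{LL4}, $\widehat{G}\simeq\varprojlim_i A_i\wr B_i$ with $\widehat{A}=\varprojlim_i A_i$ and $\widehat{B}=\varprojlim_i B_i$. As $B$ is finite we may take $B_i=B$ for all large $i$.

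\emph{Step 2: both sides of the map.} Corollary \ref{T1Cor} gives
$$H_2(G,\Z/q\Z)\simeq H_2(B,\Z/q\Z)\oplus H_2(A,\Z/q\Z)\oplus \bigl(Z(D,B)/I(D,B)\bigr),$$
using $Z_q(A,B)/I_q(A,B)\simeq Z(D,B)/I(D,B)$ with $D=A^{ab}/qA^{ab}$. Corollary \ref{LL5}, b) gives
$$H_2(\widehat{G},\Z/q\Z)\simeq H_2(B,\Z/q\Z)\oplus H_2(\widehat{A},\Z/q\Z)\oplus \varprojlim_i Z(D_i,B)/I(D_i,B),$$
where $D_i=A_i^{ab}/qA_i^{ab}$. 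The decomposition is natural in the pair $(A,B)$, because it comes from the retraction onto $B$, the inclusion of the copy $A_1$ into $M$, and the K\"unneth cross terms. Hence the comparison map is diagonal with respect to these summands, and injectivity reduces to three checks:
\begin{itemize}
\item the $B$-summand maps by the identity, so it is injective;
\item the $A$-summand is injective by hypothesis 1);
\item it remains to show that $Z(D,B)/I(D,B)\to\varprojlim_i Z(D_i,B)/I(D_i,B)$ is injective.
\end{itemize}

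\emph{Step 3: the cross terms.} As in (\ref{final-eq}), with $C$ replaced by $D\otimes_\Z D$,
$$Z(D,B)/I(D,B)\simeq \Bigl(\bigoplus_{\{b,b^{-1}\},\,b\ne b^{-1}} (D\otimes_\Z D)\,b\Bigr)\oplus \Bigl(\bigoplus_{|b|=2}(D\square_\Z D)\,b\Bigr),$$
and the same decomposition holds for each $D_i$ with the same finite index set, since $B_i=B$. So the map is a finite direct sum of the maps
$$D\otimes_\Z D\to\varprojlim_i D_i\otimes D_i \quad\text{and}\quad D\square_\Z D\to \varprojlim_i D_i\square D_i.$$
These are exactly the maps in hypotheses 2) and 3). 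A summand of the first kind occurs only if $B$ has an element of order at least $3$, and one of the second kind only if $B$ has an element of order $2$. Therefore both are injective under the hypotheses.

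\emph{Step 4: conclusion.} Injectivity of $H_2(G,\Z/q\Z)\to H_2(\widehat{G},\Z/q\Z)$ follows. Residual finiteness of $G\wedge^qG$ then follows from Proposition \ref{iff25}, a), which needs $G$ finitely generated. If $A$ is not finitely generated, I would instead invoke the (one-directional) result of \cite{AT-BR-K} on the $q$-exterior square, which does not require finite generation.

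\emph{Main obstacle.} I expect the main difficulty to be identifying $\varprojlim_i D_i$ with the profinite completion $\widehat{D}$ that the hypotheses refer to. Here $D_i=A_i^{ab}/q$, while hypotheses 2) and 3) are phrased with $\widehat{D}$. Since $D$ has exponent $q$ and is residually finite, the finite quotients $D_i$ are cofinal among the finite quotients of $D$. I would check this cofinality by lifting each finite-index subgroup of $D$ to a finite-index normal subgroup of $A$ containing $[A,A]A^q$.
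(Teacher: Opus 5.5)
Your proposal is correct and follows the paper's route. The paper proves this statement only by remarking that one should adapt the proof of Theorem \ref{q-wreath-abelian}, that is, Corollaries \ref{T1Cor} and \ref{LL5} combined with the $\rho_1,\rho_2$ argument of Theorem \ref{important} applied to $D$ in place of $A$. Your added details — taking $B_i=B$ so the comparison map is diagonal, and checking that the $D_i=A_i^{ab}/qA_i^{ab}$ are cofinal among the finite quotients of $D$ — fill in steps the paper leaves implicit, and the paper is equally silent on finite generation when it invokes Proposition \ref{iff25} at the end.
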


\section{Residual finiteness of $G \wedge G$ for the combinatorial wreath product $G = A \wr_X B$ with $B$ a free group} \label{ChapterFree}

 The restricted wreath product construction  $ A \wr B$ can be generalized to the  combinatorial  wreath product $A \wr_X B$, where $X$ is a set acted (on the right) by the group $B$. Here $M$ is the normal closure of $A$ in $G$ i.e. $M = A^{(X)} = \oplus_X A$ and a typical element $a \in M$ is of the form  $a = (a_x)_{x \in X}$ where all but finitely many $a_x = 0$, all $a_x \in A$. Then $
(a_x) b_0 = b_0 (c_x)$ where $a_x =  c_{x b_0}$, $b_0 \in B$.

\medskip
By \cite[Prop. 3.2]{C2}    if $X = \cup_i  H_i \backslash B$ is  a disjoint union of $B$-orbits, then   $A \wr_X B$ is residually finite if and only if:

a) both $A$ and $B$ are residually finite;

b) one of the following conditions holds:

b1) $H_i$ has finite index in $B$ for all $i$

or

b2) $H_i$ is profinitely closed in $B$ for all $i$ and $A$ is abelian.

In b2)  “profinitely closed” means closed in the profinite topology, which for a
 subgroup $K$ of $B$  means that  $K = \cap_U KU$ where the intersection is over all normal subgroups $U$ of finite index in $B$.

\medskip
\begin{lemma} \label{res-fin} 
Suppose $G = A \wr_X B$ is a  residually finite group such that $B$ acts on $X$ with finitely many orbits.
 Then

1)  $\widehat{G} = \varprojlim_i G_i$, where  $G_i = A_i \wr_{X_i} B_i$, for some finite quotients $A_i$,$B_i$ and $X_i$ of $A$,$B$ and $X$ such that  $\widehat{A} = \varprojlim_i A_i$,  $\widehat{B} = \varprojlim_i B_i$ and the canonical map $X \to \varprojlim_i X_i$ is injective;

2)  $H_2( \widehat{G}, \widehat{\Z}) = \varprojlim_{i,m} H_2(G_i, \Z/ m \Z)$, where $ \widehat{\Z} = \varprojlim_m \Z / m \Z$.

\end{lemma}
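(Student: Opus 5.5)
We model the argument on Lemma \ref{LL4}. For a normal subgroup $K$ of finite index in $G$, set $A_K = A/(A\cap K)$ and $B_K = B/(B\cap K)$. To define a quotient set of $X$ we use the orbit decomposition. Write $X = \cup_{j=1}^k H_j\backslash B$, a finite disjoint union of orbits with representatives $x_j$, so that $H_j = stab_B(x_j)$. Put $X_K = \cup_j H_j(B\cap K)\backslash B$, which is a finite $B_K$-set. The natural map $G \to A_K \wr_{X_K} B_K$ is defined: $M=\oplus_X A$ maps to $\oplus_{X_K} A_K$ by summing coordinates over the fibres of $X\to X_K$. This map is $B$-equivariant, and it is a homomorphism when $A$ is abelian. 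When $A$ is non-abelian we are in case b1), so $X$ is finite and we may take $B\cap K$ inside every $H_j$; then $X_K = X$ and no summing is needed. Let $J(K)$ be the kernel of this epimorphism. If $J(K)\subseteq K$, then the groups $G/J(K)$ are cofinal among the finite quotients, and $\widehat{G}=\varprojlim_K A_K\wr_{X_K}B_K$.

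So the main step is to refine $K$ so that $J(K)\subseteq K$; this is the obstacle we expect, and we argue it casewise. In case b1), the core $\bigcap_j core_B(H_j)$ has finite index in $B$. Replace $K$ by $K' = K\cap \pi^{-1}(N)$, where $N$ is this core intersected with $B\cap K$; then $X_{K'}=X$ is finite. Since $M$ is then a finite direct product of copies of $A$, the kernel $J(K')$ is generated normally by $A\cap K'$ and $B\cap K'$, which lie in $K'$. In case b2), $A$ is abelian, and $J(K)$ is generated normally by $A\cap K$, $B\cap K$, and the elements $a_x a_{x'}^{-1}$ for $x,x'$ with the same image in $X_K$. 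For the last type we need $K$ to contain these differences. So we shrink: the subgroup $M\cap K$ has finite index in the $\Z B$-module $M$, and $B_0=B\cap K$ acts on the finite group $M/(M\cap K)$. Taking a normal finite index $U\subseteq B_0$ acting trivially on it, the elements $a_x - a_{xu}$ for $u\in U$ lie in $M\cap K$. Replacing $B\cap K$ by $U$, i.e.\ $K$ by the normal subgroup generated by $A\cap K$, $M\cap K$ and $U$, we obtain $J(K)\subseteq K$.

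Next we check the three properties of the limit. First, $\widehat{B}=\varprojlim B_K$ because $B$ is a retract of $G$. Second, $\widehat{A}=\varprojlim A_K$: in case b1) this follows as in Lemma \ref{LL4}, and in case b2) we compose $M\to A/N$ with the augmentation map on each orbit module $\Z[H_j\backslash B]$ to get a $B$-map killing everything except $A/N$ on one orbit. Third, $X\to\varprojlim X_K$ is injective. If $x=x_j b$ and $x'=x_{j'}b'$ have the same image in every $X_K$, then $j=j'$ and $b'b^{-1}\in\bigcap_U H_jU$, the intersection taken over normal subgroups $U$ of finite index in $B$ (the $B\cap K$ are cofinal among these). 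By profinite closedness of $H_j$, this intersection equals $H_j$, so $x=x'$. In case b1) the injectivity is trivial.

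Part 2) then follows formally, as in Corollary \ref{LL5} a). Continuous homology commutes with inverse limits of finite groups, and $\widehat{\Z}=\varprojlim_m\Z/m\Z$, so we get $H_2(\widehat{G},\widehat{\Z})\simeq\varprojlim_{i,m}H_2(G_i,\Z/m\Z)$ by the formula for $H_i(\widehat{G},B)$ in the preliminaries.
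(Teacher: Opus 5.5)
Your plan of indexing by finite-index normal subgroups $K$ of $G$, forming $A_K\wr_{X_K}B_K$, and proving $J(K)\subseteq K$ for cofinally many $K$ is a legitimate dual of the paper's argument. The paper instead indexes by pairs (a finite quotient $A_\delta$ of $A$, a finite-index normal subgroup $K_s$ of $B$). It proves cofinality by showing that every epimorphism $\theta\colon G\to C$ onto a finite group factors through some $A_i\wr_{X_i}B_i$.

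\textbf{The gap.} The key step $J(K)\subseteq K$ fails as written as soon as $B$ has more than one orbit on $X$. The subgroup $A\cap K$ refers to one fixed copy of $A$, say $Ax_1$. Your map $M\to\oplus_{X_K}A_K$ reduces every coordinate modulo this same subgroup, so $J(K)$ contains $(A\cap K)x$ for all $x\in X$. For $x$ in the orbit of $x_1$ these are $B$-conjugates of $(A\cap K)x_1$ and lie in $K$. For $x$ in another orbit there is no reason for this.

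Concretely, take $B=1$, $X=\{x_1,x_2\}$, $A=\Z$ (so $G=\Z^2$) and $K=2\Z\times 3\Z$. Then $A_K=\Z/2\Z$ and $J(K)=2\Z\times 2\Z\not\subseteq K$. Neither of your refinements helps, since both only shrink $B\cap K$ and leave $M\cap K$ unchanged.

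\textbf{The fix.} The missing ingredient is exactly the one in the paper: replace $A\cap K$ by $N_K=\bigcap_{j}\{a\in A \mid ax_j\in K\}$. Conjugation by $b\in B$ carries $ax$ to $a(xb)$ and $K$ is normal, so the set $\{a\in A\mid ax\in K\}$ depends only on the orbit of $x$. Hence $N_Kx\subseteq K$ for every $x\in X$. Moreover $N_K$ has finite index because there are only finitely many orbits. This is the paper's $\bigcap_{x\in X}I_x=\bigcap_{j\in J}I_{x_j}$, and it is where finiteness of the number of orbits is really used, not only to make $X_K$ finite.

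With $A_K=A/N_K$ the rest of your argument goes through. The orbit-wise augmentation still shows that the $N_K$ are cofinal among finite-index subgroups of $A$. Your treatment of $\widehat B$, of the injectivity of $X\to\varprojlim X_K$ via closedness of the $H_j$, and of part 2) matches the paper.

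\textbf{A side remark.} The obstacle you expected in case b2) is not there. For $k\in B\cap K$ and $m\in M$ we have $k^{-1}mk=m[m,k]$ with $[m,k]\in M\cap K$. So $B\cap K$ already acts trivially on $M/(M\cap K)$, and the differences $a_x-a_{xk}$ lie in $K$ without passing to $U$. This is the same fact the paper uses implicitly when it chooses $B_i$ to map onto $\theta(B)$.
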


\begin{proof} 1) By Cornulier's criterion for residual finiteness of $G$ \cite[Prop. 3.2]{C2} both $A$ and $B$ are residually finite and either every stabiliser in $B$ of a point of $X$ has finite index in $B$ or  $A$ is abelian and every stabiliser $H$ ( in $B$) of a point of $X$ is closed in the profinite topology of $B$.

Note that $X = \cup_{j \in J} H_j  \backslash B$ where the union is disjoint.  Since $B$ acts on $X$ with finitely many orbits the set $J$ is finite. Since $A$ is residually finite then $\widehat{A} = \varprojlim_{\delta \in \Delta} A_{\delta}$ with each $A_{\delta}$ finite quotient of $A$.

a) We suppose first that $A$ is abelian and every stabiliser $H$ ( in $B$) of a point of $X$ is closed in the profinite topology of $B$. Since each $H_j$ is closed in the profinite topology of $B$  there are   normal subgroups $K_{j,s} $  of finite index in $B$  such that  $H_j = \cap_{s \in S} H_j K_{j,s}$,  $\cap_{s \in S} K_{j,s} = 1$  and $\widehat{B} = \varprojlim_{s} B/ K_{j,s}$ for all $j \in J$. 

We set $I = \Delta \times S$, $$A_{(\delta,s)} = A_{\delta}, B_{(\delta,s)} = B/ \cap_{j \in J} K_{j,s} \hbox{  and }X_{(\delta,s)} = \cup_{ j \in J} (H_j K_{j,s} \backslash B)$$
where the union in $X_{(\delta,s)}$ is disjoint.  For $i = (\delta,s)$ we set $G_i = A_i \wr_{X_i} B_i$.
 We consider the map $\pi_i : X \to X_i$ that  sends a  $B$-orbit $H_j \backslash B$    to the orbit $H_j K_{j,s} \backslash B$. Note that if we consider the inverse system of all finite $B$-quotients of $X$ then its inverse  limit $\widehat{X} \simeq \varprojlim_{i \in I} X_i$. 
 The fact that the canonical map $X \to \varprojlim_i X_i$ is injective follows from $H_j = \cap_s H_j K_{j,s}$ for all $j$. Note that there is an epimorphism of groups $$G = A \wr_X B \to A_i \wr_{X_i} B_i$$ induced by the epimorphisms $A \to A_i$, $B \to B_i$ and the map $\pi_i : X \to X_i$.   We claim that
$$ \widehat{G} \simeq  \varprojlim_{i \in I} A_i \wr_{X_i} B_i$$
Since each $ A_i \wr_{X_i} B_i$ is a finite quotient of $G$ we need to show that every finite quotient $C$ of $G$ is a quotient of  $A_i \wr_{X_i} B_i$ for some $i$. 

Suppose $\theta : G \to C$ is an epimorphism. For $M$ the normal closure of $A$ in $G$ we have that $M \simeq A \otimes_{\mathbb{Z}} \mathbb{Z} X$ and define $I_x = \{ a \in A \ | \ \theta ( a \otimes x) = 1_C \}$ for $x \in X$, so $A/ I_x$ is isomorphic to a subgroup of $C$ and is finite. Recall that conjugation on $M$ (on the right) by $B$ corresponds to the action of $B$ (on the right) on $X$, hence $I_x = I_{x b}$. We claim that $\cap_{x \in X} I_x$ is a subgroup of finite index in $A$. Indeed since $B$ acts on $X$ with finitely many orbits, say $B$-orbits generated by $\{ x_j \ | \ j \in J \} $ then each $I_x = I_{x_j}$ for some $ j \in J$, hence   $\cap_{x \in X} I_x = \cap_{j \in J}  I_{x_j}$ has finite index in $A$.  Then we have to choose $i = (\delta,s)$ such that $A_i = A_{\delta}$ has as quotient $A / \cap_{x \in X} I_x$ i.e. this corresponds to the choice of $\delta$ and $B_i = B/ \cap_{j \in J} K_{j,s}$ has a quotient $\theta(B)$ i.e. this correspond to the choice of $s$. Then $C$ is a quotient of $A_i \wr_{X_i} B_i$.

 The choice of $K_{j,s}$ and $B_i$ implies  $\widehat{B} = \varprojlim_{i \in I} B_i$. 

 Let  $\overline{A}$ be a finite quotient of $A$. Take some $j \in I$ and consider $\overline{A} \wr_{X_j} B_j$. It is a finite quotient of $G$, so  by the previous paragraph $\overline{A} \wr_{X_j} B_j$  is a quotient of some $A_i \wr_{X_i} B_i$, in particular  $\overline{A}$ is a quotient of $A_i$. This implies $\widehat{A} = \varprojlim_{i \in I} A_i$. 

b) Now we consider the case when $A$ is not abelian   and each stabiliser $H_i$ has finite index in $B$, hence  $X$ is  finite.

Let $\widehat{B} = \varprojlim_{k \in K} B_k$ with $B_k$ finite quotients of $B$ and $\widehat{A} = \varprojlim_{\delta \in \Delta} A_{\delta}$ with $A_{\delta}$ finite quotients of $A$. Set $I = \Delta \times K$, for $i = (\delta, k)$ we set  $A_i = A_{\delta}$, $B_{i} = B_{k}, X_i = X$ and since $X$ is finite, $Perm(X)$ is finite and we can assume that for every $k \in K$ the kernel $Ker(B \to B_k)$ is inside the kernel of the action of $B$ on $X$.
 Then there is an epimorphism of groups $G = A \wr_X B \to A_i \wr_{X_i} B_i$ induced by the epimorphism $A \to A_i$, $B \to B_i$ and the identity map $id_X$. As before 
$$ \widehat{G} \simeq  \varprojlim_{i \in I} A_i \wr_{X_i} B_i$$

2)  Follows directly from the fact that profinite homology commutes with inverse limits.
\end{proof}

\begin{lemma} \label{discrete-homology}Let $G =  A \wr_X B$ be residually finite, where $B$ is a finitely generated free group, $A$ is a finitely generated abelian group. Let  $M = \oplus_{x \in X} Ax$, so $G = M \rtimes B$. Then  

a) there is a short exact sequence of abelian groups $ 0 \to H_0(B, M 
\wedge_{ \Z} M) \to H_2(G, \Z) \to H_1(B, M) \to 0$.

b) for every integer $q \geq 2$  there is a short exact sequence of abelian groups $ 0 \to H_0(B, H_2(M, \Z/ q \Z)) \to H_2(G, \Z/ q \Z) \to H_1(B, M/ qM) \to 0$.
 \end{lemma}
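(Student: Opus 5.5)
Since $B$ is free, it has cohomological dimension one, and the plan is to run the LHS spectral sequence for $1 \to M \to G \to B \to 1$, as in the proof of Lemma \ref{L1}. We have
$$E^2_{p,q} = H_p(B, H_q(M, \Z)),$$
and $E^2_{p,q}=0$ for $p \geq 2$ because $B$ is free. So only the columns $p=0$ and $p=1$ survive. All differentials $d^r$ with $r\geq 2$ change $p$ by $r$, so they vanish, and $E^2 = E^\infty$.

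For total degree $2$ the filtration of $H_2(G,\Z)$ then has only two nonzero pieces. This gives a short exact sequence
$$0 \to E^2_{0,2} \to H_2(G,\Z) \to E^2_{1,1} \to 0, \qquad E^2_{2,0}=H_2(B,\Z)=0 .$$

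Next we identify the two terms. Since $M$ is abelian, $H_2(M,\Z) \simeq M \wedge_\Z M$ by \cite[Thm. 6.4]{Brown}, so $E^2_{0,2} = H_0(B, M\wedge_\Z M)$. Also $H_1(M,\Z) = M$, so $E^2_{1,1} = H_1(B, M)$. This proves a). We should check that the $B$-module structure on $H_2(M,\Z)$ coming from conjugation corresponds to the diagonal action on $M \wedge M$. This holds by naturality of the isomorphism $H_2(M,\Z)\simeq M\wedge M$ with respect to automorphisms of $M$.

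For b) we use the same spectral sequence with coefficients $\Z/q\Z$:
$$E^2_{p,s} = H_p(B, H_s(M, \Z/q\Z)).$$
Again it has only two columns and degenerates at $E^2$, and $H_2(B,\Z/q\Z)=0$. It remains to identify $H_1(M,\Z/q\Z)$. By universal coefficients,
$$H_1(M,\Z/q\Z) \simeq M\otimes \Z/q\Z \;\oplus\; Tor_1^\Z(H_0(M,\Z),\Z/q\Z),$$
and the Tor term vanishes since $H_0(M,\Z)=\Z$. Hence $H_1(M,\Z/q\Z) \simeq M/qM$, naturally in $M$, so the identification is $B$-equivariant. This gives b).

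There is no serious obstacle. The only points needing care are the vanishing of $H_p(B,-)$ for $p\ge 2$, which uses that free groups have cohomological dimension $1$, and the naturality of the identifications, which ensures the $B$-actions match. Finite generation and residual finiteness are not needed for this lemma.
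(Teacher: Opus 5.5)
Your proposal is correct and follows the paper's proof. The LHS spectral sequence for $1 \to M \to G \to B \to 1$ has only the columns $p=0,1$ because $B$ is free, so it collapses at $E^2$, and the filtration of $H_2$ gives the short exact sequence via $H_2(M,\Z)\simeq M\wedge_\Z M$ and $H_1(M,\Z)\simeq M$ (and their analogues with $\Z/q\Z$ coefficients). Your universal-coefficients check that $H_1(M,\Z/q\Z)\simeq M/qM$ holds $B$-equivariantly fills in a detail the paper leaves implicit, and you are right that finite generation and residual finiteness play no role in this lemma.
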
 
\begin{proof} a)
Consider the LHS spectral sequence $$E_{i,j}^2 = H_i(B, H_j (M, \Z))$$ that converges to $H_{i+j} (G, \Z)$.  Since $B$ is a free group $E_{i,j}^2 = 0$ for all $i \geq 2$. 
  Then by comparing the bidegrees of the maps we conclude that $$E_{0,2}^{\infty} = E_{0,2}^2 = H_0(B, H_2(M, \Z)) \simeq  H_0(B, M \wedge_{\Z}  M)$$ and $$  E_{1,1}^{\infty} = E_{1,1}^2 = H_1(B, H_1(M, \Z)) \simeq H_1(B, M)$$
 Finally by the convergence of the spectral sequence  
 $$0 \to  E_{0,2}^{\infty}  \to H_2(G, \Z) \to E_{1,1}^{\infty} \to 0$$ 
 
  b) The proof is the same as the proof of a) but in the homology group change coefficients $\Z$ with $\Z/ q \Z$.
\end{proof}

Let $G =  A \wr_X B$ be residually finite, where $B$ is a finitely generated free group, $A$ is a finitely generated free abelian group,   $B$ acts on $X$ with finitely many orbits.  By Lemma \ref{res-fin} the profinite completion $\widehat{G} = \varprojlim_i G_i$, where  $G_i = A_i \wr_{X_i} B_i$, for some finite quotients $A_i$,$B_i$ and $X_i$ of $A$,$B$ and $X$ such that  $\widehat{A} = \varprojlim_i A_i$,  $\widehat{B} = \varprojlim_i B_i$ and the canonical map $X \to   \widehat{X} :=  \varprojlim_i X_i$ is injective. We view $\widehat{X}$ as a profinite space and set $\widehat{A}[[\widehat{X}]] : = \varprojlim_i A_i [X_i]$, where $A_i[X_i] = M_i$ is the normal closure of $A_i$ in $G_i$.

 Thus $\widehat{G} \simeq \widehat{A}[[\widehat{X}]] \rtimes \widehat{B}$. We write $\overline{M}$ for the closure of $M$ in $\widehat{G}$ i.e. $\overline{M} \simeq \widehat{A}[[\widehat{X}]]$.

 Recall  that $X = \cup_{j \in J} H_j  \backslash B$ where the union is disjoint and $J$ is a finite set.

\begin{lemma} \label{profinite-homology}Let $G =  A \wr_X B$ be a residually finite group, where $B$ is a finitely generated free group,  $A = \mathbb{Z}^s$  and  $B$ acts on $X$ with finitely many orbits.   Then  

a) there is a short exact sequence of abelian groups $$ 0 \to H_0(\widehat{B},    \overline{M} \widehat{\wedge}_{\widehat{\Z}} \overline{M}) \to H_2(\widehat{G}, \widehat{\Z}) \to H_1(\widehat{B}, \overline{M}) \to 0$$ 

 b) there is a short exact sequence of abelian groups $$ 0 \to H_0(\widehat{B},    H_2(\overline{M}, \Z/ q \Z)) \to H_2(\widehat{G}, {\Z}/ q \Z) \to H_1(\widehat{B}, \overline{M}/ q \overline{M}) \to 0$$

\end{lemma}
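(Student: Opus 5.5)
We mimic the proof of Lemma \ref{discrete-homology}, now in the category of profinite groups and profinite modules. Since $\widehat{G} \simeq \overline{M} \rtimes \widehat{B}$ with $\overline{M} \simeq \widehat{A}[[\widehat{X}]]$ a closed normal abelian subgroup, we consider the profinite LHS spectral sequence
$$E^2_{i,j} = H_i(\widehat{B}, H_j(\overline{M}, \widehat{\Z})) \Longrightarrow H_{i+j}(\widehat{G}, \widehat{\Z}),$$
and the analogous one with coefficients in $\Z/q\Z$. Both are available for profinite coefficient modules (see \cite{R-Z}). The key input is that $\widehat{B}$ is the profinite completion of a finitely generated free group, hence a free profinite group. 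Free profinite groups have cohomological dimension $\leq 1$ for every prime, so they are projective profinite groups. Consequently $H_i(\widehat{B}, V) = 0$ for all $i \geq 2$ and every profinite $\widehat{\Z}[[\widehat{B}]]$-module $V$; we get this by dualising the vanishing of $H^i(\widehat{B}, -)$ for discrete torsion modules, or by using a projective resolution of length $1$ of $\widehat{\Z}$ over $\widehat{\Z}[[\widehat{B}]]$. Hence only the columns $i=0,1$ are nonzero, all differentials touching total degree $2$ vanish, and we get a short exact sequence
$$0 \to H_0(\widehat{B}, H_2(\overline{M}, \widehat{\Z})) \to H_2(\widehat{G}, \widehat{\Z}) \to H_1(\widehat{B}, H_1(\overline{M}, \widehat{\Z})) \to 0.$$

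It remains to identify the coefficient modules. Since $\overline{M}$ is an abelian profinite group, $H_1(\overline{M}, \widehat{\Z}) \simeq \overline{M}$. For $H_2$ we use Lemma \ref{wedge2}, which requires $\overline{M}$ to be torsion-free. Here we use $A = \Z^s$. Each finite quotient $M_i = A_i[X_i]$ is a direct sum of copies of $A_i$, and the $A_i$ may be taken of the form $(\Z/m\Z)^s$. Then $\overline{M} \simeq \varprojlim \widehat{\Z}^s \widehat{\otimes} \widehat{\Z}[[X_i]] \simeq \widehat{\Z}^s \widehat{\otimes}_{\widehat{\Z}} \widehat{\Z}[[\widehat{X}]]$. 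This is a free profinite $\widehat{\Z}$-module on a profinite space, and it is torsion-free: an inverse limit of the torsion-free modules $\widehat{\Z}^s[X_i]$ is torsion-free, because multiplication by $n$ is injective at each level. Lemma \ref{wedge2} a) then gives $H_2(\overline{M}, \widehat{\Z}) \simeq \overline{M} \widehat{\wedge}_{\widehat{\Z}} \overline{M}$, which proves part a).

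For part b), the same spectral sequence with coefficients $\Z/q\Z$ gives
$$0 \to H_0(\widehat{B}, H_2(\overline{M}, \Z/q\Z)) \to H_2(\widehat{G}, \Z/q\Z) \to H_1(\widehat{B}, H_1(\overline{M}, \Z/q\Z)) \to 0,$$
and $H_1(\overline{M}, \Z/q\Z) \simeq \overline{M}/q\overline{M}$. Since $\overline{M}$ is torsion-free, there is no $\mathrm{Tor}$ correction term here.

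We expect the main obstacle to be the technical justification of the profinite LHS spectral sequence with profinite (not discrete) coefficients, together with the vanishing $H_i(\widehat{B}, V) = 0$ for $i \geq 2$ and $V$ profinite. We would handle both by writing everything as inverse limits over finite levels. Homology of profinite groups with profinite coefficients is $\varprojlim$ of homology with finite coefficients, and $\varprojlim$ is exact on inverse systems of finite (compact) groups, so the spectral sequence and the exact sequence pass to the limit. The vanishing for finite coefficients follows from $cd(\widehat{B}) \leq 1$ by Pontryagin duality.
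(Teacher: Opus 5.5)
Your proposal is correct and is essentially the paper's proof: the profinite LHS spectral sequence for $\widehat{G} \simeq \overline{M} \rtimes \widehat{B}$, vanishing of the columns $i \geq 2$ because $\widehat{B}$ is a free profinite group, and identification of $E^2_{0,2}$ by Lemma \ref{wedge2} and of $E^2_{1,1}$ via $H_1(\overline{M}, -)$. Your explicit check that $\overline{M} \simeq \widehat{\Z}^s \widehat{\otimes}_{\widehat{\Z}} \widehat{\Z}[[\widehat{X}]]$ is torsion-free (the hypothesis of Lemma \ref{wedge2}) fills in a detail the paper leaves implicit; one small correction is that $H_1(\overline{M}, \Z/q\Z) \simeq \overline{M}/q\overline{M}$ does not need torsion-freeness, because the only possible correction term is the $q$-torsion of $H_0(\overline{M}, \widehat{\Z}) = \widehat{\Z}$, which is zero.
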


\begin{proof} a) The proof is a profinite version of the proof of Lemma \ref{discrete-homology}.
Consider the LHS spectral sequence $$E_{i,j}^2 = H_i(\widehat{B}, H_j (\overline{M}, \widehat{\Z}))$$ that convergence to $H_{i+j} (\widehat{G}, \widehat{\Z})$. Since $B$ is a finitely generated free group, $\widehat{B}$ is a free profinite group. Then $E_{i,j}^2 = 0$ for $i \geq 2$.

  Then by comparing the bidegrees of the maps and by Lemma \ref{wedge2}  we conclude that $$E_{0,2}^{\infty} = E_{0,2}^2 = H_0(\widehat{B}, H_2(\overline{M},  \widehat{\Z} )) \simeq  H_0(B, \overline{M} \widehat{\wedge} \overline{M})$$ where above we used Lemma \ref{wedge2}  and $$  E_{1,1}^{\infty} = E_{1,1}^2 = H_1(\widehat{B}, H_1(\overline{M},  \widehat{\Z} )) \simeq H_1(\widehat{B}, \overline{M})$$
  Finally  by the convergence of the spectral sequence there is a short exact sequence 
 $$0 \to   E_{0,2}^{\infty}  \to H_2(\widehat{G}, \widehat{\Z}) \to E_{1,1}^{\infty} \to 0$$
 
 b) In the proof of a) substitute coefficients $\widehat{\Z}$ with $\Z/ q \Z$.
\end{proof}

\begin{theorem} \label{comb-wreath-main}  Let $G =  A \wr_X B$ be a residually finite group, where $B$ is a finitely generated free group, $A$ is a torsion-free finitely generated abelian group, $B$ acts on $X$ with finitely many orbits  and each stabiliser $stab_B(x)$ is finitely generated for $x \in X$.  Then 

a) the map $H_2(G, \Z) \to H_2( \widehat{G}, \widehat{\Z})$
 is injective. In particular $G \wedge G$ is residually finite

b) for every integer $ q \geq 2$  the map $H_2(G, \Z/ q \Z) \to H_2( \widehat{G}, {\Z}/ q \Z) \simeq  \varprojlim_i H_2( A_i \wr_{X_i} B_i, \Z/  q \Z)$   is  injective. In particular $G \wedge^q G$ is residually finite.
\end{theorem}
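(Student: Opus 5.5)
The plan is to compare the two short exact sequences of Lemma \ref{discrete-homology} a) and Lemma \ref{profinite-homology} a). The canonical map $G \to \widehat{G}$ maps $M$ into $\overline{M}$ and $B$ into $\widehat{B}$, and the LHS spectral sequence is natural. So we get a commutative diagram with exact rows
$$0 \to H_0(B, M \wedge_{\Z} M) \to H_2(G,\Z) \to H_1(B,M) \to 0$$
and
$$0 \to H_0(\widehat{B}, \overline{M}\widehat{\wedge}_{\widehat{\Z}}\overline{M}) \to H_2(\widehat{G},\widehat{\Z}) \to H_1(\widehat{B},\overline{M}) \to 0 .$$
By the four (snake) lemma it suffices to show that the two outer vertical maps $\alpha_0 : H_0(B, M\wedge M) \to H_0(\widehat{B}, \overline{M}\widehat{\wedge}\overline{M})$ and $\alpha_1 : H_1(B,M)\to H_1(\widehat{B},\overline{M})$ are injective. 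Then Theorem \ref{A-L-Da}, 1) gives the residual finiteness of $G \wedge G$. Part b) is handled the same way with the sequences b), and Proposition \ref{iff25} gives the conclusion, since $G$ is finitely generated: $B$ is finitely generated and acts with finitely many orbits.

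\textbf{The term $H_1$.} Write $M \simeq \oplus_{j\in J} A\otimes_{\Z}\Z[H_j\backslash B]$ with $A=\Z^s$. By Shapiro's lemma, $H_1(B,M)\simeq \oplus_j A\otimes H_1(H_j,\Z) = \oplus_j A\otimes H_j^{ab}$. Similarly, $\overline{M}\simeq \oplus_j \widehat{A}[[\overline{H_j}\backslash\widehat{B}]]$, where $\overline{H_j}$ is the closure of $H_j$ in $\widehat{B}$, so profinite Shapiro gives $H_1(\widehat{B},\overline{M}) \simeq \oplus_j \widehat{A}\widehat{\otimes}\, \overline{H_j}^{ab}$. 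This is where finite generation of the stabilisers is used. A finitely generated subgroup of a free group is closed in the profinite topology (M. Hall), and it is a free factor of a finite index subgroup. Hence the profinite topology of $B$ induces the full profinite topology on $H_j$, and $\overline{H_j}\simeq\widehat{H_j}$ is free profinite. Therefore $H_j^{ab}\simeq \Z^{r_j}\to \widehat{H_j}^{ab}\simeq\widehat{\Z}^{r_j}$ is injective. Tensoring with $A=\Z^s$ keeps it injective, so $\alpha_1$ is injective. For b), the same computation gives $(\Z/q)^{sr_j}\to(\Z/q)^{sr_j}$, an isomorphism.

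\textbf{The term $H_0$; the main obstacle.} Here $M\wedge M$ is a permutation-like module over $B\times$-orbits of $X\times X$. Concretely, $M\wedge M$ is spanned by the classes $(a x)\wedge(a' y)$, and taking $B$-coinvariants yields a direct sum over $B$-orbits of unordered pairs $\{x,y\}$, $x\neq y$, of copies of $A\otimes A$. For orbits with $x\neq y$ interchanged by the action there are quotients of $A\otimes A$ such as $A\square A$, and the diagonal $x=y$ contributes $A\wedge A$ orbitwise. This is the analogue of the computation \eqref{final-eq} in the proof of Theorem \ref{important}. The same description holds for each finite quotient $M_i=A_i[X_i]$, and the profinite side is the inverse limit. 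As in the proof of Theorem \ref{important}, it suffices to treat a finite set of orbits of pairs, and to find $i$ such that these orbits map to distinct orbits in $X_i\times X_i$ with the same "type" (swappable or not).

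This separation step is the hardest. Distinct $B$-orbits of pairs $(H_jb,H_kb')$ correspond to double cosets $H_j\, c\, H_k$. So we need that finitely many distinct double cosets $H_jcH_k$ in $B$ remain distinct in suitable finite quotients, i.e.\ that double cosets of finitely generated subgroups of a free group are profinitely closed. I would invoke the result of Niblo (double cosets of finitely generated subgroups in free groups are separable), or reduce to Hall's theorem by hand. The type preservation follows similarly, since the condition that an element lies outside $H_jcH_j c^{-1}$-type sets is again a separability statement. Once separation is in place, each component map is $A\otimes A\to\widehat{A}\widehat{\otimes}\widehat{A}$, $A\square A\to\widehat{A}\widehat{\square}\widehat{A}$ or $A\wedge A\to\widehat{A}\widehat{\wedge}\widehat{A}$. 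These are injective for $A=\Z^s$, because $A$ is finitely generated (the lemma preceding the corollary in Section \ref{ChapterResFinWr}). For b), we replace $A$ by $A/qA$, and the same components are finite and map isomorphically.
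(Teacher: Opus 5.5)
Your proposal is correct and follows essentially the same route as the paper. It compares the two LHS short exact sequences (Lemmas \ref{discrete-homology} and \ref{profinite-homology}), uses Shapiro's lemma with Hall's theorem ($\overline{H}_j\simeq\widehat{H}_j$ free profinite) for the $H_1$ term, and uses profinite closedness of double cosets $H_icH_j$ for the coinvariants of $M\wedge_{\Z}M$; the paper cites Gitik--Rips there rather than Niblo. Your explicit tracking of the $A\square_{\Z}A$ summands, which come from orbits of pairs swapped by an element of $B$ (self-inverse double cosets), and of their type in the finite quotients, is more careful than the paper's Claim 2, whose displayed formula for $H_0(B,M\wedge_{\Z}M)$ omits this twist (compare Lemma \ref{LL1}).
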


\begin{proof} 
 Suppose $H_i = stab_B(x_i)$ where $X =  \cup_{i \in J} x_i B$ is a disjoint union   with $J$ finite. 
Let $\overline{H}_i$ be the closure of $H_i$ in $ \widehat{B}$. Note that since $H_i$ is finitely generated subgroup of a free group by \cite{Hall} $H_i$ is a virtual retract i.e. there is a subgroup of finite index $S_i$ of $B$ such that $H_i$ is a retract of $S_i$. Then $S_i$ is a finitely generated free group, the profinite topology of $B$ induces the profinite topology of $S_i$ and since $H_i$ is a retract of $S_i$ the profinite topology of $S_i$ induces the profinite topology of $H_i$. Then  $\overline{H}_i \simeq \widehat{H}_i$ and $H_i$ is a finitely generated free group, so $\widehat{H}_i$ is a finitely generated free profinite group.

Consider the short exact sequence
$ 0 \to H_0(B, M \wedge_{\Z} M) \to H_2(G, \Z) \to H_1(B, M) \to 0$ given by Lemma \ref{discrete-homology}.

\medskip
{\bf Claim 1} a)  $H_1(B, M) \to H_1( \widehat{B}, \overline{M})$ is injective.

 b) For $q \geq 2$ we have that $H_1(B, M/q M ) \to H_1( \widehat{B}, \overline{M}/  q\overline{M})$ is injective.

\medskip {\bf Proof of Claim 1} a) As before $J$ is the set of orbits of the action of $B$ on $X$.
 Here $M =  A \otimes_{\Z} \Z X$ and $\overline{M} = \widehat{A} [[ \widehat{X}]] \simeq \oplus_{j \in J} \widehat{A} \widehat{\otimes}_{ \widehat{\Z}[[ \overline{H}_j]]} \widehat{\Z}[[\widehat{ B}]]$.
Note that  $$H_1(B, M)  \simeq H_1(B, A \otimes_{\Z} \Z X) \simeq A \otimes_{\Z} H_1(B, \Z X) \simeq A \otimes_{\Z} (H_1(B,  \oplus_{j \in J} \Z \otimes_{\Z H_j} \Z B)) \simeq $$ 
$$A \otimes_{\Z} ( \oplus_{j \in J} H_1(H_j, \Z)) \simeq  A \otimes_{\Z}  ( \oplus_{j \in J} H_j^{ab}) \simeq \oplus_{j \in J} A \otimes_{\Z}   H_j^{ab}   $$
 and $$H_1( \widehat{B}, \overline{M}) \simeq  \oplus_{j \in J} H_1(\widehat{B}, \widehat{A} \widehat{\otimes}_{ \widehat{\Z}[[ \overline{H}_j]]} \widehat{\Z}[[\widehat{ B}]]) \simeq  \oplus_{j \in J} \widehat{A}  \widehat{\otimes}_{\Z} H_1(\widehat{B}, {\widehat{\Z}}   \widehat{\otimes}_{\widehat{\Z} [[\overline{H}_j]]} 
 \widehat{\Z}[[\widehat{ B}]])$$ $$ \simeq \oplus_{j \in J} \widehat{A}  \widehat{\otimes}_{\widehat{\Z}} H_1(\overline{H}_j, {\widehat{\Z}}) \simeq
  \oplus_{j \in J} \widehat{A}  \widehat{\otimes}_{\widehat{\Z}} \overline{H}_j^{ab} 
 $$
Recall that  that $\overline{H}_j \simeq \widehat{H}_j$ is a free finitely generated profinite group,  hence the abelianization $\overline{H}_j^{ab} \simeq \widehat{\mathbb{Z}}^{r_j}$ for some non-negative integer $r_j$. The fact that $A$ is finitely generated abelian implies that $A \to \widehat{A}$ is injective.  Note that the map $A \otimes_{\Z} H_j^{ab} \to \widehat{A} \otimes_{\widehat{\Z}}  \overline{H}_j^{ab} $ is injective.

 b) Similarly we get $$H_1(B, M/ q M)  \simeq  \oplus_{j \in J} (A/qA) \otimes_{\Z}  (  H_j^{ab})   \hbox{ and } H_1( \widehat{B}, \overline{M}  /  q  \overline{M} ) \simeq  \oplus_{j \in J} (\widehat{A}/ q \widehat{A})  \widehat{\otimes}_{\widehat{\Z}} \overline{H}_j^{ab} $$
This completes the proof of Claim 1.

\medskip
{\bf Claim 2} a) $H_0({B},    {M}{\wedge} {M}) \to H_0(\widehat{B},    \overline{M} \widehat{\wedge}_{\widehat{\Z}} \overline{M})$ is injective.

 b)  $H_0({B}, H_2(M, \Z/ q \Z)) \to H_0(\widehat{B},    H_2(\overline{M}, \Z/ q \Z))$ is injective. 

\medskip {\bf Proof of Claim 2} a)
  Set  $$Z =  X \times X \setminus diag(X)$$ and $\epsilon: Z \to Z$ sends $( x_1, x_2)$ to $(x_2, x_1)$. Define $\widetilde{Z} = Z/ \sim$ where  $y_1, y_2 \in Z$, $y_1 \sim y_2$ if $y_1 \in \{ y_2, \epsilon(y_2) \}$.

Recall that $M =  A \otimes_{\Z} \Z X \simeq \oplus_{x \in X} Ax$, where $Ax \simeq A$. Hence   $$M \wedge_{\Z} M \simeq (\oplus_{ \{ y = (x_1, x_2), \epsilon(y) \} \subset Z } Ax_1 \otimes_{\Z} A x_2) \oplus (\oplus_{x \in X} A x \wedge_{\Z} Ax) \simeq$$ $$ ((A \otimes_{\Z} A) \otimes_{\Z} \Z  \widetilde{Z}) \oplus ((A \wedge_{\Z} A) \otimes_{\Z} \Z X) $$
hence
 $$ H_0({B},    {M}{\wedge}_{\Z} {M}) \simeq 
((A \otimes_{\Z} A) \otimes_{\Z} H_0 (B, \Z  \widetilde{Z})) \oplus ((A \wedge_{\Z} A) \otimes_{\Z} H_0(B,\Z X))\simeq$$ $$ 
((A \otimes_{\Z} A) \otimes_{\Z}  \Z  (\widetilde{Z}/B)) \oplus ((A \wedge_{\Z} A) \otimes_{\Z} \Z (X/B))$$ 
Note that by Lemma \ref{res-fin} $ \widehat{G} = \varprojlim_i A_i \wr_{X_i} B_i$ with { $A_i, B_i, X_i =  \cup_{ j \in J} (H_j K_{j,i} \backslash B)$ finite, where $X = \cup_{j \in J} H_j \backslash B$.} Here $\overline{M} =  \varprojlim_i A_i \otimes_{\Z} \Z_{m_i}[X_i]$ where $m_i$ is the exponent of $A_i$. Then 
 $$ H_0(\widehat{B},    \widehat{M}\widehat{\wedge}_{\widehat{\Z}} \widehat{M}) \simeq 
 \varprojlim_i H_0(B_i,  (A_i \otimes_{\Z} \Z_{m_i} X_i) \wedge ( A_i \otimes_{\Z} \Z_{m_i}X_i)) \simeq$$ 
 $$ \varprojlim_i  ((A_i {\otimes}_{\Z} A_i) {\otimes}_{\Z} \Z_{m_i}  (\widetilde{Z}_{i}/B_i)) \oplus \varprojlim_j  ((A_i \wedge_{\Z} A_i) \otimes_{\Z} { \Z_{m_i} } (X_i/B_i)) $$
where $Z_i =  X_i \times X_i \setminus diag(X_i)$, $\epsilon_i: Z_i \to Z_i$ sends $( x_1, x_2)$ to $(x_2, x_1)$ and $\widetilde{Z}_{i} = Z_i/ \sim$ where  $y_1, y_2 \in Z_i$, $y_1 \sim y_2$ if $y_1 \in \{ y_2, \epsilon_i(y_2) \}$.

Since $A$ is a finitely generated abelian group then $A \otimes_{\Z} A$ embeds in $\widehat{A} \widehat{\otimes}_{\widehat{\Z}} \widehat{A}$ and $ A \wedge_{\Z} A$ embeds in $\widehat{A} \widehat{\wedge}_{\widehat{\Z}} \widehat{A}$. 
Furthermore $X/ B$ embeds in $ \widehat{X} / \widehat{B} \simeq \varprojlim_i (X_i/B_i)$ by construction since $\cap_i H_j K_{j,i} = H_j$, i.e., $H_j$ is closed in the profinite topology of $B$.
Thus to prove the claim it remains to show that $\widetilde{Z} /B$ embeds in $\varprojlim_i ( \widetilde{Z}_{i}/B_i)$.

 Define
$$Y_1 = \{ H_{j_1} b H_{j_2} \ | \ b \in B, j_1, j_2 \in J, j_1 \not= j_2 \}$$
and
$$ Y_2 = \{  H_j b H_j   \ | \ b \in B \setminus H_j, j \in J  \}$$
 Then $Z / B \simeq Y_1 \cup Y_2$.  This holds because for the element $(H_{j_1} b_1, H_{j_2} b_2) \in X \times X$ its $B$-orbit can be identified with the double coset class $H_{j_1} b_1 b_2^{-1} H_{j_2}$. In this notation $\epsilon(H_{j_1} b H_{j_2} ) =  H_{j_2} b^{-1} H_{j_1}  $. 
Hence $$\widetilde{Z} / B \simeq (Y_1/ \sim_0) \cup (Y_2 / \sim_0)$$ where for  $j_1, j_2 \in J$ we have  $H_{j_1} b_1 H_{j_2} \sim_0 H_{j_1} b_1 H_{j_2}$  and  $H_{j_1} b_1 H_{j_2} \sim_0 H_{j_2} b_1^{-1} H_{j_1}$.
 
 Fix a linear order $(J, <)$. Then 
 $$
 Y_1/ \sim_0 \simeq  \{  H_{j_1} b H_{j_2} \ | \ b \in B, j_1, j_2 \in J, j_1 < j_2 \} = : \widetilde{Y}_1$$
 and 
 $$  Y_2/ \sim_0 \simeq  \{  H_{j} b H_{j} \cup H_j b^{-1} H_j \ | \ b \in B \setminus H_j, j \in J \} = : \widetilde{Y}_2$$
 
 The fact that the map $\widetilde{Y}_1 \cup \widetilde{Y}_2 \simeq \widetilde{Z} /B \to \varprojlim_i ( \widetilde{Z}_{i}/B_i)$ is injective follows from the fact that  if $H, K$ are finitely generated subgroups of a finitely generated free group $F$ then $HgK$ is closed in the profinite topology, see \cite{G-R}, \cite{G-R2}, applied for $F = B$ and $H= H_{j_1}, K= H_{j_2}$.

b) By the universal coefficients theorem there is a short exact sequence
$$0 \to H_2(M, \Z) \otimes_{\Z} (\Z/ q \Z) \to H_2(M, \Z/ q \Z) \to Tor_1^{\Z}(M, \Z/ q \Z) \to 0$$
Since $A$ is a finitely generated, torsion-free group then $M$ is a free $\Z$-module, hence $ Tor_1^{\Z}(M, \Z/ q \Z) = 0$. Hence
$$H_2(M, \Z/ q \Z) \simeq  H_2(M, \Z) \otimes_{\Z} (\Z/ q \Z) \simeq { (M \wedge_{\Z} M) / q (M \wedge_{\Z} M) \simeq M_0 \wedge_{\Z} M_0}$$
where $M_0 =  M/ q M$. Then we can apply the argument from a)  for the group $M_0 = M/ q M$ instead of $M$, $A_0 = A/ qA$ instead of $A$ and get that the map
$$H_0({B},    {M_0}{ {\wedge}_{\Z} } {M_0}) \to H_0(\widehat{B},    \overline{M_0} { \widehat{\wedge}_{\widehat{\Z}}  }
\overline{M_0}) \hbox{ is injective.}$$
Finally by Lemma \ref{wedge2}
$$H_2(\overline{M}, \Z/ q \Z) \simeq  \overline{M_0} \widehat{\wedge}_{\widehat{\Z}} \overline{M_0}$$
 This completes the proof of Claim 2, b).

Finally to conclude the proof of Theorem \ref{comb-wreath-main},a) we apply Claim 1 a), Claim 2 a), Lemma \ref{discrete-homology} a) and Lemma \ref{profinite-homology} a).
To conclude the proof of Theorem \ref{comb-wreath-main},b)  we apply Claim 1 b), Claim 2 b), Lemma \ref{discrete-homology} b) and Lemma \ref{profinite-homology} b).
\end{proof}

\section{More on the homology of the combinatorial wreath product $A \wr_X B$ } \label{homCom}

  We note that the conditions imposed  in the following result require that  $H_1(H, \Z) = 0 = H_2(H, \Z)$. Examples of such groups $H$ are constructed in \cite{B-R-S}.  
 
\begin{theorem} \label{TT0}
Suppose that for every stabilizer $H= stab_B(x)$ of an element $x \in X$ we have that     $ H_2(H, \z) = 0 = H_1(H, \z) $ and the abelianisation $A^{ab}$ is torsion-free. Then for   $G = A \wr_X B$ we have
$$H_2(G, \Z) \simeq H_0(B, H_2(M, \Z)) \oplus H_2(B, \Z)$$
\end{theorem}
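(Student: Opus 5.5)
We follow the proof of Lemma \ref{L1} almost verbatim, with the LHS spectral sequence
$$E^2_{p,q} = H_p(B, H_q(M, \Z)) \Longrightarrow H_{p+q}(G, \Z)$$
associated to $1 \to M \to G \to B \to 1$, where $M = \oplus_{x \in X} A x$.

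The first step carries over unchanged. Since $G = M \rtimes B$, the group $B$ is a retract of $G$. The naturality argument of Lemma \ref{L1} then shows that $E^\infty_{n,0} = E^2_{n,0} = H_n(B,\Z)$ and that all differentials leaving the bottom row vanish. Hence $E^2_{2,0} = E^\infty_{2,0}$, and the differentials ending at $E^r_{1,1}$ are zero.

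The only new point is to show $E^2_{1,1} = 0 = E^2_{2,1}$. Once this is known, $E^r_{0,2}$ receives no nonzero differential, so $E^\infty_{0,2} = E^2_{0,2}$. We then get the exact sequence
$$0 \to H_0(B, H_2(M,\Z)) \to H_2(G,\Z) \to H_2(B,\Z) \to 0,$$
and it splits because $H_2(B,\Z)$ is a retract of $H_2(G,\Z)$.

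For the vanishing, write $X = \bigcup_{j} x_j B$ with $H_j = stab_B(x_j)$. Then
$$H_1(M,\Z) = M^{ab} \simeq \oplus_{x \in X} A^{ab} \simeq \oplus_j A^{ab} \otimes_{\Z} \Z[H_j \backslash B] \simeq \oplus_j A^{ab} \otimes_{\Z} (\Z \otimes_{\Z H_j} \Z B).$$
Here $B$ acts trivially on $A^{ab}$, so each summand is the module induced from the trivial $H_j$-module $A^{ab}$. Homology commutes with arbitrary direct sums, so Shapiro's lemma gives
$$H_k(B, M^{ab}) \simeq \oplus_j H_k(H_j, A^{ab}).$$
It remains to show $H_k(H, A^{ab}) = 0$ for $k = 1, 2$ and each stabilizer $H$.

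This is where the hypotheses enter, and I expect it to be the only real step. $A^{ab}$ is torsion-free, hence flat over $\Z$, so $Tor^{\Z}_1(H_{k-1}(H,\Z), A^{ab}) = 0$. The universal coefficient theorem then gives
$$H_k(H, A^{ab}) \simeq H_k(H, \Z) \otimes_{\Z} A^{ab}.$$
This vanishes for $k = 1, 2$ since $H_1(H,\Z) = 0 = H_2(H,\Z)$. Thus $E^2_{1,1} = 0 = E^2_{2,1}$, which completes the argument.

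Two points need checking. Torsion-freeness is exactly what kills the Tor term; without it one would need $H_1(H,\Z) \otimes A^{ab}$ and $Tor(H_1(H,\Z), A^{ab})$ to vanish separately. Also, $X$ may have infinitely many orbits, which is why we rely on homology commuting with direct sums rather than any finiteness of $X/B$.
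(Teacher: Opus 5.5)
Your proposal is correct and follows the paper's proof. Both run the same LHS argument as Lemma \ref{L1} and derive $E^2_{1,1}=0=E^2_{2,1}$ from Shapiro's lemma and the torsion-freeness of $A^{ab}$. The only difference is cosmetic. The paper first writes $A^{ab}$ as a direct limit of finitely generated free abelian subgroups to pull it out as $A^{ab}\otimes_{\Z} H_m(B,\Z X)$, and then applies Shapiro. You apply Shapiro first and then use flatness via the universal coefficient theorem. Both are the same flatness argument.
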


\begin{proof} The same proof as the one in Lemma \ref{L1} works. Only we need to justify why $E_{2,1}^2 = 0$ and $E_{1,1}^2 = 0$. 

Note that for the abelianization $M^{ab}$ of $M$ we have that $$M^{ab} \simeq A^{ab} \otimes_{\mathbb{Z}} \Z X$$ as a $\Z B$-module, where the action of $B$ on the right hand side is given by the action of $B$ on $X$. We note that $$A^{ab} \simeq \varinjlim_i A_i,$$ where $\{ A_i \}$ is the direct system of all finitely generated subgroups of $A^{ab}$. 
 We claim that  $$H_j(B, A_i \otimes_{\mathbb{Z}} \Z X)
\simeq   A_i \otimes_{\mathbb{Z}}  H_j(B, \Z X) \hbox{ for } j \in \{ 1, 2\}.$$
Since
$A_i$ is a finite direct sum of copies of $\Z$ it suffices to prove the claim for $A_i = \Z$, but this case is obvious.  Then  for  $m \in \{ 1, 2 \}$
 $$E_{m,1}^2 = H_m(B, M^{ab}) = H_m(B, A^{ab} \otimes_{\mathbb{Z}} \Z X) \simeq 
H_m(B, (\varinjlim_i A_i) \otimes_{\mathbb{Z}} \Z X)
\simeq$$ $$ H_m(B, \varinjlim_i (A_i \otimes_{\mathbb{Z}} \Z X))
\simeq \varinjlim_i  H_m(B, A_i \otimes_{\mathbb{Z}} \Z X)
\simeq \varinjlim_i  (A_i \otimes_{\mathbb{Z}}  H_m(B, \Z X))
\simeq$$ $$ (\varinjlim_i  A_i) \otimes_{\mathbb{Z}}  H_m(B, \Z X)
= A^{ab} \otimes_{\mathbb{Z}}  H_m(B, \Z X).$$
Since we have a decomposition as disjoint union
$X = \cup_{j \in J} H_j   \backslash  B$
where $H_j$ are some stabilizers of elements of $X$ in  $B$, we get for $m \in \{ 1, 2 \}$
$$H_m(B, \Z X) \simeq \oplus_{j \in J} H_m(B, \Z  \otimes_{ \Z H_j} \Z B) \simeq  \oplus_{j \in J} H_m(H_j,  \Z)  = 0$$
hence
 $$E_{m,1}^2 \simeq A^{ab} \otimes_{\Z} H_m(B, \Z X) = 0$$ 
\end{proof}

 \medskip
 Recall that $C = A^{ab} \otimes_{\Z} A^{ab}$ and $  \mu : C \to C$ is
 the linear map defined by $\mu ( \bar{a}_1 \otimes \bar{a}_2) =  \bar{a}_2 \otimes \bar{a}_1$.
 Let $$Y = (X \times X \setminus diag(X))/ B$$
 where $diag(X) = \{ (x,x) \ | \ x \in X \}$ and $B$ acts diagonally on $diag(X)$,  $B$ acts on $X$ on the right. Thus $Y$  can be identified with  the $B$-orbits of $X \times X \setminus diag(X)$.
 Note that the map $X \times X \setminus diag(X) \to X \times X \setminus diag(X)$ that sends $(x_1, x_2) $ to $(x_2, x_1)$ is invariant under the diagonal $B$-action, hence it induces a map $$\epsilon : Y \to Y$$
 Define the $\Z$-module
 $$Z(A, B, X) = \oplus_{y \in Y} C y$$
 where each $Cy \simeq C$ and
 $I(A, B, X)$
 the $\Z$-submodule of $Z(A, B, X)$ generated by $\{ cy + \mu(c) \epsilon(y) \ | \ c \in C, y \in Y \}$.

\begin{theorem} \label{TT1} Suppose that for every stabilizer $H= stab_B(x)$ of an element $x \in X$ we have that    $ H_2(H, \z) = 0 = H_1(H, \z) $ and  the abelianisation $A^{ab}$ is torsion-free. Then 
 $$H_2(A \wr_X B, \Z) \simeq H_2(B, \Z) \oplus (\oplus_{X/ B} H_2(A, \Z)) \oplus ( Z(A, B, X) / I(A, B, X))$$ 
 where $\oplus_{X/ B} H_2(A, \Z)$ denotes the direct sum of $|X/ B|$ copies of $H_2(A, \Z)$.
\end{theorem}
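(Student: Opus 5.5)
The plan is to start from Theorem \ref{TT0}, which under exactly these hypotheses gives
$$H_2(A \wr_X B, \Z) \simeq H_0(B, H_2(M, \Z)) \oplus H_2(B, \Z), \qquad M = \oplus_{x \in X} A_x .$$
It then remains to compute $H_0(B, H_2(M, \Z))$. I would copy the computation in the proof of Theorem \ref{T1}, with the regular $B$-set $B$ replaced by $X$.

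First, fix a total order on $X$ and use the K\"unneth formula together with the fact that $H_2$ commutes with directed colimits. This gives a decomposition
$$H_2(M,\Z) \simeq (H_2(A,\Z)\otimes_{\Z} \Z X) \oplus V(A,B,X),$$
where
$$V(A,B,X) = \oplus_{x_1<x_2} H_1(A_{x_1},\Z)\otimes_{\Z} H_1(A_{x_2},\Z).$$
Via the Hopf formula, the element $\bar a_1\otimes \bar a_2$ in the $(x_1,x_2)$ summand is the class of the commutator $[a_1^{x_1},a_2^{x_2}]$.

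The identities $[u,v]^{-1}=[v,u]$ and $[a_1^{x_1},a_2^{x_2}]^b=[a_1^{x_1b},a_2^{x_2b}]$ show that this decomposition is $B$-equivariant. The $B$-action is inherited from the action on $X$, and the first summand is a genuine $\Z B$-submodule. They also give an isomorphism of $\Z B$-modules $V(A,B,X)\simeq \widetilde V/J$. Here
$$\widetilde V=\oplus_{(x_1,x_2)\in X\times X\setminus diag(X)} C\,(x_1,x_2)$$
carries the diagonal action, and $J$ is generated by the elements $c(x_1,x_2)+\mu(c)(x_2,x_1)$.

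Now take coinvariants. For the first summand, $H_0(B, H_2(A,\Z)\otimes \Z X)\simeq H_2(A,\Z)\otimes \Z(X/B)$, which is the direct sum of $|X/B|$ copies of $H_2(A,\Z)$. For the second, $H_0$ is right exact, so
$$H_0(B,V)\simeq H_0(B,\widetilde V)/\mathrm{Im}(H_0(B,J)).$$
Since $\widetilde V \simeq C\otimes_{\Z}\Z[X\times X\setminus diag(X)]$ with $B$ acting trivially on $C$, we get $H_0(B,\widetilde V)\simeq \oplus_{y\in Y} Cy = Z(A,B,X)$. The image of $J$ is generated by the elements $cy+\mu(c)\epsilon(y)$, because the swap commutes with the diagonal action and so descends to $\epsilon$ on $Y$. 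That image is exactly $I(A,B,X)$, hence $H_0(B,V)\simeq Z(A,B,X)/I(A,B,X)$. Combining these gives the stated formula.

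The main obstacle is the equivariance of the K\"unneth decomposition of $H_2(M,\Z)$. The splitting of $H_2$ of a direct product into the $H_2(A_x)$ part and the cross terms is natural, but the chosen order on $X$ is not $B$-invariant. I would handle this as in Theorem \ref{T1}: describe the cross terms intrinsically, through the presentation $\widetilde V/J$ via Hopf commutators, which does not depend on the order. I would also check that the torsion-freeness of $A^{ab}$ is needed only inside Theorem \ref{TT0}, so it plays no further role here. It also ensures that the $\mathrm{Tor}$ terms in K\"unneth vanish, although in degree $2$ they vanish anyway.
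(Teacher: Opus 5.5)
Your proposal is correct and follows the paper's proof essentially step for step. You reduce via Theorem \ref{TT0} to computing $H_0(B, H_2(M,\Z))$, split $H_2(M,\Z)$ by K\"unneth into $H_2(A,\Z)\otimes_{\Z}\Z X$ and the cross terms $V(A,B,X)\simeq \widetilde V/J$, and use right exactness of coinvariants to identify $H_0(B,V(A,B,X))$ with $Z(A,B,X)/I(A,B,X)$, exactly as the paper does; your remark that both summands are $B$-submodules, with the cross terms described order-independently through Hopf commutators, makes explicit a point the paper leaves implicit.
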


\begin{proof}
The starting point is  Theorem \ref{TT0}  that gives
$$H_2(G, \Z) \simeq H_0(B, H_2(M, \Z)) \oplus H_2(B, \Z)$$
Note that $$M = \oplus_X A \simeq \oplus_{x \in X} A_x$$ where each $A_x \simeq A$ as a group and we fix a linear (total) order of $X$ as a set (that is not linked with the group action).
 Then
 since $H_2$ commutes with direct limits (over directed sets) and using the K\"unneth formula we get that
 $$H_2(M, \Z) \simeq (\oplus_{x \in X} H_2(A_x, \mathbb{Z})) \oplus ( \oplus_{x_1 < x_2; x_1, x_2 \in X} H_1(A_{x_1}, \Z) \otimes_{\Z} H_1(A_{x_2}, \Z) )$$
 Note that
 $$\oplus_{x \in X} H_2(A_x, \mathbb{Z}) \simeq H_2(A, \Z) \otimes_{\Z} \Z X$$
  as a $\Z B$-module. Hence
  \begin{equation}
  \label{iso1} H_2(M, \Z) \simeq (H_2(A, \Z) \otimes_{\Z} \Z X) \oplus ( \oplus_{x_1 < x_2; x_1, x_2 \in B} H_1(A_{x_1}, \Z) \otimes_{\Z} H_1(A_{x_2}, \Z) ) \end{equation}
 Similar to the proof of Theorem \ref{T1}  there is an isomorphism of $\Z B$-modules
 \begin{equation} \label{iso2} 
 V(A, B, X) : = \oplus_{x_1 < x_2; x_1, x_2 \in B} H_1(A_{x_1}, \Z) \otimes_{\Z} H_1(A_{x_2}, \Z) \simeq\end{equation}
 $$\widetilde{V}(A,B, X)/ J$$ where
 $$\widetilde{V}(A,B, X) : =
 \oplus_{(x_1, x_2) \in X \times X \setminus diag(X)}  (A^{ab} \otimes_{\Z} A^{ab}) (x_1, x_2)
 $$
 where  each $(A^{ab} \otimes A^{ab}) (x_1, x_2)$ is a copy $A^{ab} \otimes A^{ab}$ indexed by $(x_1, x_2)$,  the $B$ action on $\widetilde{V}(A, B, X)$ is the diagonal action of $B$ on $X \times X$ and $J$ is the $\Z$-submodule generated by $ \bar{a}_1 \otimes \bar{a}_2 (x_1, x_2) + \bar{a}_2 \otimes \bar{a}_1 (x_2, x_1)$ for all $(x_1, x_2) \in X \times X \setminus diag(X)$ and $\bar{a}_1, \bar{a}_2 \in A^{ab}$.
 Then there is an exact sequence
 $$H_0(B, J) \to H_0(B, \widetilde{V}(A,B, X)) \to H_0(B, V(A,B, X)) \to 0$$
 Similar to the proof of Theorem \ref{T1}
 \begin{equation} \label{iso3} H_0(B, V(A,B, X)) \simeq Z(A, B, X) / I(A, B, X) \end{equation}

Finally by (\ref{iso1}), (\ref{iso2}) and (\ref{iso3})
$$H_0(B, H_2(M, \Z)) \simeq H_0(B, H_2(A, \Z) \otimes_{\Z} \Z X) \oplus H_0(B, V(A,B, X)) $$ $$\simeq 
(\oplus_{X/ B} H_2(A, \Z)) \oplus ( Z(A, B, X) / I(A, B, X)).$$
\end{proof}

\begin{lemma} \label{identification}
If $X$ is the disjoint union of $\cup_{j \in J} x_jB$ and $(J, <)$ is a linear order, then

a) we can identify $X $  with the disjoint union $\cup_{j \in J} H_j\backslash B$, where $H_j \backslash B = \{ H_j b \ | \ b \in B \}$ and $H_j = stab_B(x_j)$. 

b)   $Y = (X \times X \setminus diag(X) ) / B$ can be identified   with the disjoin union $$(\cup_{i \not= j; i,j \in J} H_i \backslash B / H_j) \cup (\cup_{j \in J} ((H_j \backslash B / H_j) \setminus \{ H_j \})) $$
where $H_i \backslash B / H_j$ is the double coset class $\{ H_i b H_j \ | \ b \in B \}$.

c) Under the above identification $\epsilon(H_i b H_j) = H_j b^{-1} H_i$, where $\epsilon : Y \to Y$ sends $(x_1, x_2) \to (x_2, x_1)$.
\end{lemma}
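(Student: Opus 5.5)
The proof is a direct set-theoretic computation with orbits of the right action of $B$ on $X$ and on $X \times X$. We take the three parts in order.

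For a), the map $H_j\backslash B \to x_jB$, $H_jb \mapsto x_jb$, is well defined and injective. Indeed, $x_jb = x_jb'$ if and only if $b'b^{-1} \in stab_B(x_j) = H_j$, which says exactly that $H_jb = H_jb'$. The map is clearly surjective and commutes with the right $B$-action. Taking the disjoint union over $j \in J$ gives a $B$-equivariant bijection $\cup_{j\in J} H_j\backslash B \to X$.

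For b), we first parametrize $B$-orbits of pairs. To a pair $(H_ib_1, H_jb_2) \in X \times X$ we assign the double coset $H_i b_1 b_2^{-1} H_j$.
\begin{itemize}
\item \emph{Well defined.} Replacing $b_1$ by $hb_1$ and $b_2$ by $h'b_2$ with $h \in H_i$, $h' \in H_j$ gives $h b_1 b_2^{-1} h'^{-1}$, which lies in the same double coset.
\item \emph{Constant on orbits.} Multiplying both coordinates on the right by $b \in B$ replaces $b_1b_2^{-1}$ by $b_1b(b_2b)^{-1} = b_1b_2^{-1}$.
\item \emph{Surjective onto $H_i\backslash B/H_j$.} The pair $(H_ib,H_j)$ maps to $H_ibH_j$.
\item \emph{Injective on orbits.} Suppose $H_ib_1b_2^{-1}H_j = H_ic_1c_2^{-1}H_j$. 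Acting by $b_2^{-1}$ and by $c_2^{-1}$ reduces both pairs to the form $(H_ib,H_j)$ and $(H_ib',H_j)$ with $b' = hbh'$, where $h\in H_i$ and $h'\in H_j$. Then acting by $h'^{-1} \in H_j$ fixes the second coordinate $H_j$ and sends $H_ib'$ to $H_ihb = H_ib$. Hence the two pairs lie in the same orbit.
\end{itemize}
So the $B$-orbits on $(H_i\backslash B)\times(H_j\backslash B)$ correspond bijectively to $H_i\backslash B/H_j$.

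It remains to remove the diagonal. A pair lies in $diag(X)$ exactly when $i=j$ and $H_ib_1 = H_ib_2$, that is, $b_1b_2^{-1} \in H_i$. Its double coset is then the trivial one, $H_i$. Conversely, for $i \neq j$ no pair is diagonal. The diagonal is a union of $B$-orbits, so removing it removes exactly the trivial double coset $H_j$ from $H_j\backslash B/H_j$ for each $j$ and nothing for $i\neq j$. This gives the stated disjoint union.

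For c), the swap sends $(H_ib_1,H_jb_2)$ to $(H_jb_2,H_ib_1)$. The latter corresponds to $H_j b_2 b_1^{-1} H_i = H_j (b_1b_2^{-1})^{-1} H_i$. Writing $b = b_1b_2^{-1}$, this says $\epsilon(H_ibH_j) = H_jb^{-1}H_i$.

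The only point requiring care is the injectivity check in b). Everything else is routine bookkeeping, and the linear order on $J$ plays no role beyond fixing the indexing.
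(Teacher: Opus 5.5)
Your proposal is correct and follows essentially the same route as the paper. Both use the map $(H_ib_1,H_jb_2)\mapsto H_ib_1b_2^{-1}H_j$ to identify $B$-orbits of pairs with double cosets, and both read off $\epsilon(H_ibH_j)=H_jb^{-1}H_i$ from the swap. The only difference is that you write out the verifications the paper leaves implicit: well-definedness, orbit-invariance, bijectivity, and the fact that the diagonal corresponds exactly to the trivial double coset $H_j$.
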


\begin{proof} a) is obvious;

b) for the element $(H_i b_1, H_j b_2) \in X \times X$ its $B$-orbit can be identified by the double coset class $H_i b_1 b_2^{-1} H_j$.

c) since $\epsilon$ is induced by the map $X \times X \to X \times X$ that sends $(H_i b_1, H_j b_2)$ to $(H_j b_2, H_i b_1)$ the result follows from the identification in b).
\end{proof}

Define
$$Y_1 = \{ y = H_i b H_j \ | \ b \in B, i,j \in J, i < j \}$$
and
$$ Y_2 = \{ y = H_j b H_j   \ | \ b \in  B \setminus H_j, j \in J  \}$$

\begin{lemma} \label{LL1} There is an isomorphism of abelian groups
$$Z(A, B, X) / I(A, B, X) \simeq$$ $$
(\oplus_{{  y \in Y_1} } Cy) \oplus (\oplus_{  \{ y, \epsilon(y) \}  \subseteq Y_2, y \not= \epsilon(y) } Cy ) \oplus (\oplus_{y = \epsilon(y) \in Y_2}  (A^{ab} \square_{\Z} A^{ab} )y  )$$
where each   $Cy \simeq C  = A^{ab} \otimes_{\Z} A^{ab}$ and   $(A^{ab} \square_{\Z} A^{ab} )y \simeq A^{ab} \square_{\Z} A^{ab} $.
\end{lemma}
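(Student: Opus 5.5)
The plan is to decompose $Z(A,B,X)$ according to the orbits of the involution $\epsilon$ on $Y$. Then $I(A,B,X)$ splits accordingly, and the quotient becomes a direct sum of local quotients, one for each $\epsilon$-orbit.

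First, by Lemma \ref{identification} b), $Y$ is the disjoint union of the double coset sets $H_i\backslash B/H_j$ for $i\neq j$ and $(H_j\backslash B/H_j)\setminus\{H_j\}$. By Lemma \ref{identification} c), $\epsilon(H_ibH_j)=H_jb^{-1}H_i$. Hence $\epsilon$ is an involution of $Y$ which interchanges $H_i\backslash B/H_j$ and $H_j\backslash B/H_i$ for $i\neq j$, and preserves each $(H_j\backslash B/H_j)\setminus\{H_j\}$.

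The $\epsilon$-orbits are therefore of three kinds:
\begin{itemize}
\item pairs $\{y,\epsilon(y)\}$ with $y\in H_i\backslash B/H_j$, $i<j$; each such orbit contains exactly one element of $Y_1$;
\item pairs $\{y,\epsilon(y)\}\subseteq Y_2$ with $y\neq\epsilon(y)$;
\item fixed points $y=\epsilon(y)\in Y_2$.
\end{itemize}

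For each orbit $O$, set $Z_O=\oplus_{y\in O}Cy$. The generator $cy+\mu(c)\epsilon(y)$ lies in $Z_O$ for $y\in O$. So $I(A,B,X)=\oplus_O I_O$, where $I_O$ is the subgroup of $Z_O$ generated by the generators indexed by elements of $O$. Consequently
\[
Z(A,B,X)/I(A,B,X)\simeq\oplus_O Z_O/I_O .
\]

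It remains to compute each local quotient.

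For a free orbit $O=\{y,\epsilon(y)\}$ with $y\neq\epsilon(y)$, the module $I_O$ is generated by the elements $cy+\mu(c)\epsilon(y)$. The generators $c\,\epsilon(y)+\mu(c)y$ give nothing new, because $\mu^2=\mathrm{id}$. The map $Cy\to Z_O/I_O$ is then an isomorphism. Its inverse is induced by $cy+c'\epsilon(y)\mapsto (c-\mu(c'))y$, which kills $I_O$. For orbits of the first kind we choose the representative $y\in Y_1$, and for the second kind any representative. This gives the first two summands of the statement.

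For a fixed point $y=\epsilon(y)$, we get $Z_O=Cy$ and $I_O=\{c+\mu(c)\}y$. The quotient $C/\langle c+\mu(c)\rangle$ is, by definition, $A^{ab}\square_{\Z}A^{ab}$, since $\mu(\bar a_1\otimes\bar a_2)=\bar a_2\otimes\bar a_1$.

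The main point needing care is the bookkeeping of the orbits. One must check that $\epsilon(y)\neq y$ whenever $y\in H_i\backslash B/H_j$ with $i\neq j$; this holds because $\epsilon(y)$ then lies in the different set $H_j\backslash B/H_i$. One must also check that $Y_1$ meets each such orbit exactly once. Both follow directly from the disjointness of the union in Lemma \ref{identification} b). Everything else is the routine direct-sum computation above, mirroring equation (\ref{final-eq}) in the proof of Theorem \ref{important}.
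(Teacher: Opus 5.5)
Your proposal is correct and follows the same route as the paper. Both arguments split $Z(A,B,X)$ and $I(A,B,X)$ along the $\epsilon$-orbits of $Y$, via Lemma \ref{identification}. Each free orbit contributes one copy of $C$, represented in $Y_1$ or by a chosen element of $Y_2$. Each fixed point $y=\epsilon(y)\in Y_2$ contributes $C/\langle c+\mu(c)\rangle\simeq A^{ab}\square_{\Z}A^{ab}$. You also supply details the paper's short proof leaves implicit: the direct-sum splitting of $I$, the explicit inverse $cy+c'\epsilon(y)\mapsto (c-\mu(c'))y$, and the check that $Y_1$ meets each mixed orbit exactly once.
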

\begin{proof}
It follows from the definitions of $Z(A, B, X)$ and  $I(A, B, X) $  and the identification in Lemma \ref{identification}. Note that when we take direct sum over 
$ y \in Y_1$ we take exactly one $Cy$ for every  $ y \in Y_1$.  Similarly for direct sum over $\{ y, \epsilon(y) \}  \subseteq Y_2, y \not= \epsilon(y)$. Finally when $y = \epsilon(y) \in Y_2$ we have $Cy / \Delta y$, where $\Delta$ is the $\Z$-submodule generated by $c + \mu(c)$, thus $C/ \Delta \simeq  A^{ab} \square_{\Z} A^{ab} $. 
\end{proof}

\section{Grigorchuk group}

Consider the Lysenok presentation \cite{Ly}  of the Grigorchuk group \cite{Grig} :
$$G = \langle a,b,c,d  \ | \ a^2, b^2, c^2, d^2, bcd, \sigma^i( (ad)^4), \sigma^i( (adacac)^4), i \geq 0 \rangle$$
where $\sigma(a) = aca, \sigma(b) = d, \sigma(c) = b, \sigma(d) = c$.

Note that the above presentation contains infinitely many relations, actually by \cite{Grig2} $G$ is not finitely presented. By the above presentation the abelianisation of $G$ is $$ G^{ab} \simeq (\mathbb{Z} / 2 \z) \oplus (\mathbb{Z} / 2 \z) \oplus (\mathbb{Z} / 2 \z)$$ generated by the images of $a,b,c$.

In Benli's paper \cite{Benli} finite quotients $G_n$ of $G$ are studied  for $n \geq 3$. Let
$F$ be the free group  with a free basis $a,b,c,d$ and $K_n$  the normal closure in $F$ of the set $$\{ a^2, b^2, c^2, d^2, bcd, u_0, \ldots, u_{n-3}, v_0, \ldots, v_{n-4}, w_n, t_n \}$$
where $$u_i = \sigma^i ( (ad)^4), v_i = \sigma^i( (adacac)^4), w_n = \sigma^{n-3} ( (ac)^4), t_n = \sigma^{n-3} ( (abac)^4).$$  By definition $$G_n = F/ K_n.$$

\begin{lemma} (\cite{Benli}, Lemma 3.6, Lemma 3.9, Thm 2.6) \label{Ben}  There is an isomorphism
$$K_n/ [K_n, F] \simeq \mathbb{Z}^4 \oplus M_n$$ where

1) $\mathbb{Z}^4$ is generated by $\{ B_1 = a^2, B_2 = b^2, B_3 = c^2, B_4 = bcd \}$, 

2) $M_n$ is  the torsion part of $K_n/ [K_n, F]$ and is   generated by $$Z_n = \{L = b^2 c^2 d^2 (bcd)^{-2}, W_n = \sigma^{n-3}((ac)^4 a^{-4}c^{-4}), T_n = \sigma^{n-3} ((abac)^4a^{-8}b^{-4}c^{-4}) \} $$ $$\cup \{  U_i = \sigma^i( (ad)^4 a^{-4} d^{-4}),  V_j = \sigma^j( ( adacac)^4 a^{-12} d^{-4} c^{-8})\}_{ 0 \leq i \leq n-3, 0 \leq j \leq n-4}$$ 

3) $$M_n \simeq (K_n \cap F') / [K_n, F] \simeq H_2(G_n, \Z) \simeq (\Z/ 2 \Z)^{ 2n - 2 }$$ and $$Z_n\hbox{ is a basis of }H_2(G_n, \Z) \hbox{ as a vector space over }\Z/ 2 \Z.$$
\end{lemma}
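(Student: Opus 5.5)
The statement is Lemma \ref{Ben}, which the paper quotes from Benli, so I will reconstruct Benli's argument rather than derive it from earlier sections. The tools are the Hopf formula and an explicit analysis of the finite group $G_n = F/K_n$.

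\textbf{Step 1: splitting off $\Z^4$.} The Hopf formula gives $H_2(G_n,\Z)\simeq (K_n\cap F')/[K_n,F]$. Since $G_n$ is finite, $K_n$ has finite index in $F$, and there is an exact sequence
$$0 \to (K_n\cap F')/[K_n,F] \to K_n/[K_n,F] \to K_nF'/F' \to 0 .$$
Here $K_nF'/F'$ is a finite-index subgroup of $F^{ab}\simeq\Z^4$, hence free abelian of rank $4$, so the sequence splits. Modulo $F'$, the elements $a^2,b^2,c^2,bcd$ map to $2e_a,2e_b,2e_c,e_b+e_c+e_d$. These span the image of $K_n$ in $F^{ab}$: every other generator ($d^2$, $u_i$, $v_j$, $w_n$, $t_n$) is a product of conjugates of squares or fourth powers of words, so its image has even exponent in each letter. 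Lifting $B_1,\dots,B_4$ therefore gives the complement $\Z^4$.

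\textbf{Step 2: normalising the torsion generators.} Each remaining normal generator $r$ of $K_n$ is multiplied by a monomial in $a^2,b^2,c^2,d^2,bcd$ with exponents chosen so that $r$ lands in $F'$. This produces the listed elements $L, U_i, V_j, W_n, T_n$, and modulo $[K_n,F]$ they generate $(K_n\cap F')/[K_n,F]$. For example, $(ad)^4a^{-4}d^{-4}\in F'$. Applying $\sigma$ respects these normalisations modulo $F'$ because $\sigma(a)=aca$ maps to $e_a+e_c+e_a$ modulo $F'$. Because $K_n/[K_n,F]$ is generated by the images of the normal generators, which are central there, $M_n$ is generated by $Z_n$ and $|Z_n| = 1+2+(n-2)+(n-3)=2n-2$.

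\textbf{Step 3: exponent $2$.} Next I show that each element of $Z_n$ has order at most $2$ in $K_n/[K_n,F]$. The idea is that each relator is a power of a word $x$ with $x^k\in K_n$. Conjugating $x^k$ by $x$, or by the involutive generator that inverts $x$, and working modulo $[K_n,F]$, gives relations such as $r\equiv r^{-1}$ once the $B$-corrections are accounted for.

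\textbf{Step 4: linear independence (the main obstacle).} It remains to prove that $Z_n$ is linearly independent over $\mathbb F_2$, that is, $H_2(G_n,\Z)\simeq(\Z/2\Z)^{2n-2}$ exactly. Generation alone gives only the upper bound. For the lower bound I would produce enough independent classes: either $2n-2$ central extensions of $G_n$ by $\Z/2\Z$ detecting the individual generators, or a computation of $H_2(G_n,\mathbb F_2)$ by induction on $n$. For the induction I would use the self-similar structure: $G_n$ acts on level $n$ of the binary tree, and the stabiliser of the first level embeds into $G_{n-1}\times G_{n-1}$. A spectral sequence or Mayer--Vietoris comparison between $G_n$ and $G_{n-1}$ should show that passing from $n-1$ to $n$ adds exactly two new independent classes ($U_{n-3}$ and $V_{n-4}$) while $W$ and $T$ shift under $\sigma$. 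I expect the delicate part to be checking that the new classes survive, which is precisely the content of Benli's Lemma 3.9.
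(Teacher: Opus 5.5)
The paper does not prove this lemma; it imports it from Benli. Had you simply cited Benli for the independence, you would be doing what the paper does. As a reconstruction, however, your proposal has a genuine gap.

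Steps 1--3 are essentially right. The image of $K_n$ in $F^{ab}$ is exactly the lattice spanned by $2e_a,2e_b,2e_c,e_b+e_c+e_d$, so $\Z^4$ splits off, and $Z_n$ generates $M_n$. Each element of $Z_n$ has order at most $2$. The actual mechanism is that, modulo the central squares of the letters, $x^{-1}$ is the reversed word, which is a cyclic shift (hence a conjugate) of $x$; for example $cacada$ for $x=adacac$ and $caba$ for $x=abac$.

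Two details need repair.
\begin{itemize}
\item For $i\ge 1$ the correcting factor in $U_i$ is $\sigma^i(a)^{-4}\sigma^i(d)^{-4}$, which is not literally a monomial in $a^2,b^2,c^2,d^2,bcd$. You must check that $\sigma^i(a)^2\in K_n$ and that $\sigma^i(a)^4$ is congruent modulo $[K_n,F]$ to such a monomial; induct on $\sigma^i(a)=\sigma^{i-1}(a)\sigma^{i-1}(c)\sigma^{i-1}(a)$.
\item $\sigma$ preserves neither $K_n$ nor $[K_n,F]$. Let $N$ be the normal closure of $a^2,d^2,(ad)^4$. Passing from $U_0^2\in[N,F]$ to $U_i^2\in[K_n,F]$ therefore requires $\sigma^i(N)\subseteq K_n$, which is where $i\le n-3$ enters. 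The same applies to $V_j$, $W_n$, $T_n$.
\end{itemize}

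The gap is Step 4, which is the actual content of the lemma. Steps 1--3 only show that $H_2(G_n,\Z)$ is an elementary abelian $2$-group spanned by the $2n-2$ elements of $Z_n$. The assertions $H_2(G_n,\Z)\simeq(\Z/2\Z)^{2n-2}$ and ``$Z_n$ is a basis'' are exactly the linear independence you leave open, and Corollary \ref{GG1} uses precisely that independence.

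Your sketch does not supply it, for three reasons.
\begin{itemize}
\item It presupposes that $F/K_n$ is the level quotient $G/\mathrm{St}_G(n)$ acting on the tree. That is itself a nontrivial theorem (the cited Thm 2.6), and you do not prove it.
\item A finite group admits no nontrivial amalgam decomposition, so there is no Mayer--Vietoris sequence to use.
\item $\mathrm{St}_{G_n}(1)$ sits in $G_{n-1}\times G_{n-1}$ only as a subdirect product. An LHS comparison over $C_2$ would need its Schur multiplier and the differentials, none of which you control.
\end{itemize}
``Adds exactly two new classes'' is therefore an expectation, not an argument.

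What is required is a genuine lower bound. Since $G_n^{ab}\simeq(\Z/2\Z)^3$, the five-term sequence gives $\dim_{\mathbb{F}_2}K_n/K_n^2[K_n,F]=4+\dim_{\mathbb{F}_2}(H_2(G_n,\Z)\otimes_{\Z}\mathbb{F}_2)$. So the claim is equivalent to the $2n+2$ defining relators of $K_n$ being linearly independent in $K_n/K_n^2[K_n,F]$. To prove that, you could, for each relator, exhibit a central extension of $G_n$ by $\Z/2\Z$, generated by lifts of $a,b,c,d$, in which that relator is nontrivial and all the others hold. Alternatively, compute $H_2(G_n,\mathbb{F}_2)$ in some other checkable way.
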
 
Remark. In \cite{Benli}  there is a misprint as $U_i$ is defined as $\sigma^i( (ad)^4 a^{-4} c^{-4})$ but in the concrete calculations (as in the middle of the proof of \cite[Lemma 3.7]{Benli}) it is used correctly as we stated it above i.e. $U_0 = (ad)^4 a^{-4} d^{-4}$. 

Note that a modification of Lysenok presentations is
$$G = \langle a,b,c,d \ | \ a^2, b^2, c^2, d^2, bcd, \sigma^i( (ad)^4 a^{-4} d^{-4}), \sigma^i( (adacac)^4 a^{-12} d^{-4} c^{-8}), i \geq 0 \rangle$$
Then $G \simeq F/ R$, where $R$ is the normal closure of $$E := \{ a^2, b^2, c^2, d^2, bcd, \sigma^i( (ad)^4 a^{-4} d^{-4}), \sigma^i( (adacac)^4 a^{-12} d^{-4} c^{-8}) \}_{ i \geq 0}$$ 
Let $$\pi : F \to F^{ab} = F/ F' \simeq \Z a \oplus \Z b \oplus \Z c \oplus \Z d$$ be the canonical projection. Note that  $\sigma^i( (ad)^4 a^{-4} d^{-4}), \sigma^i( (adacac)^4 a^{-12} d^{-4} c^{-8}) \in Ker (\pi) = F'$ and   $\pi(E)   = \{ 2a, 2b, 2c, 2d, b+ c + d \}$, here $2(b + c + d)$ is $\Z$-linearly dependent on $ \{ 2a, 2b, 2c, 2d \}$. Hence
$(R \cap F')/ [R, F]$ is an abelian group generated by 
\begin{equation} \label{defZ}  Z = \{  b^2 c^2 d^2 (bcd)^{-2}, \sigma^i( (ad)^4 a^{-4} d^{-4}), \sigma^i( (adacac)^4 a^{-12} d^{-4} c^{-8}) \}_{ i \geq 0} \}\end{equation}

\begin{lemma}\label{basis}  $Z$ is a basis of $H_2(G, \Z)$ as a vector space over $\Z/ 2 \Z$.
\end{lemma}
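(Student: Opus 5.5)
The proof has three parts: generation, that every element of $Z$ has order dividing $2$, and linear independence of $Z$ over $\Z/2\Z$.

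\emph{Generation.} This was already observed just before the statement. By Hopf's formula $H_2(G,\Z)\simeq (R\cap F')/[R,F]$. Since $\pi(E)$ spans a free abelian subgroup whose only dependence involves $bcd$, the group $(R\cap F')/[R,F]$ is generated by the set $Z$ of (\ref{defZ}). It remains to show that $Z$ consists of elements of order $2$ and is linearly independent over $\Z/2\Z$.

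\emph{Independence, via Benli's quotients.} Since each $G_n$ is a quotient of $G$ compatible with the generators $a,b,c,d$, we have $R\subseteq K_n$. Hence there are induced maps
$$\phi_n : H_2(G,\Z)\simeq (R\cap F')/[R,F]\to (K_n\cap F')/[K_n,F]\simeq H_2(G_n,\Z)\simeq M_n .$$
Under $\phi_n$ the generators behave as follows:
\begin{itemize}
\item $b^2c^2d^2(bcd)^{-2}$ goes to $L$;
\item $\sigma^i((ad)^4a^{-4}d^{-4})$ goes to $U_i$ for $i\le n-3$;
\item $\sigma^j((adacac)^4a^{-12}d^{-4}c^{-8})$ goes to $V_j$ for $j\le n-4$.
\end{itemize}
These are literally the same words in $F$. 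Now take any finite subset $Z_0\subseteq Z$ and an integer combination $\sum_{z\in Z_0} m_z z$ that is trivial in $H_2(G,\Z)$. Choose $n$ large enough that every $z\in Z_0$ maps to a distinct element of Benli's basis $Z_n$. By Lemma \ref{Ben}, $Z_n$ is an $\mathbb{F}_2$-basis of $M_n\simeq(\Z/2\Z)^{2n-2}$, so all $m_z$ must be even. This gives independence modulo $2$. Moreover, once we know $H_2(G,\Z)$ has exponent $2$, it shows $Z$ is independent in the $\mathbb{F}_2$-vector space $H_2(G,\Z)$.

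\emph{Exponent $2$: the main obstacle.} The maps $\phi_n$ cannot show that $2z=0$ in $H_2(G,\Z)$, since they only see images. So I will prove $z^2\in[R,F]$ directly for each $z\in Z$. First, $\sigma$ extends to an endomorphism of $F$. It sends every element of $E$ into $R$: for example $\sigma(a^2)=ac^2a$, $\sigma(bcd)=dbc$ is conjugate to $bcd$, and the $\sigma^i$-families are shifted to $\sigma^{i+1}$. Hence $\sigma(R)\subseteq R$ and $\sigma([R,F])\subseteq[R,F]$, so $\sigma$ induces an endomorphism of $(R\cap F')/[R,F]$. It therefore suffices to check that $2L$, $2U_0$ and $2V_0$ lie in $[R,F]$.

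For $L$ this is a direct manipulation. Working modulo $[R,F]$, the elements of $R$ are central, and we use the relations $x^2\in R$ and $bcd\in R$. For $U_0$ and $V_0$ I will follow Benli's computation in \cite[Lemma 3.7]{Benli}, which shows $2U_0, 2V_0\in[K_n,F]$. The key point to check is that these identities only use the relators $a^2,b^2,c^2,d^2,bcd$ together with $u_0,v_0$ and their conjugates, and never use $w_n$ or $t_n$. All of these lie in $R$, so the same identities give $2U_0, 2V_0\in[R,F]$.

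If that verification proves awkward, the fallback is a direct argument. Modulo $[R,F]$ one can rewrite $(ad)^8$ as a product of commutators of elements of $R$ with elements of $F$, using $(ad)^4\in R$ and $a^2,d^2\in R$. This shows $U_0^2\equiv (ad)^8a^{-8}d^{-8}$ is trivial, and the analogous argument handles $V_0$ with $(adacac)^4\in R$.

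Combining the three parts, $Z$ generates $H_2(G,\Z)$, $H_2(G,\Z)$ has exponent $2$, and $Z$ is linearly independent over $\Z/2\Z$. Hence $Z$ is a basis of $H_2(G,\Z)$ as a vector space over $\Z/2\Z$.
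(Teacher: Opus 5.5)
Your proof is correct, but it takes a genuinely different route from the paper's; in effect it runs the paper's logic in reverse.

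\textbf{The paper's route.} The paper takes exponent $2$ from Grigorchuk's computation $H_2(G,\Z)\simeq(\Z/2\Z)^{\infty}$. It gets independence from the Bartholdi--Siegenthaler decomposition $H_2(G,\Z)\simeq \Z/2\Z\oplus R_0\oplus R_0$ over the PID $R_0=(\Z/2\Z)[\sigma]$. After checking $\sigma(x)=x$ for the class $x$ of $b^2c^2d^2(bcd)^{-2}$, the equality $H_2(G,\Z)=R_0x+R_0y+R_0z$ forces $Z$ to be a basis. Benli's quotients enter only afterwards, in Corollary \ref{GG1}, to deduce injectivity of $\theta$ from the basis property.

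\textbf{Your route.} You use Lemma \ref{Ben} directly to show that any relation among elements of $Z$ has even coefficients. This means the content of Corollary \ref{GG1} comes out along the way. You then prove exponent $2$ by hand. So you need neither Grigorchuk's computation nor the $R_0$-module structure, nor the implicit identification of the Bartholdi--Siegenthaler $\sigma$-action with the substitution. In fact you recover $H_2(G,\Z)\simeq(\Z/2\Z)^{\infty}$ instead of citing it. The price is that the exponent-$2$ step must actually be carried out.

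\textbf{Make your fallback the main line.} Your fallback is the argument that works, so there is no need to audit Benli's Lemma 3.7. In $F/[R,F]$ the images of elements of $R$ are central.
\begin{itemize}
\item For $U_0$: $a^{-1}(ad)a=da\equiv a^2d^2(ad)^{-1}$. Conjugating the central element $(ad)^4$ by $a$ gives $(ad)^8\equiv a^8d^8$, so $U_0^2\in[R,F]$.
\item For $V_0$: the reverse of $adacac$ is a cyclic rotation of it, namely $(ada)^{-1}(adacac)(ada)=cacada\equiv a^6c^4d^2(adacac)^{-1}$. Since $(adacac)^4\in R$ is central, this gives $(adacac)^8\equiv a^{24}c^{16}d^8$, so $V_0^2\in[R,F]$.
\item For $L$: $(bc)^2\equiv(bcd)^2d^{-2}$ is central, and $b^{-1}(bc)b\equiv b^2c^2(bc)^{-1}$. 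Hence $(bc)^4\equiv b^4c^4$, and since $L\equiv b^2c^2(bc)^{-2}$ we get $L^2\in[R,F]$.
\end{itemize}
Your $\sigma$-equivariance argument then handles all $\sigma^i$-translates.

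\textbf{A minor slip.} In $F$ we have $\sigma(a^2)=aca^2ca$, not $ac^2a$. It still lies in $R$, since $aca^2ca=(ac)a^2(ac)^{-1}\cdot ac^2a^{-1}\cdot a^2$, so the conclusion $\sigma(R)\subseteq R$ is unaffected.
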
  

\begin{proof}
By the original Grigorchuk paper \cite{Grig2} $$H_2(G, \Z) \simeq (\Z/ 2 \Z)^{\infty}.$$ 
 By the remark 5.10 from Bartholdi-Siegenthaler paper \cite{B1} we have that $H_2(G, \Z)$ is a $R_0$-module for $R_0 = (\Z/ 2 \Z) [\sigma] $ and $$ H_2(G, \Z) \simeq  (\Z/ 2 \Z) \oplus R_0 \oplus R_0$$
as $R_0$-module, where $\Z/ 2 \Z$ is the trivial $R_0$-module i.e. $\Z/ 2 \Z \simeq ((\Z/ 2 \Z) [\sigma]/ ( \sigma - 1))$. 

\medskip
{\bf   Claim} $ \sigma(b^2 c^2 d^2 (bcd)^{-2}) \in  b^2 c^2 d^2 (bcd)^{-2} [R, F]$

\medskip
{\bf  Proof} Note that 
$\sigma(b^2 c^2 d^2 (bcd)^{-2}) = d^2 b^2 c^2 (dbc)^{-2},$ hence
$$(b^2 c^2 d^2 (bcd)^{-2})(d^2 b^2 c^2 (dbc)^{-2})^{-1} =$$ $$ (b^2 c^2 d^2) (bcd)^{-2} (dbc)^2(d^2 b^2 c^2)^{-1} \equiv(bcd)^{-2} (dbc)^2 \hbox{ mod } [R,F]
$$
where above we used $b^2, c^2, d^2 \in R$, so  $ b^2 f \equiv f b^2, c^2 f \equiv f c^2, a^2 f \equiv a^2 f $ mod $[R, F]$ for every $f \in F$. Finally since $bcd \in R$ we have  $(bcd)^{-1} d = d (bcd)^{-1} $ mod $ [R,F]$, hence
$$
(bcd)^{-2} (dbc)^2 =  (bcd)^{-1} (bcd)^{-1} dbc dbc \equiv (bcd)^{-1} d (bcd)^{-1} bc dbc
=$$ $$ (bcd)^{-1} d d^{-1} c^{-1} b^{-1} bc dbc = (bcd)^{-1} dbc \equiv d (bcd)^{-1} bc = 1 \hbox{ mod } [R,F]$$
This completes the proof of the claim.

\medskip
Let $x, y, z$ be the images of $b^2 c^2 d^2 (bcd)^{-2},  (ad)^4 a^{-4} d^{-4}$ and $(adacac)^4 a^{-12} d^{-4} c^{-8}$ in $H_2(G, \Z) \simeq (R \cap F')/ [R, F]$. Write $\overline{Z}$ for the image of $Z$ in $H_2(G, \Z)$.

Note that $R_0$ is a principal ideal domain, hence every finitely generated $R_0$-module is a direct sum of cyclic $R_0$-modules and this decomposition is unique. The generating set $Z$ is $\sigma$-invariant modulo $[R, F]$. Note that by the Claim  $\sigma(x) = x$ and 
$$H_2(G, \Z)  = \langle \overline{Z} \rangle  = R_0 x + R_0 y + R_0 z$$
where $R_0 x \simeq \Z/ 2 \Z$.
On the other hand by the $R_0$-module structure of $H_2(G, \Z)$ as  $   (\Z/ 2 \Z) \oplus R_0 \oplus R_0$ we conclude that $Z$ is a basis of $H_2(G, \Z)$ as a vector space over $\Z/ 2 \Z$.
 \end{proof}

 Recall that for a prime integer $p$ a group $G$  is residually $p$-finite if $\cap_i N_i = 1$ where $G/ N_i$ are finite $p$-groups, i.e., the canonical map $G \to \widehat{G}_p$ is injective, where $\widehat{G}_p$ is the pro-$p$ completion of $G$.

\begin{cor} \label{GG1} Let  $G$ be the Grigorchuk group. Then
 the map $\theta: H_2(G, \Z) \to \varprojlim_n H_2(G_n, {\Z})$  is injective.
In particular $G \wedge G$ is residually  2-finite.
\end{cor}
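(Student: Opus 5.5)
The plan is to compare the explicit $\Z/2\Z$-bases given by Lemma \ref{basis} and Lemma \ref{Ben} via Hopf's formula. We have $G = F/R$ and $G_n = F/K_n$ with the same free group $F$ on $a,b,c,d$. First check that $R \subseteq K_n$, so that the identity of $F$ induces the epimorphism $G \to G_n$. This holds if $G_n$ is a quotient of $G$; since $K_n$ already contains $a^2,b^2,c^2,d^2,bcd$ and the first $u_i,v_j$, this reduces to the remaining relators $\sigma^i((ad)^4)$, $\sigma^i((adacac)^4)$ being trivial in $G_n$, as in Benli's setup. Under this map, the induced homomorphism
$$H_2(G,\Z)\simeq (R\cap F')/[R,F] \to (K_n\cap F')/[K_n,F]\simeq H_2(G_n,\Z)$$
is simply $r[R,F]\mapsto r[K_n,F]$, by naturality of the Hopf formula.

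Next compute the images of the basis $Z$ from (\ref{defZ}). The element $b^2c^2d^2(bcd)^{-2}$ maps to $L$. For $0\le i\le n-3$ the element $\sigma^i((ad)^4a^{-4}d^{-4})$ maps to $U_i$, and for $0\le j\le n-4$ the element $\sigma^j((adacac)^4a^{-12}d^{-4}c^{-8})$ maps to $V_j$. All of these lie in the basis $Z_n$ of Lemma \ref{Ben}. The remaining elements of $Z$, namely $\sigma^i(\ldots)$ with $i>n-3$ or $j>n-4$, map to some elements of $H_2(G_n,\Z)$ that we need not control.

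Now take a nonzero $h\in H_2(G,\Z)$. By Lemma \ref{basis}, $h$ is a finite nonzero $\Z/2\Z$-combination of elements of $Z$, all with indices at most some $N$. Choose $n$ with $n-4\ge N$. Then every element of $Z$ involved in $h$ maps to a distinct element of the basis $Z_n$, namely $L$, $U_i$ or $V_j$, so the image of $h$ in $H_2(G_n,\Z)$ is nonzero. Hence $\theta(h)\neq 0$ and $\theta$ is injective.

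For the "in particular" statement, note that each $G_n$ is a finite $2$-group and that the $G_n$ determine the profinite (equivalently pro-$2$) topology on $G$, as stated in the introduction. Then $\varprojlim_n H_2(G_n,\Z)$ is the inverse limit required in Theorem \ref{A-L-Dc}, 1) with $p=2$; by the Remark after Theorem A, the limit depends only on the completion. Since $G$ is residually $2$-finite, Theorem \ref{A-L-Dc}, 1) gives that $G\wedge G$ is residually $2$-finite.

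The main obstacle is the bookkeeping in the first step. One must verify that $R\subseteq K_n$, and that the identification of Hopf-formula elements in $H_2(G_n,\Z)$ agrees with Benli's generators. In particular the corrected form of $U_i$ from the Remark after Lemma \ref{Ben} is needed here.
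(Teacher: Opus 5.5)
Your proposal is correct and follows essentially the same route as the paper: a nonzero class is a finite $\mathbb{Z}/2\mathbb{Z}$-combination of elements of $Z$, and for $n$ large these all map to distinct elements of Benli's basis $Z_n$, so $\theta$ is injective; then Theorem \ref{A-L-Dc}, 1) with $p=2$ (using that $\varprojlim_n G_n$ is both the profinite and pro-$2$ completion) gives residual $2$-finiteness. You are somewhat more explicit than the paper about the Hopf-formula description of $H_2(G,\Z)\to H_2(G_n,\Z)$ and about the inclusion $R\subseteq K_n$, both of which the paper leaves implicit.
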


\begin{proof}
Note that by  the definition of the set $Z$ in (\ref{defZ})  we have that for a finite subset $Z_0$ of $Z$ there is sufficiently big $n$ such that $Z_0$ is a subset of $Z_n$ defined in Lemma \ref{Ben}. Then by Lemma \ref{basis} and  since $Z_n$ is a basis of $H_2(G_n, \Z)$ we conclude  that the map  $\theta$ is injective.  

 We observe that each $G_n$ is a 2-group,  $\varprojlim_n G_n$ is both the profinite and pro-2 completion of $G$. Since $G$ is a self-similar group, it is residually finite. This combined with the fact $G$ is a $2$-group implies it is residually $2$-finite  and $ \widehat{G} \simeq \widehat{G}_2$.   Finally we can apply Theorem \ref{A-L-Dc}, 1) and deduce that $G \wedge G$ is residually  2-finite.
\end{proof}

 \begin{prop} \label{GG20}
 Let  $G$ be the Grigorchuk group. Then $G \otimes G \simeq \Delta(G) \times (G \wedge G)$, in particular $G \otimes G$ is residually  2-finite.
\end{prop}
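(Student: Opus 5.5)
The plan is to invoke Theorem \ref{2tor} of Blyth, Fumagalli and Morigi through its second hypothesis, not the first. $G^{ab} \simeq (\Z/2\Z)^3$ is finitely generated but has 2-torsion, so the ``no elements of order 2'' alternative is unavailable. We must instead show that $G'$ has a complement in $G$, i.e.\ that the projection $G \to G/G' \simeq (\Z/2\Z)^3$ splits. Once the splitting holds, Theorem \ref{2tor} gives $G \otimes G \simeq \Delta(G) \times (G \wedge G)$.

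To build the splitting, I would look for an elementary abelian subgroup $E \leq G$ of order $8$ that maps isomorphically onto $G^{ab}$. Since $G^{ab}$ is generated by the images of $a, b, c$ with $d = bc$, the natural candidate is generated by $b, c, d$ together with something carrying the $a$-coordinate. But $\langle b,c,d\rangle \simeq (\Z/2\Z)^2$ (Klein four group), and $a$ does not commute with $b$. So I would try $E = \langle x, b, c\rangle$ where $x$ is an involution congruent to $a$ modulo $G'$ that commutes with $b$ and $c$.

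If no such $x$ exists, the fallback is to look for any subgroup $E$ of order $8$ whose image in $G^{ab}$ is all of $(\Z/2\Z)^3$. Natural places to look are the stabilizer of the first level, or elements acting diagonally such as $(a, a)$-type elements in the self-similar description. Since $E$ must be abelian and have exponent $2$ to map isomorphically onto $G^{ab}$, the task reduces to finding three commuting involutions whose images are independent in $G^{ab}$. This is a finite check using the wreath recursion $b = (a,c)$, $c = (a,d)$, $d = (1,b)$. I expect this search, and the verification that the candidate elements commute and are independent, to be the main obstacle.

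If the complement cannot be found directly, I would work from the proof of Theorem \ref{2tor}. The splitting there comes from the isomorphism $\Delta(G^{ab}) \simeq \Delta(G)$ together with a retraction of $G \otimes G \to \Delta(G)$ built from $G^{ab}\otimes G^{ab}$. For $G^{ab} = (\Z/2\Z)^3$, one would compute $\Gamma(G^{ab})$ and check directly, using the relations $a^2 = b^2 = c^2 = d^2 = 1$, that the relevant map is split.

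For the ``in particular'' clause, Corollary \ref{GG1} gives that $G \wedge G$ is residually 2-finite. The group $\Delta(G)$ is a quotient of $\Gamma(G^{ab})$, which is a finite 2-group: $\Gamma((\Z/2\Z)^3)$ has order $4^3 \cdot 2^3$. So $\Delta(G)$ is a finite 2-group. A direct product of a finite 2-group and a residually 2-finite group is residually 2-finite, which gives the claim for $G \otimes G$.
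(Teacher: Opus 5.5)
There is a genuine gap: your main route cannot work, because $G'$ has no complement in the Grigorchuk group. The search you call the main obstacle cannot succeed. To see this, let $\psi\colon \mathrm{St}_G(1)\to G\times G$, $y\mapsto(y_0,y_1)$, be the wreath recursion. Let $\chi_a,\chi_{bc},\chi_{bd},\chi_{cd}\colon G\to\Z/2\Z$ be the homomorphisms sending exactly the indicated letters among $a,b,c,d$ to $1$ and the others to $0$. Check on the generators $b,c,d,b^a,c^a,d^a$ of $\mathrm{St}_G(1)=\ker\chi_a$, using $b=(a,c)$, $c=(a,d)$, $d=(1,b)$. This gives, for all $y\in\mathrm{St}_G(1)$,
\[
\chi_a(y_0)+\chi_a(y_1)=\chi_{bc}(y),\qquad \chi_{bd}(y_0)+\chi_{bd}(y_1)=\chi_{cd}(y).
\]
Suppose $y$ commutes with some $x\in G\setminus\mathrm{St}_G(1)$. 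Then $y_0=y_1^{x_1}$ for a section $x_1\in G$ of $x$, so both left-hand sides vanish and $\chi_a(y)=\chi_{bc}(y)=\chi_{cd}(y)=0$. These three characters form a basis of $\mathrm{Hom}(G^{ab},\Z/2\Z)$ and $G^{ab}$ is elementary abelian, so $y\in G'$. A complement $E\simeq(\Z/2\Z)^3$ would contain some $x\notin\mathrm{St}_G(1)$ centralizing the order-$4$ group $E\cap\mathrm{St}_G(1)$. That group would then lie in $E\cap G'=1$, a contradiction. So no abelian subgroup of $G$ maps onto $G^{ab}$, neither hypothesis of Theorem \ref{2tor} holds for $G$, and your first two strategies are dead ends.

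Your last fallback points at the right mechanism but is not yet an argument. What is missing is the precise criterion the paper uses, the Observation after \cite[Thm. 1.3]{B-F-M}. It says: if $G^{ab}=\bigoplus_{i}\langle x_iG'\rangle$ is finitely generated with $|x_i|=|x_iG'|$ for all $i$, then $\Delta(G)=\Delta(G^{ab})$ and $G\otimes G\simeq\Delta(G)\times(G\wedge G)$. The retraction $G\otimes G\to G^{ab}\otimes G^{ab}\to\Delta(G)$ is defined on basis tensors. It sends $x_iG'\otimes x_iG'\mapsto x_i\otimes x_i$, and sends $x_iG'\otimes x_jG'$ to $(x_i\otimes x_j)(x_j\otimes x_i)$ or to $1$ according as $i<j$ or $i>j$. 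The only constraint for well-definedness is $(x_i\otimes x_i)^{n_i}=x_i^{n_i}\otimes x_i=1$, where $n_i=|x_iG'|$. No commutation among the $x_i$ is required, which is why this is strictly weaker than having a complement. For the Grigorchuk group take $x_1,x_2,x_3=a,b,c$: they are involutions whose images form a basis of $G^{ab}\simeq(\Z/2\Z)^3$. So the relations $a^2=b^2=c^2=1$ you mention are exactly the needed input, but only through this criterion.

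Your treatment of the ``in particular'' clause coincides with the paper's.
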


\begin{proof}
By the Observation after the proof of \cite[Thm. 1.3]{B-F-M} if the abelianization $G^{ab}$ is finitely generated, $G^{ab} = \oplus_{1 \leq i \leq m} \langle x_i G' \rangle
$ with $| x_i | = | x_i G'|$ for all $ 1 \leq i \leq m$ then $\Delta(G) = \Delta(G^{ab})$ and $G \otimes G \simeq \Delta(G) \times (G \wedge G)$.

In our case $G = \langle a, b,c \rangle$ since $ bcd = 1$ and $d$ is redundant as a generator. Furthermore $|a| = |b| = |c| = 2 = |a G'| = | b G'| = | c G' |$, so the above paragraph applies. Since $\Delta(G)$ is a quotient of the Whitehead quadratic functor $\Gamma(G^{ab})$,  $G^{ab}$ is an abelian  finite 2-group and hence $\Gamma(G^{ab})$ is an abelian  finite 2-group,
we conclude that $\Delta(G)$ is an abelian $2$-finite group, hence residually 2-finite. This combined with the conclusion of  Corollary \ref{GG1} that $G \wedge G$ is residually  $2$-finite implies that $G \otimes G$ is residually $2$-finite.
\end{proof} 
 \begin{prop} \label{GG30} Let  $G$ be the Grigorchuk group and $q\geq 2$ be an integer. Then  $G \wedge^q G$ is residually $2$-finite. \end{prop}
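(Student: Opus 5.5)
The plan is to apply Proposition \ref{iff25}, b) with $p=2$. The Grigorchuk group $G$ is finitely generated (by $a,b,c$) and residually $2$-finite (as observed in the proof of Corollary \ref{GG1}), and $\widehat{G} \simeq \widehat{G}_2 \simeq \varprojlim_n G_n$. So it suffices to show that the map $H_2(G, \mathbb{F}_q) \to H_2(\widehat{G}_2, \mathbb{F}_q) \simeq \varprojlim_n H_2(G_n, \mathbb{F}_q)$ is injective. This limit identification holds because each $G_n$ is finite, profinite homology with finite coefficients commutes with inverse limits, and the $G_n$ are cofinal among the finite $2$-quotients by Corollary \ref{GG1}.

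To prove injectivity I would use the universal coefficients theorem, naturally in the group. For $G$ and for each $G_n$ there are natural short exact sequences
$$0 \to H_2(-,\Z)\otimes_{\Z} \mathbb{F}_q \to H_2(-, \mathbb{F}_q) \to Tor_1^{\Z}(H_1(-,\Z), \mathbb{F}_q) \to 0 .$$
The $Tor$ terms are easy to control. We have $H_1(G,\Z) = G^{ab} \simeq (\Z/2\Z)^3$ generated by $a,b,c$, and for $n\geq 3$ the map $G \to G_n$ induces an isomorphism $G^{ab} \to G_n^{ab}$, since $K_n$ and $R$ have the same image $\pi(E)$ in $F^{ab}$. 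So the $Tor$ terms map isomorphically. By a five-lemma type argument (injectivity on the sub and quotient terms implies injectivity in the middle), passing to the inverse limit, which is left exact, it remains to show that $H_2(G,\Z)\otimes \mathbb{F}_q \to \varprojlim_n H_2(G_n,\Z)\otimes \mathbb{F}_q$ is injective.

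If $q$ is odd, then $H_2(G,\Z)\otimes\mathbb{F}_q = 0$, since $H_2(G,\Z)$ is an elementary abelian $2$-group by Lemma \ref{basis}. If $q$ is even, then $V\otimes \mathbb{F}_q \simeq V$ for any elementary abelian $2$-group $V$. This applies both to $H_2(G,\Z)$ and to $H_2(G_n,\Z)$ by Lemma \ref{Ben}, so the required map is just $\theta$, which is injective by Corollary \ref{GG1}. The explicit argument is the same: a finite subset $Z_0$ of the basis $Z$ lies in $Z_n$ for large $n$, and $Z_n$ is a basis of $H_2(G_n,\Z)$.

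The main obstacle is the bookkeeping in the diagram chase. One must check that the universal coefficient sequences are compatible with the maps $G\to G_n$ and with the transition maps $G_{n+1}\to G_n$. One must also check that injectivity survives the inverse limit. This works because a nonzero element of $H_2(G,\mathbb{F}_q)$ already has nonzero image in some single $H_2(G_n,\mathbb{F}_q)$: either its $Tor$ component is nonzero, and this is detected for every $n\geq 3$, or it lies in $H_2(G,\Z)\otimes\mathbb{F}_q$, where the basis argument gives a suitable $n$. With this established, Proposition \ref{iff25}, b) gives that $G \wedge^q G$ is residually $2$-finite.
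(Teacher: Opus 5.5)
Your proposal is correct and follows essentially the paper's argument. Both use the universal coefficient sequences for $G$ and the $G_n$, the isomorphism $G^{ab}\to G_n^{ab}$ on the $Tor$ terms, and injectivity of $\theta$ from Corollary \ref{GG1} on the $H_2(-,\Z)\otimes\mathbb{F}_q$ terms, then conclude with Proposition \ref{iff25}, b). The differences are cosmetic: the paper first writes $q=2^s m$ and kills the odd part, while you treat $q$ directly by parity, and you replace the paper's left-exactness of $\varprojlim$ with an elementwise diagram chase.
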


\begin{proof}
Let $q = 2^s m$ with $m$ odd. Then $\Z/ q \Z \simeq ( \Z/ 2^s \Z) \oplus (\Z/ m \Z)$ and there are isomorphisms 
$$H_2(G, \Z/ q \Z) \simeq H_2(G,( \Z/ 2^s \Z) \oplus (\Z/ m \Z)) \simeq   H_2(G, \Z/ 2^s \Z) \oplus H_2(G, \Z/ m \Z)$$
and 
$$H_2(\widehat{G}, \Z/ q \Z) \simeq H_2(\widehat{G},( \Z/ 2^s \Z) \oplus (\Z/ m \Z) \simeq   H_2(\widehat{G}, \Z/ 2^s \Z) \oplus H_2( \widehat{G}, \Z/ m \Z)$$

Since $2 H_2(G, \z) = 0$, $G^{ab}$ is a 2-group and $ \widehat{G}$ is a pro-2 group we have that  $H_2(G, \Z/ m \Z) =0 = H_2( \widehat{G}, \Z/ m \Z)$. 

\medskip
{\bf   Claim} The map 
$\rho: H_2(G, \z/ 2^s \Z) \to H_2(\widehat{G}, \Z / 2^s \Z) \hbox{
is injective.}$

{\bf   Proof}
By the universal coefficients theorem  there is an exact sequence
$$0 \to H_2(G, \z) \otimes (\z/ 2^s \z) \to H_2(G, \z/ 2^s \Z) \to Tor_1^{\z}(G^{ab}, \Z/ 2^s \Z) \to 0$$
Since $G^{ab} \simeq (\z/ 2 \Z)^3$ we get  $Tor_1^{\z}(G^{ab}, \Z/ 2^s \Z) \simeq G^{ab}$ and since $2 H_2(G, \Z) = 0$ the above exact sequence can be rewritten as 
\begin{equation} \label{eqeqeq} 0 \to H_2(G, \z)  \to H_2(G, \z/ 2^s \Z) \to G^{ab} \to 0\end{equation}
If we prove that there is an exact sequence
$$0 \to  \varprojlim_n H_2(G_n, \Z) \to H_2(\widehat{G}, \z/ 2^s \Z) \to \widehat{G}^{ab} $$
then since the map $ H_2(G, \z) \to \varprojlim_n H_2(G_n, \Z)$ is injective  and $G^{ab} \to \widehat{G}^{ab}$ is an isomorphism, we conclude that $\rho$ is injective and the last exact sequence is actually a short exact sequence.

We can apply the universal coefficients theorem for $G_n$ and using that $2 H_2(G_n, \Z) = 0$ by Lemma \ref{Ben}, $G_n^{ab} \simeq G^{ab} \simeq (\Z/ 2 \Z)^3$ so
there is an exact sequence
$$0 \to H_2(G_n, \z)  \to H_2(G_n, \z/ 2^s \Z) \to {G}^{ab} \to 0$$
Then take inverse limit and using that inverse limit is a left exact functor we get an exact sequence
$0 \to  \varprojlim_n H_2(G_n, \Z) \to H_2(\widehat{G}, \z/ 2^s \Z) \to \widehat{G}^{ab}  
$. This completes the proof of the Claim.

\medskip
By the Claim and the paragraph before it  the map $H_2(G, \z/ q \Z) \to H_2(\widehat{G}, \Z / q \Z)$
is injective  and 
 by Proposition \ref{iff25} and the fact that $ \widehat{G}$ is the pro-$2$ completion of $G$  we obtain that  $G \wedge^q G$ is residually $2$-finite.

\end{proof}

 \section{Non-goodness} \label{section-nongood}

In \cite{Serre}  Serre 	 called  a discrete group $G$ good (we say (cohomologically) good)  if for the profinite completion $\widehat{G}$ of $G$ the canonical map $G \to \widehat{G}$ induces an isomorphism $$H^i(\widehat{G}, D) \to H^i(G, D)$$ for every finite $G$-module $D$ and every $i \geq 0$, where $H^i(\widehat{G}, D)$ denotes the continuous ( profinite)  cohomology.

By definition $G  \in \mathcal{A}_n$ if the above holds for all $i \leq n$.  By definition   $G  \in \mathcal{A}_n^{hom}$ if  the canonical map $G \to \widehat{G}$ induces an isomorphism $$H_i(G, D) \to H_i(\widehat{G}, D) $$ for every finite ${G}$-module $D$ and  $0 \leq i \leq n$.  
In \cite{BR-K} the authors  proved that homological and cohomological goodness are equivalent i.e   $G \in \mathcal{A}_n$ if and only if $\mathcal{A}_n^{hom}$.

As stated in the introduction the Grigorchuk group is not in $\mathcal{A}_2$. In the following result we show that many residually finite wreath products are not in $\mathcal{A}_2$.

Denote $\mathbb{F}_p = \Z/ p \Z$.

\begin{theorem}(= Theorem D)  Suppose that $B$ is an infinite,  residually finite group, $G = A \wr B$ is a countable, residually finite group  and  $A$  is   a non-trivial  group.  Then   at least one  of the following conditions holds:

a) $A \wr B \not\in {\mathcal{A}}_2$,

b)  $dim_{\mathbb{F}_p}  A  \otimes_{\Z} \mathbb{F}_p \leq 1$ for every prime $p> 2$, $A = 2A$ and $B$ has only finitely many elements of  order different from two.
\end{theorem}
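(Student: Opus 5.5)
The plan is to argue by contraposition: assume $G = A \wr B \in \mathcal{A}_2$ and deduce that b) holds. By \cite{BR-K}, $G \in \mathcal{A}_2$ is equivalent to $G \in \mathcal{A}_2^{hom}$. So for every prime $p$ the canonical map $H_2(G, \mathbb{F}_p) \to H_2(\widehat{G}, \mathbb{F}_p)$ is an isomorphism, taking $\mathbb{F}_p$ as a trivial finite module.

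The first step is a cardinality argument. Since $G$ is countable, $H_2(G, \mathbb{F}_p)$ is a countable $\mathbb{F}_p$-vector space. On the other hand, $H_2(\widehat{G}, \mathbb{F}_p) \simeq \varprojlim_i H_2(A_i \wr B_i, \mathbb{F}_p)$ by Corollary \ref{LL5} b), so it is a profinite abelian group, and a profinite group is either finite or uncountable. Hence goodness forces $H_2(G, \mathbb{F}_p)$ to be finite for every prime $p$. By Corollary \ref{T1Cor}, the summand $Z_p(A,B)/I_p(A,B) \simeq Z(D,B)/I(D,B)$ must then be finite, where $D = A^{ab} \otimes_{\Z} \mathbb{F}_p$.

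Next I would invoke Gruenberg's criterion \cite{Gru}: since $B$ is infinite and $G$ is residually finite, $A$ is abelian and residually finite. Then I would use the decomposition (\ref{final-eq}) from the proof of Theorem \ref{important}, with $D$ in place of $A$:
$$Z(D,B)/I(D,B) \simeq \Bigl(\oplus_{\{b,b^{-1}\},\, b\neq b^{-1}} (D\otimes D) b\Bigr) \oplus \Bigl(\oplus_{|b|=2} (D \square D) b\Bigr).$$
Suppose $D \neq 0$. Then $D \otimes D \neq 0$, so only finitely many pairs $\{b, b^{-1}\}$ with $b \neq b^{-1}$ can occur; that is, $B$ has only finitely many elements of order different from $2$. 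Since $B$ is infinite, $B$ then has infinitely many involutions, so $D \square D$ must vanish.

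The case split on $p$ follows.
\begin{itemize}
\item For $p = 2$: if $D \neq 0$, pick a line $\mathbb{F}_2 a \subseteq D$. The relation $a \otimes a + a \otimes a = 0$ is trivial in characteristic $2$, so $a \otimes a$ survives in $D \square D$ and $D \square D \neq 0$. This is a contradiction, so $A/2A = 0$, i.e.\ $A = 2A$.
\item For $p > 2$: $2$ is invertible, so $D \square D \simeq D \wedge D$. This vanishes precisely when $\dim_{\mathbb{F}_p} D \le 1$, which gives the dimension condition in b).
\end{itemize}

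Finally, to obtain the condition on $B$ in b), I need a prime $p$ with $A/pA \neq 0$. Here I would use that a nontrivial residually finite abelian group has a proper finite index subgroup $N$. Some prime $p$ then divides $|A/N|$, and $A/N$ surjects onto a nontrivial elementary abelian $p$-group, so $A/pA \neq 0$. By the $p = 2$ case this $p$ is odd, and the argument above shows that $B$ has only finitely many elements of order $\neq 2$.

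The step I expect to need the most care is the first one. One must check that the finite-versus-uncountable dichotomy really applies to the whole of $H_2(\widehat{G}, \mathbb{F}_p)$, which requires it to be a profinite group in the natural topology of the inverse limit. One must also check that finiteness of $H_2(G, \mathbb{F}_p)$ passes to the summand $Z_p(A,B)/I_p(A,B)$. The latter is immediate from the direct sum decomposition in Corollary \ref{T1Cor}, so the main thing to verify is the compactness of the inverse limit.
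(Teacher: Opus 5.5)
Your proof is correct, but it takes a different route from the paper's. The paper argues on the profinite side. For a prime $p$ with $A/pA\neq 0$ it embeds $\varprojlim_i V_i\otimes_{\Z}\mathbb{F}_p$, where $V_i=Z(A_i,B_i)/I(A_i,B_i)$, into $H_2(\widehat{G},\mathbb{F}_p)\simeq\varprojlim_i H_2(A_i\wr B_i,\mathbb{F}_p)$. It then counts pairs $\{b,b^{-1}\}$ and involutions in the finite quotients $B_i$ to show this limit is uncountable unless b) holds. At the end it uses residual finiteness of $B$ to transfer the bound on elements of order $\neq 2$ from the $B_i$ to $B$.

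You use only that $H_2(\widehat{G},\mathbb{F}_p)$ is an inverse limit of finite groups. Hence goodness together with countability forces $H_2(G,\mathbb{F}_p)$ to be finite. From then on everything is a computation in the discrete group, via Corollary \ref{T1Cor} and (\ref{final-eq}) with $D=A/pA$, counting directly in $B$. This spares you the surjectivity of the transition maps $V_{i+1}\otimes_{\Z}\mathbb{F}_p\to V_i\otimes_{\Z}\mathbb{F}_p$. It also spares you any compatibility of the splitting of Theorem \ref{T1} with the maps $A_{i+1}\wr B_{i+1}\to A_i\wr B_i$. That compatibility needs care: a nontrivial element of $B_{i+1}$ may become trivial in $B_i$, and then the corresponding summand of $V_{i+1}$ is carried into the $H_2(A_i,\Z)$ summand rather than into $V_i$. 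Your route also isolates the general principle that a countable group in $\mathcal{A}_2^{hom}$ has finite $H_2(G,\mathbb{F}_p)$. The paper's route, in return, exhibits an explicit uncountable piece of $H_2(\widehat{G},\mathbb{F}_p)$.

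One small point to tighten. For $p=2$, noting that the relation $a\otimes a+a\otimes a$ is trivial does not by itself show that $a\otimes a$ lies outside the span of all $x\otimes y+y\otimes x$. Say instead that in characteristic $2$ one has $D\square_{\Z}D\simeq S^2(D)\neq 0$. Alternatively, compose with a linear retraction $D\to\mathbb{F}_2a$ and use $\mathbb{F}_2\square_{\Z}\mathbb{F}_2\simeq\mathbb{F}_2$.
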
 

\begin{proof} 
Note that by Gruenberg criterion since $G$ is residually finite we have that $A$ is abelian and residually finite.

Let $p$ be a prime such that  $A/ p A \not= 0$  i.e.  $dim_{\mathbb{F}_p} (A / pA) \geq 1$.
By Lemma \ref{LL4} $\widehat{G} \simeq \varprojlim_i A_i \wr B_i$ with $\widehat{B} = \varprojlim_i B_i$ and  $\widehat{A} = \varprojlim_i A_i$ with all maps in the inverse limits surjective ones. The canonical map $Z(A_{i+1}, B_{i+1}) \to Z(A_i, B_i)$ is surjective, hence  for  $V_{i} = Z(A_{i}, B_{i}) / I(A_{i}, B_{i})$ the induced map  $V_{i+1} \otimes_{\Z} \mathbb{F}_p \to V_i \otimes_{\Z} \mathbb{F}_p  $ is surjective,  where $Z(-, -)$ and $I(-, -)$ were defined just before Theorem \ref{T1}. By Theorem \ref{T1}
$$H_2(A_i \wr B_i, \Z) \simeq H_2(B_i, \Z) \oplus H_2(A_i, \Z) \oplus ( Z(A_i, B_i) / I(A_i, B_i))$$
By  the universal coefficients theorem $H_2(  A_i \wr B_i ,  \Z)  \otimes_{\Z}  \mathbb{F}_p $ embeds in  $H_2(  A_i \wr B_i , \mathbb{F}_p) $ 
and $H_2(  A_i \wr B_i ,  \Z)   \otimes_{\Z}  \mathbb{F}_p$ contains a copy of $V_i  \otimes_{\Z}  \mathbb{F}_p$. Hence $\varprojlim_iV_i  \otimes_{\Z}  \mathbb{F}_p$ embeds in $ \varprojlim_i H_2(  A_i \wr B_i , \mathbb{F}_p) \simeq H_2( \widehat{G}, \mathbb{F}_p)$.
  Note that  by (\ref{final-eq})
 $$V_i    \otimes_{\Z}  \mathbb{F}_p \simeq  \oplus_{ \{b, b^{-1} \} \subseteq B_i \setminus \{ 1 \},  |b| \not= 2 }  ( (A_i    \otimes_{\Z}  A_i)    \otimes_{\Z}  \mathbb{F}_p )
 \oplus ( \oplus_{ b \in B_i \setminus \{ 1 \}, |b| = 2 }  (A_i    \square_{\Z}  A_i)    \otimes_{\Z}  \mathbb{F}_p )$$ 
 Then
  for $dim = dim_{\mathbb{F}_p}$ $$dim(V_i    \otimes_{\Z}  \mathbb{F}_p) \geq dim( (A_i    \otimes_{\Z}  A_i)    \otimes_{\Z}  \mathbb{F}_p ) | \{  \{b, b^{-1} \} \subseteq  B_i \setminus \{ 1 \} \ | \ |b| \not= 2 \} | + $$ $$dim( (A_i  \square_{\Z} A_i)    \otimes_{\Z} \mathbb{F}_p ) | \{ b \in B_i \setminus \{ 1 \} \ | \ |b| = 2 \} |$$
 If $dim (A/ p A) \geq 2$  if $p>2$ or $dim (A/ 2 A) \geq 1$ if $p = 2$  then
 $$dim(V_i    \otimes_{\Z}  \mathbb{F}_p) \geq | \{   \{b, b^{-1} \} \subseteq   B_i \setminus \{ 1 \} \ | \ |b| \not= 2 \} | +  | \{ b \in B_i \setminus \{ 1 \} \ | \ |b| = 2 \} | \geq \frac{ | B_i | - 1}{2}$$
 Since $B$ is infinite and $B =  \varprojlim_i B_i$ we conclude that $\lim_{i\to \infty} | B_i | = \infty$. Hence $\varprojlim_i V_i    \otimes_{\Z}  \mathbb{F}_p$ is not countable, and so $H_2( \widehat{G}, \mathbb{F}_p)$ is not countable.
 
 On the other hand  $H_2(G, \mathbb{F}_p)$ is countable. Indeed if $G \simeq F/ R$ is a countable group,  where $F$ is a countable free group   and  $H_2(G, \mathbb{F}_p) \simeq R \cap ([F,F]F^p) / [F, R] R^p$ is countable since it is a section of $F$ and $F$ is countable. Then the map $H_2(G, \mathbb{F}_p) \to H_2 ( \widehat{G}, \mathbb{F}_p)$ is not an isomorphism, hence $G \not\in \mathcal{A}_2^{hom} = \mathcal{A}_2$.

It remains to consider the case when  $dim (A/ p A) \leq 1$ for all $p >2$ and $A = 2A$. Since $A$ is residually finite and abelian  $\cap_{n \geq 2}  n A$ is the trivial group and since $A$ is non-trivial there is some $p > 2$ such that $A/ p A $ is non-trivial, hence  $dim (A/ p A) = 1$. 
 Then $dim( (A_i    \otimes_{\Z}  A_i)    \otimes_{\Z}  \mathbb{F}_p )  = 1$, hence
$$dim(V_i    \otimes_{\Z}  \mathbb{F}_p) \geq | \{    \{b, b^{-1} \} \subseteq   B_i \setminus \{ 1 \} \ | \ |b| \not= 2 \} |$$
If $\lim_i | \{   \{b, b^{-1} \} \subseteq   B_i \setminus \{ 1 \} \ | \ |b| \not= 2 \} | = \infty$ we finish as in the previous case. Suppose this limit is $k$.

Since $\widehat{B} = \varprojlim_i B_i$ where all maps in the inverse limit are surjective and  the canonical map $B \to \widehat{B}$ is injective, we get that  for $b \in B \setminus \{ 1 \}$ if $|b| \not= 2$ then there is some $i$ such that the image of $b$ in $B_i$ has order different from 2. Hence $B$ is an infinite group  where at most  $2k$ elements have  order different from two.
\end{proof}

\end{document}